\documentclass[a4paper,twoside,12pt]{article}
\usepackage[utf8x]{inputenc}		%Pour de l'UTF8
\usepackage[english]{babel}

\usepackage{tikz,tkz-tab}
	\usetikzlibrary{shapes}
	
\usepackage{amscd}
\usepackage{amsmath,amsthm,amssymb,amsfonts,pdfpages}
\usepackage{version}

\usepackage[colorlinks=true,citecolor=blue,linkcolor=blue]{hyperref}
\usepackage{stmaryrd}				%Crochets pour les intervalles entiers
\usepackage{bbm}					%Fonction indicatrice
\usepackage{version}
\usepackage{geometry}
\geometry{hmargin=2.3cm,vmargin=3.9cm}
\usepackage{multirow}
\usepackage{array}
\usepackage{authblk}
	
\setlength\parindent{0pt}

\input{commath}

\newcommand{\egloi}{\overset{(d)}{=}}

\DeclareMathOperator*{\esssup}{ess\,sup}

\theoremstyle{plain}
\newtheorem{thm}{Theorem}
\newtheorem{cor}[thm]{Corollary}
\newtheorem{lem}[thm]{Lemma}
\newtheorem{prop}[thm]{Proposition}

\title{Deviation results for Mandelbrot's multiplicative cascades with exponential tails}

\author[1,2]{Thierry Klein}
\author[1,3]{Agn\`es Lagnoux}
\author[1,4]{Pierre Petit}

\affil[1]{Institut de Math\'ematiques de Toulouse; UMR5219. Universit\'e de Toulouse; CNRS.}
\affil[2]{ENAC - Ecole Nationale de l'Aviation Civile, Universit\'e de Toulouse, France. E-mail: thierry.klein@math.univ-toulouse.fr}
\affil[3]{UT2J, F-31058 Toulouse, France. E-mail: lagnoux@math.univ-tlse2.fr (corresponding author)}
\affil[4]{UT3, F-31062 Toulouse, France. E-mail: pierre.petit@math.univ-toulouse.fr}

\begin{document}

\maketitle

\begin{abstract}
\noindent Let $W$ be a nonnegative random variable with expectation $1$. For all $r \geqslant 2$, we consider the total mass $Z_r^\infty$ of the associated Mandelbrot multiplicative cascade in the $r$-ary tree. For all $n \geqslant 1$, we also consider the total mass $Z_r^n$ of the measure at height $n$ in the $r$-ary tree. Liu, Rio, Rouault \cite{lrr,liu2000limit,Rouault04} established large deviation results for $(Z_r^n)_{r \geqslant 2}$ for all $n \in \intervallentfo{1}{\infty}$ (resp.\ for $n = \infty$) in case $W$ has an everywhere finite cumulant generating function $\Lambda_W$ (resp.\ $W$ is bounded). Here, we extend these results to the case where $\Lambda_W$ is only finite on a neighborhood of zero. And we establish all deviation results (moderate, large, and very large deviations). It is noticeable that we obtain nonconvex rate functions. Moreover, our proof of upper bounds of deviations for $(Z_r^\infty)_{r \geqslant 2}$ rely on the moment bound instead of the standard Chernoff bound.\\

\noindent \textbf{Keywords:} Mandelbrot multiplicative cascades ; large deviation principles ; heavy tails ; moment bound ; combinatorial problems.\\

\noindent \textbf{AMS MSC 2010:} 60F10; 60F05; 62E20; 60C05; 68W40.
\end{abstract}

\tableofcontents

% \alert{
% \begin{itemize}
% \item RAJOUTER $I^\infty(a) \sim c e \log(a)$. Pour la majoration, on a besoin de $I^1(w) = \Lambda_W^*(w) \leqslant c w$, puis, par récurrence, $I^n(a) \leqslant c n a^{1/n}$. La minoration est obtenue par Markov.
% \\
% By item (ii) of Proposition \ref{prop:upper_bound_large_moment}, for $\eta \in \intervalleoo{0}{e}$,
% \begin{align*}
% \limsup \frac{1}{r}\log \Prob(Z_r\geqslant a) \leqslant \limsup \frac 1r \log \Espe[Z_r^{\eta r}]-\eta \log a\leqslant  c(\eta)-\eta \log a. 
% %\sim -\eta \log a
% \end{align*}
% %as $\eta \to 0$. 
% In particular, 
% \[
% \liminf_{a \to \infty} \frac{\overline{I}^{\infty}(a)}{\log(a)} \geqslant e.
% \]
% %
% \item RÉORGANISER la section 3.1 (propriétés de $I^n$).
% \item voir si on fait une Proposition 17 générique avec $\varepsilon$ et une fonction générique. Idem pour continuité de $I^n$ et $\overline{I}^\infty$
% \item vérifier parties entières si besoin.
% \item Notation $\Lambda_W$ ou $\Lambda$ ?
% \item Contraction principle : unilateral ou non ?
% \item Reprendre tous les énoncés pour $c = \infty$. Notamment Propositions \ref{prop:moments_W}.
% \item VÉRIFIER qu'on n'utilise pas :
% \begin{itemize}
% \item $\Lambda_W^*(w) \leqslant c w$ (voir la preuve de Proposition \ref{prop:Lambda}, item 2.) ;
% \item $\Lambda_W^*(w) \leqslant - \log\Prob(W \geqslant w)$ (par Markov exponentiel cf. proof of Proposition \ref{prop:Lambda}, item 1).
% \end{itemize}
% \end{itemize}
% }

\section{Introduction}

Multiplicative cascades were introduced by Mandelbrot in \cite{man74a,man74b} in order to analyze some problems of turbulence precisely. For $r \in \N^*\setminus \{1\}$, a multiplicative cascade is a random measure $\mu_r^\infty$ on the unit interval, defined as a limit of measures $\mu_r^n$ on the sub-$\sigma$-algebra generated by the $r$-adic intervals of level $n \in \N^*$. It is natural to study $Z_r^n \defeq \mu_r^n(\intervalleff{0}{1})$ the total mass of the measure $\mu_r^n$. It can be described as follows. Let $W$ be a nonnegative random variable such that $\Espe[W]=1$. Now, let $(W_{i_1,\dots, i_n})_{n \geqslant 1, 1 \leqslant i_1, \dots, i_n\leqslant r}$ be a family of independent and identically distributed random variables distributed as $W$ indexed by all finite sequences of integers between $1$ and $r$. 
Then we define, for all $n \in \N^*$,
\begin{align}
Z_r^n
 & \defeq \frac{1}{r^n} \sum_{ 1\leqslant i_1, \dots, i_n\leqslant r} 
W_{i_1} W_{i_1,i_2} \cdots W_{i_1,\dots, i_n} \label{eq:defZrnintro}  %\\
% & = \frac{1}{r} \sum_{i=1}^r W_{i} Z_{r,i}^{n-1} \label{eq:Efiniintro}
\end{align}
and we set $Z_r^0 \defeq 1$.

For fixed $r$, the properties of $Z_r^n$ were studied in several works. First, Kahane and Peyri\`ere \cite{kahane1976} showed that $(Z_r^n)_{n \geqslant 1}$ is a nonnegative martingale with expectation $1$ and  that  the properties of the limit $Z_r^{\infty}$ were characterized by the behavior of the moments of $W$ and the quantity $\Espe[W\log(W)]$. In particular, $\Espe[Z_r^\infty] \leqslant 1$ and it is proved in \cite[Théorème 1]{kahane1976} that $\Espe[Z_r^\infty] = 1$ if and only if $\Espe[W \log(W)] < \log(r)$. Moreover the distribution of $Z_r^{\infty}$ is solution of the distributional equation 
\begin{align}
Z \egloi \frac{1}{r} \sum_{i=1}^r W_{i} Z_i \label{eq:E_infinite}
\end{align}
where the $Z_i$'s are independent copies of $Z$ and are independent of $(W_i)_{1\leqslant i\leqslant r}$. This equation has been studied by Durrett and Liggett in \cite{durrett1983fixed} and by Guivarc'h in \cite{guivarc1990extension}, and is closely related to implicit renewal theory (see \cite{Goldie1991ImplicitRenewal}). Besides, multifractal dimensions of $\mu_r^\infty$ were studied by Holley and Waymire in \cite{holley1992multifractal} and by Barral \cite{barral1999moments}.
 
Now for $n\in \N^* \cup \{\infty\}$, the asymptotic behavior of $(Z_r^n)_{r\geqslant 2}$ when $r$ goes to infinity was studied by Liu, Rio, and  Rouault in \cite{lrr,liu2000limit,Rouault04}. In \cite{liu2000limit}, the authors studied $(Z_r^{\infty})_{r\geqslant 2}$ and obtained the law of large numbers and the central limit theorem under minimal assumptions. They also provided large deviation results under strong assumptions on the tail of the variable $W$. 
Let
\begin{equation} \label{eq:Lambda_W}
\Lambda_W(t) \defeq \log \Espe[e^{tW}] \in \intervalleof{-\infty}{\infty}
\end{equation}
be the log-Laplace transform of $W$, and let
\begin{equation} \label{eq:Lambda_W_star}
\Lambda_W^*(x) \defeq \sup_{t \in \R} [t x - \Lambda_W(t)] \in \intervalleff{0}{\infty}
\end{equation}
be the Fenchel-Legendre transform of $\Lambda_W$. 
The following result is proved in \cite{lrr,liu2000limit}.

\begin{thm}[light-tailed -- large deviations] \label{th:LDlight}
Let $n \in \N^* \cup \{\infty\}$. Assume that one of the two following statements is true:
\begin{enumerate}
\item \label{th:LDlight_finite} (finite tree) $n \in \N^*$ and $\Lambda_W < \infty$ everywhere;
\item \label{th:LDlight_infinite} (infinite tree) $n = \infty$ and $\esssup(W) < \infty$.
\end{enumerate}
Then, the sequence $(Z_r^n)_{r \geqslant 2}$ satisfies a large deviation principle at speed $(r)_{r \geqslant 2}$ with rate function $\Lambda_W^*$.
\end{thm}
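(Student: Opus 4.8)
The plan is to obtain both cases from a single application of the Gärtner--Ellis theorem, fed by the branching structure of the cascade. First I would record the recursive identity obtained by splitting \eqref{eq:defZrnintro} at the root: for $n \in \N^*$,
\begin{equation*}
Z_r^n \egloi \frac1r \sum_{i=1}^r W_i \, \widetilde Z_i ,
\end{equation*}
where $\widetilde Z_1, \dots, \widetilde Z_r$ are i.i.d.\ copies of $Z_r^{n-1}$, independent of $(W_i)_{1 \leqslant i \leqslant r}$; for $n = \infty$ the same identity holds with $\widetilde Z_i$ i.i.d.\ copies of $Z_r^\infty$, this time \emph{because of} the fixed-point equation \eqref{eq:E_infinite}. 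Taking exponential moments and using independence then gives, in both cases and for every $\lambda \in \R$,
\begin{equation} \label{eq:plan-GE}
\frac1r \log \Espe\!\left[ e^{\lambda r Z_r^n} \right] = \log \Espe\!\left[ e^{\lambda W \widetilde Z} \right] ,
\end{equation}
with $W$ and $\widetilde Z$ independent on the right-hand side and $\widetilde Z$ distributed as $Z_r^{n-1}$ (resp.\ $Z_r^\infty$). The whole argument then reduces to showing that the right-hand side of \eqref{eq:plan-GE} converges to $\Lambda_W(\lambda)$ as $r \to \infty$, after which Gärtner--Ellis delivers the large deviation principle.

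To prove that convergence I would combine two ingredients. The first is a law of large numbers, $\widetilde Z \to 1$ in probability as $r \to \infty$: indeed $\Espe[Z_r^n] = 1$ when $n$ is finite, and also for $n = \infty$ as soon as $r$ is large (since $\Espe[W \log W] < \infty$ here, so $\Espe[W \log W] < \log r$ eventually), while $\Var(Z_r^n) = O(1/r)$ because $\Lambda_W < \infty$ everywhere, resp.\ $W$ bounded, forces $\Espe[W^2] < \infty$. The second is a uniform-in-$r$ exponential integrability bound that lets one pass to the limit inside $\Espe[e^{\lambda W \widetilde Z}]$; for $\lambda \leqslant 0$ it is trivial since $0 \leqslant e^{\lambda W \widetilde Z} \leqslant 1$, so the real work is for $\lambda > 0$. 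In the finite-tree case I would obtain it by a careful induction on the depth: writing $\Lambda_r^m(t) \defeq \log\Espe[e^{t Z_r^m}]$, the branching recursion gives $\Lambda_r^m(t) = r \log \Espe_W[ e^{\Lambda_r^{m-1}(t W/r)} ]$ with $\Lambda_r^0(t) = t$, and a second-order Taylor expansion at the origin, using $(\Lambda_r^m)'(0) = \Espe[Z_r^m] = 1$ and $(\Lambda_r^m)''(0) = \Var(Z_r^m) = O(1/r)$, shows inductively that $\Lambda_r^m(s) \to s$ for every $s$, with the error small enough (because $\Espe[e^{c W}] < \infty$ for all $c$) to pass to the limit in $\log\Espe_W[ e^{\Lambda_r^{n-1}(\lambda W)} ]$ and recover $\Lambda_W(\lambda)$. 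In the infinite-tree case I would instead invoke the fact that, when $\esssup(W) = M < \infty$, the family $(Z_r^\infty)_{r \geqslant r_0}$ has uniformly light right tails, so that $\{ e^{\lambda M Z_r^\infty} \}_{r \geqslant r_0}$ is uniformly integrable, which is a uniform version of the classical exponential-moment estimates for the smoothing transform \eqref{eq:E_infinite} and genuinely relies on the boundedness of $W$. Combined with the law of large numbers and dominated convergence, either route gives $\log \Espe[e^{\lambda W \widetilde Z}] \to \Lambda_W(\lambda)$ for every $\lambda \in \R$.

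To conclude, I would check the regularity needed by Gärtner--Ellis: in both cases $\Lambda_W$ is finite on all of $\R$ (by hypothesis, resp.\ because $W$ is bounded), hence convex, lower semicontinuous and $C^\infty$ there, in particular essentially smooth with $0$ in the interior of its effective domain. By \eqref{eq:plan-GE} and the previous step, $\lim_{r \to \infty} \frac1r \log \Espe[e^{\lambda r Z_r^n}] = \Lambda_W(\lambda)$ for every $\lambda$, so the Gärtner--Ellis theorem applies and yields that $(Z_r^n)_{r \geqslant 2}$ satisfies a large deviation principle at speed $(r)_{r \geqslant 2}$ with good rate function $\Lambda_W^{**}$, which equals $\Lambda_W^*$ since $\Lambda_W$ is already convex and lower semicontinuous.

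The step I expect to be the real obstacle is the uniform-in-$r$ exponential control of the right tail of $\widetilde Z$. In the finite-tree case the delicate point is to propagate the Taylor estimate through the finitely many levels of the recursion while keeping the error terms integrable against $W$, using $\Lambda_W < \infty$ everywhere at each level. In the infinite-tree case the delicate point is the uniform exponential-moment bound for the solutions of the smoothing transform \eqref{eq:E_infinite}: here boundedness of $W$ is essential, since for unbounded $W$ the total mass $Z_r^\infty$ may develop heavy (even power-law) tails, and dealing with that regime is precisely the subject of the rest of the paper.
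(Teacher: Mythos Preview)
Your approach via Gärtner--Ellis is sound for case~\ref{th:LDlight_infinite} (infinite tree, $W$ bounded) and is essentially what the cited reference \cite{liu2000limit} does, but it has a genuine gap in case~\ref{th:LDlight_finite} (finite tree, $\Lambda_W < \infty$ everywhere): the Laplace transform $\Espe[e^{\lambda r Z_r^n}]$ can be $+\infty$ for every $\lambda > 0$ and every $r$, so the right-hand side of your identity $\frac{1}{r}\log\Espe[e^{\lambda r Z_r^n}] = \log\Espe[e^{\lambda W \widetilde Z}]$ is $+\infty$ and Gärtner--Ellis yields nothing on the right tail. Concretely, take any nonnegative $W$ with mean $1$ and $-\log\Prob(W \geqslant w) \sim w^\tau$ for some $\tau \in (1,2)$; then $\Lambda_W < \infty$ everywhere, $\Lambda_W^*(w) \sim w^\tau$ and, by Legendre duality, $\Lambda_W(s)$ grows like $s^{\tau/(\tau-1)}$ with $\tau/(\tau-1) > 2$. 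In your recursion $\Lambda_r^2(t) = r \log \Espe_W\bigl[e^{r \Lambda_W(tW/r^2)}\bigr]$, the exponent behaves for large $W$ like a constant times $t^{\tau/(\tau-1)} W^{\tau/(\tau-1)}/r^{(\tau+1)/(\tau-1)}$, and since $\tau/(\tau-1) > \tau$ this beats the tail $e^{-W^\tau}$, so the expectation diverges for every $t > 0$ and every $r$. Your inductive Taylor argument cannot rescue this: a second-order expansion of $\Lambda_r^m$ near $0$ says nothing once $\Lambda_r^m \equiv +\infty$ on $(0, \infty)$.

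This is exactly why the paper flags that ``$\Espe[e^{t Z_r^n}]$ may be infinite for all $t > 0$'' even when $\Lambda_W < \infty$, and why \cite{lrr} handles case~\ref{th:LDlight_finite} not by Gärtner--Ellis but by \emph{truncation and exponential approximation}: one replaces $W$ by $W \wedge M$ (so all $Z_r^n$ become bounded and Gärtner--Ellis applies with rate $\Lambda_{W \wedge M}^*$), shows that the truncated and untruncated cascades are exponentially equivalent, and then lets $M \to \infty$. The present paper gives yet another route (Theorem~\ref{th:finite}, which encompasses case~\ref{th:LDlight_finite}): decompose the right-deviation event according to how many subtrees $Z_{r,i}^{n-1}$ exceed $1+\varepsilon$, and control each piece by the contraction principle and induction on $n$. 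Neither approach requires finiteness of $\Espe[e^{t Z_r^n}]$ for $t > 0$.
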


It is worth noticing that, for unbounded random variables $W$, even when the Laplace transform was finite everywhere, the large deviation principle for $n=\infty$ (infinite tree) was unknown.

In view of Lemma \ref{lem:dev2ldp} below, Theorem \ref{th:LDlight} follows from the estimation of left and right deviations. Since the variables $Z_r^n$ are nonnegative, left deviations easily follow from Gärtner-Ellis theorem in great generality, as shown in \cite{liu2000limit}.

\begin{prop}[left large deviations] \label{prop:leftLD}
Let $n \in \N^* \cup \{\infty\}$. If $n = \infty$, we assume that $\Espe[W \log(W)] < \infty$. For all $a \in \intervalleff{0}{1}$,
\[
\frac{1}{r} \log \Prob(Z_r^n \leqslant a)  \xrightarrow[r \to \infty]{} -\Lambda_W^*(a) .
\]
\end{prop}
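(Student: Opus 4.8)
The plan is to deduce the statement from exponential tilting (the Gärtner--Ellis machinery), the only substantial input being the limiting normalized cumulant generating function of $Z_r^n$ on $\intervalleof{-\infty}{0}$. The left tail is easy precisely because, since $W \geqslant 0$, we have $e^{tW} \leqslant 1$ for every $t \leqslant 0$, so $\Lambda_W$ is finite, nonpositive, continuous and convex on $\intervalleof{-\infty}{0}$, no matter how heavy the right tail of $W$ is; in particular no extra integrability hypothesis is needed for finite $n$. Thus the real task is to prove
\[
\lim_{r \to \infty} \frac{1}{r} \log \Espe\bigl[ e^{t r Z_r^n} \bigr] = \Lambda_W(t), \qquad t \leqslant 0 .
\]
Granting this, the upper bound is the exponential Markov inequality: for $a \in \intervalleff{0}{1}$ and $t \leqslant 0$, $\Prob(Z_r^n \leqslant a) \leqslant e^{-tra}\,\Espe[e^{t r Z_r^n}]$, so $\limsup_{r \to \infty} \frac{1}{r} \log \Prob(Z_r^n \leqslant a) \leqslant \inf_{t \leqslant 0}( \Lambda_W(t) - ta ) = -\Lambda_W^*(a)$, the last identity holding because $\Lambda_W$ is convex with $\Lambda_W(0) = 0$ and $\Lambda_W'(0) = \Espe[W] = 1 \geqslant a$, so the supremum in \eqref{eq:Lambda_W_star} is attained at a nonpositive $t$. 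The matching lower bound is the usual change-of-measure argument: tilt the law of $Z_r^n$ by $e^{t_\delta r Z_r^n}$ with $t_\delta \leqslant 0$ chosen so that the tilted mean of $Z_r^n$ equals $a - \delta$, observe that under the tilted measure $Z_r^n$ retains an i.i.d.\ branching structure and hence still obeys a law of large numbers, deduce $\liminf_{r\to\infty} \frac{1}{r} \log \Prob(Z_r^n \in (a - 2\delta, a)) \geqslant -\Lambda_W^*(a - \delta)$, and let $\delta \to 0$ using the continuity of $\Lambda_W^*$ on the interior of its domain; again only the values of $\Lambda_W$ on $\intervalleof{-\infty}{0}$ enter.

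The displayed limit is obtained from the branching recursion. For $n \in \N^*$, \eqref{eq:defZrnintro} gives the distributional identity $Z_r^n \egloi \frac{1}{r} \sum_{i=1}^r W_i Z_{r,i}^{n-1}$ with $Z_{r,1}^{n-1}, \dots, Z_{r,r}^{n-1}$ i.i.d.\ copies of $Z_r^{n-1}$ independent of $(W_i)_{1 \leqslant i \leqslant r}$, and for $n = \infty$ the same identity holds with $Z_r^\infty$ everywhere, by \eqref{eq:E_infinite}. Taking expectations, using independence, and conditioning on $Z_r^{n-1}$ (legitimate since $t Z_r^{n-1} \leqslant 0$ and $\Lambda_W$ is finite on $\intervalleof{-\infty}{0}$),
\[
\Espe\bigl[ e^{t r Z_r^n} \bigr] = \Espe\bigl[ e^{t W Z_r^{n-1}} \bigr]^{r} = \exp\Bigl( r \log \Espe\bigl[ e^{\Lambda_W(t Z_r^{n-1})} \bigr] \Bigr),
\]
so $\frac{1}{r} \log \Espe[ e^{t r Z_r^n} ] = \log \Espe[ e^{\Lambda_W(t Z_r^{n-1})} ]$. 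Since $0 \leqslant e^{\Lambda_W(t Z_r^{n-1})} \leqslant 1$ and, as recalled below, $Z_r^{n-1} \to 1$ in probability as $r \to \infty$, bounded convergence together with the continuity of $\Lambda_W$ at $t$ gives $\Espe[ e^{\Lambda_W(t Z_r^{n-1})} ] \to e^{\Lambda_W(t)}$, which is the desired limit.

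What is left — and where I expect the only genuine obstacle to be — is the input $Z_r^n \to 1$ in probability as $r \to \infty$ for each $n \in \N^* \cup \{\infty\}$. For $n = \infty$ this is the $r \to \infty$ law of large numbers of \cite{liu2000limit}; it is the unique place where the hypothesis $\Espe[W \log W] < \infty$ is used, serving to ensure that $\Espe[Z_r^\infty] = 1$ for $r$ large and underlying that argument. For finite $n$ one argues by induction: $n = 1$ is the ordinary weak law for the i.i.d.\ sum \eqref{eq:defZrnintro}, and the step writes $Z_r^n - 1 = \frac{1}{r} \sum_{i=1}^r ( W_i Z_{r,i}^{n-1} - 1 )$ as a normalized sum of $r$ i.i.d.\ centered terms distributed as $W Z_r^{n-1} - 1$ ($W$ independent of $Z_r^{n-1}$) and invokes the weak law of large numbers for triangular arrays, the needed uniform integrability propagating from level $n - 1$ — or, under the running assumption that $\Lambda_W$ is finite near $0$, one just checks $\Var(Z_r^n) = O(1/r)$ by the same recursion. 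Finally, the two endpoints of $\intervalleff{0}{1}$ require a separate word: at $a = 1$ the scheme above still works (tilt at $1 - \delta$, let $\delta \to 0$, use $\Lambda_W^*(1) = 0$); at $a = 0$, either $\Prob(W = 0) = 0$, in which case $\Prob(Z_r^n \leqslant 0) = 0$ and $\Lambda_W^*(0) = +\infty$ and only the trivial upper bound is needed, or $\Prob(W = 0) = q > 0$, in which case $\Lambda_W^*(0) = -\log q$ and the lower bound follows from $\Prob(Z_r^n = 0) \geqslant \Prob(W_1 = \dots = W_r = 0) = q^r$, valid for $n = \infty$ too via \eqref{eq:E_infinite}.
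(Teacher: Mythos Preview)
Your proposal is correct and matches the paper's approach: the paper does not prove Proposition~\ref{prop:leftLD} in detail but simply says that ``left deviations easily follow from G\"artner--Ellis theorem in great generality, as shown in \cite{liu2000limit}'', and what you have written is precisely a fleshed-out version of that argument --- compute the limiting normalized cumulant generating function on $\intervalleof{-\infty}{0}$ via the branching recursion plus bounded convergence, then invoke the unilateral G\"artner--Ellis machinery.

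One small wording point: the phrase ``under the tilted measure $Z_r^n$ retains an i.i.d.\ branching structure'' is a bit loose. Tilting by $e^{t_\delta r Z_r^n}$ does keep the $r$ summands $W_i Z_{r,i}^{n-1}$ i.i.d.\ (the density factors over $i$), which is all you need for a law of large numbers, but it couples $W_i$ and $Z_{r,i}^{n-1}$ within each summand, so the full branching/product structure is not preserved. This is harmless for your purposes --- and in any case the concentration of the tilted law is exactly what the standard G\"artner--Ellis lower bound supplies from the differentiability of $\Lambda_W$ on $\intervalleoo{-\infty}{0}$, so you could equally well just cite that rather than argue via the preserved structure.
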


However, the study of right deviations is more delicate. The case $\esssup(W) < \infty$ is the case where the variables $Z_r^n$ are bounded above (by $\esssup(W)$) and Gärtner-Ellis theorem applies. However, if $\esssup(W) = \infty$, even if $\Lambda_W < \infty$, $\Espe[e^{t Z_r^n}]$ may be infinite for all $t > 0$ and Gärtner-Ellis theorem cannot provide the result in that case.

In this work, we extend these results to the case where $\Lambda_W$ might only be finite on $\intervalleoo{-\infty}{t_0}$ or $\intervalleof{-\infty}{t_0}$ for some $t_0\in \intervalleof{0}{\infty}$ and we obtain,  for both finite and infinite trees,  moderate, large, and very large deviation principles (see Theorems \ref{th:finite} and \ref{th:infinite}). 
In \cite{lrr}, the key argument to obtain the large deviation result in the finite tree and with $\Lambda_W < \infty$ everywhere (Theorem \ref{th:LDlight} with assumption 1)\ is to truncate the variable and use exponential approximation. In that case, the rate function is $\Lambda_W^*$: the large deviations are the same as those of the first level of the tree. 
In the case where $\Lambda_W(t) = \infty$ for $t\geqslant t_0$, the rate function is not $\Lambda_W^*$ as soon as $n\in (\N^*\setminus \{1\}) \cup \{\infty\}$, so this argument seems hopeless. Moreover, the Laplace transforms of the random variables $Z_r^n$ are infinite on $\intervalleoo{0}{\infty}$, hence the exponential Markov's inequality cannot either be used here to prove the upper bounds. Note that, in the case where $\Lambda_W < \infty$ everywhere, we do not know if the truncation argument works. Here, we use different techniques to bypass these problems. In the case of the finite tree, we decompose the event of deviations and control each term; this technique is standard to obtain upper bounds of large deviations for heavy-tailed random variables (see e.g., \cite{Nagaev69-1,Nagaev69-2}). In the case of the infinite tree, we bound the moments of $Z_r^\infty$. For moderate and very large deviations, the upper bound follows immediately from the moment Markov's inequality. As for large deviations, we prove that (see Corollary \ref{cor:moments})
\[
\limsup_{r \to \infty} \frac{1}{r} \log \Espe[(Z_r^\infty)^{\eta r}] = O(\eta^2)
\]
which is the key argument to prove that the rate function is non degenerated and is indeed the limit of the rate functions in finite trees (compare items \ref{th:finiteLD} of Theorems \ref{th:finite} and \ref{th:infinite}).

The paper is organized as follows. The main Theorems for both finite and infinite trees are presented in Section \ref{sec:main}. Section \ref{sec:prooffinite} is dedicated to the proof of the large deviation results for finite trees whereas the proofs for the infinite tree are given in Section \ref{sec:proofinfinite}.

\section{Main results}\label{sec:main}

%\subsection{Specific setting of this article}

From now on, it is always assumed that there exists $c \in \intervalleof{0}{\infty}$ such that
\begin{equation} \label{eq:tail_W}
\frac{1}{w} \log \Prob(W \geqslant w)  \xrightarrow[w\to\infty]{} - c .
\end{equation}

%\begin{itemize}
%\item This assumption entails that $W$ is not a constant.
Since we want to obtain large deviation principles, it is natural from the proofs to assume that $\log \Prob(W \geqslant rw)/r$ converges for all $w \geqslant 0$. Then the limit $\psi(w)$ is either linear in $w$ or infinite, since $\psi(w) = w \psi(1)$ for all nonnegative rational number $w$ and $\psi$ is nonincreasing.

Remark that, by Proposition \ref{prop:Lambda}, item \ref{prop:Lambda_dom} below, $c = \infty$ if and only if $\Lambda_W < \infty$ everywhere. If $c \in \intervalleoo{0}{\infty}$, then $\Lambda_W$ is finite on a neighborhood of zero. Examples where $c \in \intervalleoo{0}{\infty}$ include the cases where the law of $W$ is the exponential distribution of mean $1$ ($c = 1$), any gamma distribution of mean $1$ for instance.

%The case $c = 0$ is the case when $\Lambda_W(t) = +\infty$ for all $t > 0$ ?????

\begin{thm}[finite tree] \leavevmode \label{th:finite}

\begin{enumerate}
\item \label{th:finiteMD} \emph{Moderate deviations.}
For all $\alpha \in \intervalleoo{0}{1/2}$, for all $n \in \N^*$, the sequence $(r^\alpha (Z_r^n - 1))_{r \geqslant 2}$ satisfies a large deviation principle at speed $(r^{1-2\alpha})_{r \geqslant 2}$ with rate function
\[
J(a) \defeq \frac{a^2}{2 \Var(W)} .
\]

\item \label{th:finiteLD} \emph{Large deviations.}
For any $n \in \N^*$, the sequence $(Z_r^n)_{r\geqslant 2}$ satisfies a large deviation principle at speed $(r)_{r \geqslant 2}$ with rate function $I^n$ defined by induction by $I^1=\Lambda_W^*$ and, for all $a \in \R$,
\begin{align}\label{eq:Inrecurrence}
I^{n}(a)
 \defeq \inf \ensavec{cw + I^{n-1}(z)+\Lambda_W^*(s)}{w \geqslant 0,\ z \geqslant 0,\ s \geqslant 0,\ wz + s = a} .
\end{align}
%\begin{align}\label{eq:Inrecurrence}
%I^{n}(a)
% \defeq \begin{cases}
% \Lambda_W^*(a) & \text{if $a \leqslant 1$} \\
% \inf \ensavec{cw + I^{n-1}(z)+\Lambda_W^*(s)}{w \geqslant 0,\ z \geqslant 0,\ s \geqslant 0,\ wz + s \geqslant a} & \text{otherwise.}
%\end{cases}
%\end{align}

\item \label{th:finiteVL} \emph{Very large deviations.}
For all $\alpha >0$, for all $n \in \N^*$, the sequence $(r^{-\alpha} Z_r^n)_{r \geqslant 2}$ satisfies a large deviation principle at speed $(r^{1+\alpha/n})_{r \geqslant 2}$ with rate function $a \mapsto \infty \indic_{a < 0} + c n  a^{1/n} \indic_{a \geqslant 0}$.
\end{enumerate}
\end{thm}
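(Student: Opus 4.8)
The plan is to prove the three parts of Theorem~\ref{th:finite} by induction on the height $n$ of the tree, exploiting the self-similar recursive structure
\[
Z_r^{n} = \frac{1}{r} \sum_{i=1}^r W_i \, Z_{r,i}^{n-1},
\]
where the $Z_{r,i}^{n-1}$ are i.i.d.\ copies of $Z_r^{n-1}$, independent of $(W_i)_{1 \leqslant i \leqslant r}$. In each case the base case $n=1$ reduces to a classical deviation statement for the empirical mean $\frac{1}{r}\sum_{i=1}^r W_i$ of i.i.d.\ variables with the prescribed tail \eqref{eq:tail_W}: moderate deviations follow from the CLT-scale MDP (rate $a^2/(2\Var(W))$), large deviations from Cramér's theorem (rate $\Lambda_W^*$), and very large deviations from the heavy-tail one-big-jump heuristic (rate $a \mapsto \infty \indic_{a<0} + c a \indic_{a \geqslant 0}$), all of which can be obtained from results recalled earlier or from standard references on large deviations for heavy-tailed sums. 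In view of Lemma~\ref{lem:dev2ldp}, it suffices throughout to establish matching upper and lower bounds for left and right deviations separately.

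For the \emph{large deviations} part, the heart of the matter is the induction step producing the recursion \eqref{eq:Inrecurrence}. For the lower bound I would use the standard strategy: to make $Z_r^n$ close to $a = wz + s$, I force a fraction producing $w$ "exceptional" children whose $W_i$ is of order $r w$ (this costs $e^{-crw(1+o(1))}$ by \eqref{eq:tail_W} and combinatorial factors that are subexponential after taking $\frac{1}{r}\log$), condition those children to have $Z_{r,i}^{n-1} \approx z$ (each costing $e^{-r I^{n-1}(z) \cdot (\text{fraction})}$, using the induction hypothesis and the fact that the number of such children is $o(r)$, so actually a single exceptional child with $W_i \approx rw$ and $Z^{n-1}_{r,i}\approx z$ suffices and the $I^{n-1}(z)$ cost is negligible), and ask the remaining $r(1+o(1))$ ordinary children to have empirical $W$-mean contributing $s$, which by the $n=1$ analysis costs $e^{-r\Lambda_W^*(s)(1+o(1))}$; optimizing over the decomposition $wz+s=a$ gives the claimed rate. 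For the upper bound I would decompose the event $\{Z_r^n \geqslant a\}$ according to how many children carry an atypically large contribution and of what size — this is exactly the heavy-tailed "peeling" argument alluded to in the introduction (cf.\ \cite{Nagaev69-1,Nagaev69-2}): either no single $W_i$ is large, in which case a truncation plus Cramér-type argument controls the sum with rate $\Lambda_W^*$; or some children have $W_i Z_{r,i}^{n-1}/r$ of macroscopic size $w z$, costing $crw$ for the tail of $W_i$ and $r I^{n-1}(z)$ for forcing $Z^{n-1}_{r,i}\approx z$ via the induction hypothesis, while the complementary sum is handled again at rate $\Lambda_W^*(s)$; discretizing the possible values of $w,z,s$ and taking a union bound over the (polynomially many) cases yields the infimum in \eqref{eq:Inrecurrence}.

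For \emph{moderate deviations} the induction is softer: since $\alpha \in \intervalleoo{0}{1/2}$, one is on a scale where the heavy tail of $W$ is irrelevant (the probability of any $W_i$ exceeding $\varepsilon r^{1-\alpha}$ is super-exponentially small compared to the speed $r^{1-2\alpha}$), so the truncated variables satisfy exponential moment bounds and one may apply a Gärtner–Ellis / exponential-equivalence argument. Writing $r^\alpha(Z_r^n-1) = \frac{1}{r}\sum_i r^\alpha(W_i Z_{r,i}^{n-1} - 1) = \frac{1}{r}\sum_i W_i\, r^\alpha(Z_{r,i}^{n-1}-1) + \frac{1}{r}\sum_i r^\alpha(W_i-1)$, one sees that the fluctuations add up in variance: the first term, by the induction hypothesis and a conditional MDP, contributes variance $\Var(W)^{?}$... more precisely the limiting log-moment generating function at scale $r^{1-2\alpha}$ is $\lambda^2 \sigma_n^2/2$ with $\sigma_n^2$ satisfying $\sigma_n^2 = \Espe[W^2]\sigma_{n-1}^2 + \Var(W) = \Var(W)\sum_{k=0}^{n-1}\Espe[W^2]^k$? — here I must be careful, and in fact the correct bookkeeping (matching the stated rate $a^2/(2\Var(W))$ independent of $n$) comes from normalizing by the martingale structure, i.e.\ the CLT for $Z_r^n$ has variance $\Var(W)\cdot\frac{1-r^{-n}}{1-r^{-1}}\cdot r^{-?}$ so that after the $r^\alpha$ scaling only the leading $\Var(W)$ survives as $r\to\infty$; the MDP rate function is then the Legendre transform $a^2/(2\Var(W))$. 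The main obstacle is getting this variance computation and the exponential-tightness estimate exactly right uniformly in the conditioning.

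For \emph{very large deviations}, the scaling $r^{-\alpha}Z_r^n$ at speed $r^{1+\alpha/n}$ signals that the dominant mechanism is a single "deep" line of descent: to reach $Z_r^n \approx a r^\alpha$ one needs one child whose own rescaled mass $Z^{n-1}$ is of order $a^{(n-1)/n} r^{\alpha(n-1)/n + 1/n}$... iterating, the cheapest way is a chain $i_1,\dots,i_n$ along which each $W_{i_1,\dots,i_k}$ is of order $r^{1+\alpha/n}a^{1/n}$, the total cost being $n$ times $c\cdot r^{1+\alpha/n} a^{1/n}$ by independence and \eqref{eq:tail_W} — giving rate $cn a^{1/n}$ for $a\geqslant0$ and $\infty$ for $a<0$ since $Z_r^n\geqslant0$. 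I would prove the lower bound by exhibiting exactly this chain event and the upper bound by induction: conditioning on $M_r := \max_i W_i$, either $M_r \leqslant \delta r^{1+\alpha/n}$ and then $Z_r^n$ is with overwhelming probability much smaller than $ar^\alpha$ (a Markov/moment estimate on the sum of $r$ bounded-by-$\delta r^{1+\alpha/n}$ terms each with mean-type behavior, noting $\frac{1}{r}\sum W_i Z^{n-1}_{r,i}$ concentrates), or some $W_i$ exceeds $\delta r^{1+\alpha/n}$, costing at least $c\delta r^{1+\alpha/n}$, and then one recurses on $Z^{n-1}_{r,i}$ which must itself be $\gtrsim a' r^{\alpha'}$ at the appropriate sub-scale by the induction hypothesis; optimizing the split of the budget across the $n$ levels gives the symmetric answer $cn a^{1/n}$. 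The single hardest point across the whole proof is the large-deviations upper bound induction step: making the heavy-tailed peeling rigorous while simultaneously invoking the induction hypothesis for $I^{n-1}$ on the conditioned sub-cascades, with uniform control over the discretization — this is where the combinatorial counting of exceptional children and the non-convexity of the resulting $I^n$ make the bookkeeping delicate.
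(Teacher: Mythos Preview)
Your overall architecture—induction on $n$ via $Z_r^n=\frac1r\sum_i W_i Z_{r,i}^{n-1}$, lower bounds from explicit events, upper bounds from a decomposition by ``exceptional'' children—is exactly the paper's, and your very-large-deviation sketch (single chain for the lower bound, peeling plus induction for the upper bound) is correct.

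There is, however, a genuine error in your large-deviation \emph{lower} bound. You write that with a single exceptional child ``the $I^{n-1}(z)$ cost is negligible''. It is not. By the induction hypothesis $\Prob(Z_{r,1}^{n-1}\geqslant z)=e^{-r I^{n-1}(z)(1+o(1))}$, which at speed $r$ contributes exactly $I^{n-1}(z)$; this is one of the three summands in \eqref{eq:Inrecurrence}, and dropping it yields the wrong rate function. The correct event is: $W_1\geqslant rw$ (cost $cw$), \emph{and} $Z_{r,1}^{n-1}\geqslant z$ (cost $I^{n-1}(z)$), \emph{and} $\frac1r\sum_{i\geqslant 2}W_i\geqslant s$ with all $Z_{r,i}^{n-1}\geqslant 1-\varepsilon$ (cost $\Lambda_W^*(s)+o(1)$). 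The paper packages this as a contraction principle applied to $\bigl((W_1,Z_{r,1}^{n-1},\sum_{i\geqslant2}W_i)/r\bigr)_{r\geqslant2}$ and the map $(w,z,s)\mapsto wz+(1-\varepsilon)s$.

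Your moderate-deviation plan is also weaker than the rest. The paper does \emph{not} use truncation plus exponential equivalence. For the left tail it computes directly, for $s<0$, $\frac{1}{r^{1-2\alpha}}\log\Espe[e^{sr^{1-\alpha}(Z_r^n-1)}]\to \frac{s^2}{2}\Var(W)$ via a second-order Taylor expansion, the key input being $\Espe[(Z_r^{n-1})^2]\to1$; this single limit is what resolves your variance confusion and explains why the rate is $J$ independently of $n$. For the right tail the paper reuses the \emph{same} $P_{n,r,q}$ decomposition as in the large-deviation proof, now with threshold $Z_{r,i}^{n-1}\geqslant 1+\varepsilon r^{-\alpha}$, rather than any Gärtner--Ellis argument.

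A minor structural difference: in the upper bounds (all three regimes) the paper decomposes on how many $Z_{r,i}^{n-1}$ are large, not on how many $W_i$ (or $\max_i W_i$) are large as you suggest. Both can be made to work, but conditioning on $Z_{r,i}^{n-1}$ plugs directly into the induction hypothesis and avoids having to disentangle the product $W_i Z_{r,i}^{n-1}$.
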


Here are some remarks concerning the very large deviations (Theorem \ref{th:finite}, item \ref{th:finiteLD}). It follows from the proof that the left deviations are in fact at speed $(r)_{r \geqslant 2}$, with rate function $a \mapsto \infty \indic_{a < 0} - \log (p) \indic_{a = 0}$. Concerning the right deviations, for $c = \infty$, the rate function at speed $(r^{1+\alpha/n})_{r \geqslant 2}$ is degenerate, and the informative speed should be $o(r^{1+\alpha/n})$. For instance, if $\log \Prob(W \geqslant w) \sim - c' w^\tau$ with $\tau > 1$, then the speed may depend on $(c', \tau)$, but this result is out of the scope of this paper.

Concerning the large deviations (Theorem \ref{th:finite}, item \ref{th:finiteLD}), according to Proposition \ref{prop:leftLD}, that, for all $n \in \N^*$,
\[
\forall a \leqslant 1 \quad I^n(a) = \Lambda_W^*(a) .
\]
%when $c=0$, for all $n \in \N^*$, $I^n=0$ (noninformative large deviation result), and
% \thierry{22/09 : Ajouter ici aussi l'expression pour $a \geqslant 1$ :
% \[
% I_n(a) = \inf \ensavec{cw + I^{n-1}(z)+\Lambda_W^*(s)}{w \geqslant 0,\ z \in \intervalleff{1}{a},\ s \in \intervalleff{1}{a},\ wz + s = a}.
% \]}
% \pierre{22/09 : The expression of the rate function suggests that $(Z_r^n)_{r \geqslant 2}$ could be exponentially equivalent to $(\tilde{Z}_r^n)_{r \geqslant 2} = ((W_1 Z_{r,1}^{n-1} + W_2 + \dots + W_r) / r)_{r \geqslant 2}$. It is not so. Indeed, for all $\delta > 0$,
% \begin{align*}
% \frac{1}{r} \log \Prob(\lvert Z_r^n - \tilde{Z}_r^n\rvert > \delta)
%  & \geqslant \frac{1}{r} \log \Prob\biggl( \frac{1}{r} \sum_{i=2}^r W_i(Z_{r,i}^{n-1} - 1) > \delta \biggr) \\
%  & \geqslant \frac{1}{r} \log \Prob(W_1 > r \delta^{1/2},\ Z_{r,1}^{n-1} - 1 > \delta^{1/2},\ \forall i \in \intervallentff{2}{r}\ Z_{r, i}^{n-1} - 1 \geqslant 0) \\
%  & \to - (\delta^{1/2} + I^{n-1}(1 + \delta^{1/2}) + \log(2)).
% \end{align*}}
Moreover, if $c = \infty$, then, for all $n \in \N^*$, $I^n=\Lambda_W^*$. This case (equivalent to $\Lambda_W < \infty$ everywhere; see Proposition \ref{prop:Lambda}, item \ref{prop:Lambda_dom}) was studied in \cite{lrr} and the proof relies on exponential approximation. Here we provide another proof that encompasses all the cases $c \in \intervalleof{0}{\infty}$. Moreover, some properties of the functions $I^n$ are given in Propositions \ref{prop:Indecroissante1} and \ref{prop:Indecroissante2}. Notably, we show that $I^n(a) \sim c n a^{1/n}$ as $a \to \infty$, so the rate function of the very large deviations coincides with the asymptotics of the rate function of the large deviations. Also, we prove that $(I^n)_{n \geqslant 1}$ is a decreasing sequence of functions. In particular, we may introduce
\[
I^\infty \defeq \lim_{n \to \infty} \downarrow I^n ,
\]
which appears to be the rate function of the large deviation principle in the infinite tree, as stated below.

%\open{Concerning (iii), if $c=\infty$, then the rate function is infinite except in $0$ so the correct speed should be greater.}

\begin{thm}[infinite tree] \leavevmode \label{th:infinite}

\begin{enumerate}
\item \label{th:infiniteMD} \emph{Moderate deviations.}
For all $\alpha \in \intervalleoo{0}{1/2}$, the sequence $(r^\alpha (Z_r^\infty - 1))_{r \geqslant 2}$ satisfies a large deviation principle at speed $(r^{1-2\alpha})_{r \geqslant 2}$ with rate function $J$.

\item \label{th:infiniteLD} \emph{Large deviations.}
The sequence $(Z_r^\infty)_{r \geqslant 2}$ satisfies a large deviation principle at speed $(r)_{r \geqslant 2}$ with rate function $I^\infty$.

\item \label{th:infiniteVL} \emph{Very large deviations.}
For all $\alpha >0$, the sequence $(r^{-\alpha} Z_r^\infty)_{r \geqslant 2}$ satisfies a large deviation principle at speed $(r \log(r))_{r \geqslant 2}$ with rate function $a \mapsto \infty \indic_{a < 0} + c \alpha e \indic_{a > 0}$.
\end{enumerate}
\end{thm}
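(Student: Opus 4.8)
The plan is to treat the three parts uniformly. Each is a large deviation principle, so Lemma~\ref{lem:dev2ldp} reduces it to matching upper and lower estimates for the left tail $\Prob(Z_r^\infty\leqslant t)$ and the right tail $\Prob(Z_r^\infty\geqslant t)$, at the scaling of $t$ and the speed appropriate to the regime. The \emph{left} tails are the soft part. Since the tail hypothesis~\eqref{eq:tail_W} with $c>0$ forces every polynomial moment of $W$ to be finite, in particular $\Espe[W\log W]<\infty$, Proposition~\ref{prop:leftLD} gives the large-deviation left tail with rate $\Lambda_W^*$; as $I^n$ coincides with $\Lambda_W^*$ on $\intervalleof{-\infty}{1}$ for every $n$, so does $I^\infty$, which settles item~\ref{th:infiniteLD} on that side. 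For item~\ref{th:infiniteMD} I would get the left tail from a Gärtner--Ellis argument at speed $(r^{1-2\alpha})_{r\geqslant2}$ applied to $t\mapsto\log\Espe[e^{-tZ_r^\infty}]$ (which is finite for all $t\geqslant0$ because $Z_r^\infty\geqslant0$), its curvature at the origin being $\Var(W)$ by~\eqref{eq:E_infinite}. For item~\ref{th:infiniteVL} the left tail is trivial ($\Prob(r^{-\alpha}Z_r^\infty<0)=0$), which also pins the rate function to $\infty$ on $\intervalleoo{-\infty}{0}$ and to $0$ at $0$.

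For the \emph{right} tails in the moderate and very large regimes I would use the moment bound together with the moment form of Markov's inequality, $\Prob(Z_r^\infty\geqslant t)\leqslant t^{-p}\Espe[(Z_r^\infty)^{p}]$, for a suitably chosen exponent $p=p(r)$. Writing $\Lambda_\infty(\eta)\defeq\limsup_{r\to\infty}r^{-1}\log\Espe[(Z_r^\infty)^{\eta r}]$, Corollary~\ref{cor:moments} gives $\Lambda_\infty(\eta)=O(\eta^2)$ as $\eta\to0$, the leading constant being $\Var(W)/2$. Taking $t=1+ar^{-\alpha}$ and $p$ of order $r^{1-\alpha}$ (so that $\eta=p/r\to0$ and only the quadratic part of $\Lambda_\infty$ matters) and optimising in $p$ should produce exactly $-J(a)$; taking $t=ar^{\alpha}$ and $p=\eta r$ with $\eta$ increasing to the integrability threshold (the value attached to $c$ by the combinatorics of the moments) should produce $-c\alpha e$, the remaining terms being $o(r\log r)$. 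For the matching \emph{lower} bound in the moderate regime I would transfer the classical moderate deviation principle for $\tfrac1r\sum_{i=1}^r(W_i-1)$: one has $Z_r^\infty-1-\tfrac1r\sum_{i=1}^r(W_i-1)=\tfrac1r\sum_{i=1}^r W_i(Z_{r,i}^\infty-1)$, whose variance is $O(r^{-2})$ and whose (polynomial) tail is negligible at the speed $(r^{1-2\alpha})_{r\geqslant2}$, so it does not affect the principle; the lower bound in the very large regime comes from the chain construction below, with a depth growing like $\lfloor\alpha\log r\rfloor$.

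The real work is the right tail in item~\ref{th:infiniteLD}. For the lower bound I would exploit the self-similarity~\eqref{eq:E_infinite}. Fix $n\geqslant1$ and a ray $\emptyset\to1\to11\to\cdots\to 1^n$ of the $r$-ary tree; along this ray, at each level $k\in\intervallentff{1}{n}$, impose that the weight carried by the $k$-th edge is at least $rw_k$ while the $r-1$ sibling subtrees at that level together contribute at least $s_k-\delta$. These events are independent; \eqref{eq:tail_W} gives $r^{-1}\log\Prob(W\geqslant rw_k)\to-cw_k$, and a Cramér-type lower estimate for $\tfrac1r\sum_{i\geqslant2}W_iZ_{r,i}^\infty$ at rate $\Lambda_W^*$ — obtained by truncation, using that $Z_r^\infty$ cannot fall much below its mean except super-exponentially fast — handles the sibling contributions. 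Unwinding the recurrence~\eqref{eq:Inrecurrence} then gives $\liminf_{r\to\infty}r^{-1}\log\Prob(Z_r^\infty\geqslant a)\geqslant-I^n(a)$ for every $n$, hence $\geqslant-I^\infty(a)$. For the upper bound I would decompose $\Prob(Z_r^\infty\geqslant a)\leqslant\Prob(Z_r^n\geqslant a-\epsilon)+\Prob(Z_r^\infty-Z_r^n\geqslant\epsilon)$: the first term is controlled at rate $I^n(a-\epsilon)$ by the finite-tree large deviation principle (Theorem~\ref{th:finite}, item~\ref{th:finiteLD}), while the second is controlled by the moment bound, by writing $Z_r^\infty-Z_r^n=r^{-n}\sum_{|u|=n}W_u(Z_{r,u}^\infty-1)$ and feeding Corollary~\ref{cor:moments} (on $Z_r^\infty$ and on the sub-cascades $Z_{r,u}^\infty$) into a moment Markov inequality along the recursion, which should yield $\limsup_{r\to\infty}r^{-1}\log\Prob(Z_r^\infty-Z_r^n\geqslant\epsilon)=-\infty$ for every fixed $n\geqslant2$. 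Letting first $n\to\infty$ (with $I^n\downarrow I^\infty$) and then $\epsilon\to0$, and using the regularity of $I^\infty$ (Propositions~\ref{prop:Indecroissante1} and~\ref{prop:Indecroissante2}), one should reach $\limsup_{r\to\infty}r^{-1}\log\Prob(Z_r^\infty\geqslant a)\leqslant-I^\infty(a)$. Corollary~\ref{cor:moments} is what simultaneously makes this step work and guarantees that $I^\infty$ is non-degenerate (positive on $\intervalleoo{1}{\infty}$), so that the resulting principle is the genuine one.

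The main obstacle will be Corollary~\ref{cor:moments} itself, and its use in the large-deviation upper bound. Contrary to the finite-tree case and to the classical setting, no exponential Chernoff bound is available, since $\Espe[e^{tZ_r^\infty}]=\infty$ for every $t>0$; the high-moment estimate is the substitute, and proving it requires using the distributional equation~\eqref{eq:E_infinite} to set up a recursion for $\Espe[(Z_r^\infty)^p]$ and then controlling it, via a careful combinatorial analysis, for $p$ of order $r$. Once that quantitative input is available, bounding the discrepancy $Z_r^\infty-Z_r^n$ at exponential-in-$r$ scale — the step that bridges the finite-tree and infinite-tree large deviation principles — should reduce to feeding it level by level into the self-similar recursion satisfied by $Z_r^\infty-Z_r^n$.
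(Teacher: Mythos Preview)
Your treatment of items~\ref{th:infiniteMD} and~\ref{th:infiniteVL}, and your lower bound for item~\ref{th:infiniteLD} via the ray construction, match the paper's. (For the moderate right lower bound the paper conditions on $\{Z_{r,i}\geqslant 1-\varepsilon r^{-\alpha}\ \forall i\}$ and reduces to Cram\'er for $\sum W_i$, rather than arguing exponential equivalence; your variant is plausible but needs a genuine tail estimate on $r^{\alpha-1}\sum_i W_i(Z_{r,i}^\infty-1)$ at speed $r^{1-2\alpha}$, not just its variance.)

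The genuine gap is in your large-deviation upper bound. The claim
\[
\limsup_{r\to\infty}\frac1r\log\Prob\bigl(Z_r^\infty-Z_r^n\geqslant\epsilon\bigr)=-\infty\quad\text{for every fixed }n\geqslant 2
\]
is false, and in fact it contradicts the lower bound you have just established. If it held, the splitting $\Prob(Z_r^\infty\geqslant a)\leqslant\Prob(Z_r^n\geqslant a-\epsilon)+\Prob(Z_r^\infty-Z_r^n\geqslant\epsilon)$ would give $\overline I^\infty(a)\geqslant I^n(a-\epsilon)$ for \emph{every} $n$, in particular $\overline I^\infty(a)\geqslant I^2(a-\epsilon)\sim 2c\sqrt a$; but you have shown $\overline I^\infty(a)\leqslant\underline I^\infty(a)\leqslant I^\infty(a)$, and minimising $I^n(a)\sim cna^{1/n}$ over $n$ gives $I^\infty(a)=O(\log a)$. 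Concretely, on the event $\{W_{1^k}\geqslant r,\ 1\leqslant k\leqslant n\}\cap\{Z_{r,1^n}^\infty\geqslant 1+2\epsilon\}$ the dominant term of $Z_r^\infty-Z_r^n$ already exceeds $2\epsilon$, and this event has finite rate at speed $r$. Corollary~\ref{cor:moments} controls $\Espe[(Z_r^\infty)^{\eta r}]$, not $\Espe[|Z_r^\infty-Z_r^n|^{\eta r}]$, and the difference does not satisfy a closed recursion on which you could iterate the bound.

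The paper's route is different: it shows (Proposition~\ref{prop:pointfixe}) that $\overline I^\infty$ satisfies the \emph{same} fixed-point identity as the $I^n$'s,
\[
\overline I^\infty(a)=\inf\bigl\{cw+\overline I^\infty(z)+\Lambda_W^*(s):w\geqslant0,\ z,s\in[1,a],\ wz+s=a\bigr\},
\]
by running the $P_{r,q}$-decomposition of Section~\ref{sec:proof_finiteLD} directly on $Z_r^\infty$. Here Corollary~\ref{cor:moments}, item~\ref{cor:moments_grands} enters only to guarantee $\overline I^\infty(1+\varepsilon)>0$, so that the tail $\sum_{q\geqslant q_0}\binom rq P_{r,q}$ can be discarded. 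From the fixed-point identity and the continuity of $\overline I^\infty$ (Lemma~\ref{lem:Isup_continuous}), a bootstrap starting from the initial interval $[1,a_W]$ on which $\overline I^\infty=\Lambda_W^*\geqslant I^\infty$, and using that the minimiser $z(a)$ stays uniformly below $a$ on compacts (Lemma~\ref{lem:z_a}), propagates $\overline I^\infty\geqslant I^\infty$ to all of $[1,\infty)$.
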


\begin{remark}
Here are some remarks concerning the very large deviations in Theorem \ref{th:infinite}.
\begin{enumerate}
\item It follows from the proof that the left deviations are in fact at speed $(r)_{r \geqslant 2}$, with rate function $a \mapsto \infty \indic_{a < 0} - \log (p) \indic_{a = 0}$.

\item Concerning the right deviations, for $c = \infty$, the rate function at speed $(r \log(r))_{r \geqslant 2}$ is degenerate, and the informative speed should be $o(r \log(r))$. For instance, if $\log \Prob(W \geqslant w) \sim - c' w^\tau$ with $\tau > 1$, then the speed may depend on $(c', \tau)$, but this result is out of the scope of this paper.

\item As for $c \in \intervalleoo{0}{\infty}$, the rate function at speed $(r \log(r))_{r \geqslant 2}$ is a positive constant on $\intervalleoo{0}{\infty}$. We expect that, for all $\varepsilon > 0$, $\Loicond{r^{-\alpha} Z_r}{r^{-\alpha} Z_r \geqslant \varepsilon}$ satisfies a large deviation at some speed $o(r \log(r))$ and we conjecture that the speed is in fact $(r)_{r \geqslant 2}$, by approximating the event $\{ r^{-\alpha} Z_r \approx e a / (e - 1) \}$ by the event $\{ W_{1^1} \approx r e a^{1/\log(r^\alpha)}, \dots, W_{1^n} \approx r e a^{1/\log(r^\alpha)} \}$. To get such a result, we should compute the second order in the asymptotics of large deviations, i.e.
\[
\Prob(r^{-\alpha} Z_r \approx a) = \exp(-r\log(r^\alpha) c e + r K(a) + o(r)) .
\]
% Qu'en est-il de $\Loicond{r^{-\alpha} Z_r}{r^{-\alpha} Z_r \geqslant \varepsilon}$ ? Si l'on approxime $\{ r^{-\alpha} Z_r \approx e a / (e - 1) \}$ par l'événement $\{ W_{1^1} \approx r e a^{1/\log(r^\alpha)}, \dots, W_{1^n} \approx r e a^{1/\log(r^\alpha)} \}$, on conjecture
% \[
% \Probcond{r^{-\alpha} Z_r \approx a}{r^{-\alpha} Z_r \geqslant \varepsilon} \approx e^{-r \log(a / \varepsilon)} .
% \]
% Pour arriver à quelque chose, il faudrait obtenir
% \[
% \Prob(r^{-\alpha} Z_r \approx a) = \exp(-r\log(r^\alpha) c e + r I(a) + o(r)) .
% \]
% Pascal suggère de regarder la décomposition épinale, qui permet de faire un changement de mesure pour étudier ce cas où une épine se dégage (voir Lyons 1997).
\end{enumerate}

\end{remark}

The proofs of the large deviation principles derive from left and right deviation estimates and the standard argument that is recalled in appendix (Lemma \ref{lem:dev2ldp}). Table \ref{table:summary} below summarizes the pre-existing results and our contribution:
\begin{table}[h!]
\centering
\newcolumntype{x}[1]{>{\centering\arraybackslash\hspace{0pt}}p{#1}}
\begin{tabular}{|c|x{4cm}|x{4cm}|x{4cm}|}
\cline{2-4}
\multicolumn{1}{c|}{} & $c = \infty$ & $c = \infty$ & $c \in \intervalleoo{0}{\infty}$ \\
\multicolumn{1}{c|}{} & $\esssup(W) < \infty$ & $\esssup(W) = \infty$ & $\esssup(W) = \infty$ \\
\hline
$n \in \N^*$ & \multicolumn{2}{c|}{\cite[Theorem 6.2 (a)]{lrr}} & Theorem \ref{th:finite} \\
\hline
$n = \infty$ & \cite[Theorem 1.4]{liu2000limit} & \multicolumn{2}{c|}{Theorem \ref{th:infinite}} \\
\hline
\end{tabular}
\caption{Summary of previous and new results. Theorems \ref{th:finite} and \ref{th:infinite} encompass the other results on the same line.} \label{table:summary}
\end{table}

\section{Finite tree}\label{sec:prooffinite}

\subsection{About the functions \texorpdfstring{$I^n$}{In}}

The following estimates will be useful in the study of the functions $I^n$.

\begin{prop} \label{prop:Lambda}
Recall the definition of $c$ in \eqref{eq:tail_W}, $\Lambda_W$ in \eqref{eq:Lambda_W}, and $\Lambda_W^*$ in \eqref{eq:Lambda_W_star}.
\begin{enumerate}
\item \label{prop:Lambda_dom} $c = \sup(\dom(\Lambda_W))$; 
\item \label{prop:Lambda_star} If $c \in \intervalleoo{0}{\infty}$, then $\Lambda_W^*(w) \sim cw$ as $w\to \infty$.
\end{enumerate}
\end{prop}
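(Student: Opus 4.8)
The plan is to establish the two items in order, item~\ref{prop:Lambda_dom} being the main input for item~\ref{prop:Lambda_star}.

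\textbf{Item \ref{prop:Lambda_dom}.} Write $t_0 \defeq \sup(\dom(\Lambda_W))$. To see $c \leqslant t_0$, I would check that $\Lambda_W(t) < \infty$ for every $t < c$ (when $c = \infty$ this is to be read as ``for every $t$''). For $t \leqslant 0$ this is trivial since $e^{tW} \leqslant 1$. For $0 < t < c$, choose $t'$ with $t < t' < c$; by \eqref{eq:tail_W} there is $w_0$ with $\Prob(W \geqslant w) \leqslant e^{-t'w}$ for $w \geqslant w_0$, and writing $\Espe[e^{tW}] = 1 + t \int_0^\infty e^{tw} \Prob(W > w)\,dw$ (Tonelli) shows the integral converges, since its tail is dominated by $e^{(t - t')w}$. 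Hence $\intervalleoo{-\infty}{c} \subseteq \dom(\Lambda_W)$, so $t_0 \geqslant c$. For $t_0 \leqslant c$ (only meaningful when $c < \infty$), suppose on the contrary $t_0 > c$ and pick $t \in \intervalleoo{c}{t_0}$, so that $M \defeq \Espe[e^{tW}]$ is finite; Markov's inequality gives $\Prob(W \geqslant w) \leqslant M e^{-tw}$, whence $\limsup_{w \to \infty} \frac1w \log \Prob(W \geqslant w) \leqslant -t < -c$, contradicting \eqref{eq:tail_W}. Therefore $t_0 = c$.

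\textbf{Item \ref{prop:Lambda_star}.} Assume now $c \in \intervalleoo{0}{\infty}$. For the upper bound, I would fix $w \geqslant 1$ and bound $tw - \Lambda_W(t)$ according to the sign of $t$, using Jensen's inequality $\Lambda_W(t) \geqslant t\Espe[W] = t$ (valid for all $t$), the bound $\Lambda_W(t) \geqslant 0$ for $t \geqslant 0$ (since $e^{tW} \geqslant 1$), and $\Lambda_W(t) = \infty$ for $t > c$ from item~\ref{prop:Lambda_dom}: for $t \leqslant 0$ one gets $tw - \Lambda_W(t) \leqslant t(w - 1) \leqslant 0$, for $0 \leqslant t \leqslant c$ one gets $tw - \Lambda_W(t) \leqslant tw \leqslant cw$, and $t > c$ contributes $-\infty$; hence $\Lambda_W^*(w) \leqslant cw$. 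For the lower bound, fix $\varepsilon \in \intervalleoo{0}{c}$; since $c - \varepsilon/2 < c = t_0$, item~\ref{prop:Lambda_dom} ensures $\Lambda_W(c - \varepsilon/2) < \infty$, so
\[
\Lambda_W^*(w) \;\geqslant\; (c - \varepsilon/2)\,w - \Lambda_W(c - \varepsilon/2) \;\geqslant\; (c - \varepsilon)\,w
\]
for all $w$ large enough, and therefore $\liminf_{w \to \infty} \Lambda_W^*(w)/w \geqslant c - \varepsilon$. As $\varepsilon > 0$ is arbitrary, $\liminf_{w \to \infty} \Lambda_W^*(w)/w \geqslant c$, and combined with $\Lambda_W^*(w) \leqslant cw$ this gives $\Lambda_W^*(w)/w \to c$, i.e.\ $\Lambda_W^*(w) \sim cw$.

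\textbf{Main difficulty.} There is no serious obstacle; the statement is elementary. The only points requiring care are the uniform treatment of the degenerate case $c = \infty$ in item~\ref{prop:Lambda_dom}, and, for the lower bound in item~\ref{prop:Lambda_star}, the fact that one needs item~\ref{prop:Lambda_dom} precisely to know that $\Lambda_W$ stays finite just below $c$ — which is why the two items are proved in this order.
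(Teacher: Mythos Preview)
Your proof is correct and follows essentially the same route as the paper: both directions of item~\ref{prop:Lambda_dom} are obtained via the tail bound plus an integral representation of $\Espe[e^{tW}]$ one way and Markov's inequality the other, and item~\ref{prop:Lambda_star} combines the trivial upper bound $\Lambda_W^*(w)\leqslant cw$ for $w\geqslant 1$ with the lower bound from evaluating $tw-\Lambda_W(t)$ at $t=c-\varepsilon$. The only cosmetic differences are that you phrase the inequality $t_0\leqslant c$ by contradiction and use the layer-cake formula $\Espe[e^{tW}]=1+t\int_0^\infty e^{tw}\Prob(W>w)\,dw$ instead of the paper's $\int_0^\infty \Prob(e^{tW}\geqslant u)\,du$.
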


\begin{proof}[Proof of Proposition \ref{prop:Lambda}] \leavevmode

\begin{enumerate}
\item Let $m = \sup(\dom(\Lambda_W))$. First, let $t < c' < c$. By \eqref{eq:tail_W}, there exists $w_0 > 0$ such that, for all $w \geqslant w_0$,
\[
\Prob(W \geqslant w) \leqslant e^{- c' w} .
\]
Since $W \geqslant 0$, one has
\begin{align*}
\Espe[e^{t W}]
 = \int_{u=0}^\infty \Prob(e^{tW} \geqslant u) du 
 = \int_{u=0}^\infty \Prob(W \geqslant \log(u) / t) du 
\leqslant 
%e^{w_0} + \int_{u=e^{w_0}}^\infty e^{- c' \log(u) / t} du \\=
  e^{t w_0} + \int_{u=e^{t w_0}}^\infty u^{- c' / t} du 
 < \infty . 
\end{align*}
Thus $t\leqslant m$, so $c\leqslant m$. Second, let $t<m$. By Markov's inequality, 
\begin{align*}
\frac{1}{w} \log \Prob(W \geqslant w) &\leqslant -t + \frac 1w \log\Espe[e^{tW}].
\end{align*}
Taking the limit superior as $w\to \infty$ leads to $-c\leqslant -t$ whence $c\geqslant m$.

\item First, for all $\varepsilon > 0$,
\[
\Lambda_W^*(w)
 = \sup_{t \in \R} [t w - \Lambda_W(t)]
 \geqslant (c - \varepsilon) w - \Lambda_W(c - \varepsilon) ,
\]
so, since $\Lambda_W(c - \varepsilon) < \infty$ by item \ref{prop:Lambda_dom}, $\liminf_{w \to \infty} \Lambda_W^*(w) / w \geqslant c - \varepsilon$. Since $\varepsilon > 0$ is arbitrary, we get $\liminf_{w \to \infty} \Lambda_W^*(w) / w \geqslant c$.

Secondly, for $w \geqslant 1$, by item \ref{prop:Lambda_dom},
\[
\Lambda_W^*(w)
 = \sup_{0 \leqslant t \leqslant c} [t w - \Lambda_W(t)]
 \leqslant c w - \inf_{0 \leqslant t \leqslant c} \Lambda_W(t)
 = c w ,
\]
so $\limsup_{w \to \infty} \Lambda_W^*(w) / w \leqslant c$.\qedhere
\end{enumerate}
\end{proof}

% I^n

Now we turn to the properties of the functions $I^n$.

\begin{prop} \leavevmode \label{prop:Indecroissante1}
\begin{enumerate}
\item \label{prop:Indecroissante_1} $I^n(1)=\Lambda_W^*(1)=0$.
\item \label{prop:Indecroissante_increasing} For all $n \in \N^*$, the function $I^n$ is nondecreasing on $\intervallefo{1}{\infty}$. Moreover, if $c \in \intervalleof{0}{\infty}$, then the function $I^n$ is increasing on $\intervallefo{1}{\infty} \cap \{ I^n < \infty \}$.
\item \label{prop:Indecroissante_positive}If $c \in \intervalleof{0}{\infty}$, then, for all $a>1$, $I^n(a)>0$.
\item \label{prop:Indecroissante_nonincreasing} The sequence of functions $(I^n)_{n\geqslant 1}$ is nonincreasing. 
\item \label{prop:Indecroissante_recurrence2} For $a\geqslant 1$,
\begin{align}\label{eq:Inrecurrence2}
I^n(a) = \inf
\ensavec{cw + I^{n-1}(z)+\Lambda_W^*(s)}{w \geqslant 0,\ z \in \intervalleff{1}{a},\ s \in \intervalleff{1}{a},\ wz + s = a} .
\end{align}
\end{enumerate}
\end{prop}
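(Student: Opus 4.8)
The plan is to prove the five items roughly in order, each one feeding the next. For item \ref{prop:Indecroissante_1}, I would first recall that $\Lambda_W^*(1) = 0$ since $\Espe[W] = 1$ forces $\Lambda_W(t) \geqslant t$ with equality in the derivative at $0$, so $\sup_t[t - \Lambda_W(t)] = 0$. Then, by an immediate induction using \eqref{eq:Inrecurrence}, the choice $w = 0$, $z = 1$, $s = 1$ gives $I^n(1) \leqslant c \cdot 0 + I^{n-1}(1) + \Lambda_W^*(1) = 0$, and since all three summands in the infimum are nonnegative ($c \geqslant 0$, $I^{n-1} \geqslant 0$ by induction, $\Lambda_W^* \geqslant 0$), we get $I^n(1) \geqslant 0$, hence equality.

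For item \ref{prop:Indecroissante_recurrence2} I would argue that in the defining infimum \eqref{eq:Inrecurrence} one may restrict to $z \geqslant 1$ and $s \geqslant 1$ without changing the value when $a \geqslant 1$: if $z < 1$ then since $I^{n-1}$ is nondecreasing on $[1,\infty)$ and equals $\Lambda_W^*$ on $(-\infty,1]$ (by Proposition \ref{prop:leftLD}, as noted after Theorem \ref{th:finite}), which is nonincreasing on $[0,1]$, one can push $z$ up toward $1$ while simultaneously adjusting $w$ (keeping $wz + s = a$) to only decrease the objective — this needs a short monotonicity bookkeeping, treating separately the subcase $w = 0$. Similarly $s \geqslant 1$ can be imposed. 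The upper bounds $z \leqslant a$ and $s \leqslant a$ are then automatic from $wz + s = a$ with $w, z, s \geqslant 0$ and $z, s \geqslant 1$. I expect this step to be the main technical obstacle, since it requires careful case analysis on whether the constraint $wz + s = a$ is "tight" and whether $I^{n-1}$ is being evaluated on $[0,1]$ or $[1,\infty)$; the cleanest route is probably to first establish item \ref{prop:Indecroissante_increasing} for $n-1$ and use it here.

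For item \ref{prop:Indecroissante_increasing}, monotonicity on $[1,\infty)$ follows by induction: given $1 \leqslant a \leqslant a'$ and a near-optimal triple $(w, z, s)$ for $I^n(a')$ with $z, s \geqslant 1$ (available by item \ref{prop:Indecroissante_recurrence2}), I would produce a competitor for $I^n(a)$ by decreasing $s$ (if $s > 1$) or decreasing $w$, which lowers the objective since $\Lambda_W^*$ is nondecreasing on $[1,\infty)$ and $cw$ is nondecreasing in $w$. Strict monotonicity when $c \in \intervalleoo{0}{\infty}$ on the finiteness domain: here I would use that on $[1,\infty)$, $\Lambda_W^*$ is strictly increasing (as $\Lambda_W^*(w) \sim cw$ with $c > 0$ and $\Lambda_W^*$ convex, it is eventually strictly increasing, and it is strictly increasing on all of $[1,\infty)$ because it is finite there and convex with $\Lambda_W^*(1) = 0 < \Lambda_W^*(w)$ for $w$ large), so any attempt to realize a strictly larger $a$ forces a strict increase in at least one of $cw$, $\Lambda_W^*(s)$, or (by induction) $I^{n-1}(z)$.

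Item \ref{prop:Indecroissante_positive} is then a corollary: for $a > 1$, in the representation \eqref{eq:Inrecurrence2} with $z, s \in [1, a]$ and $wz + s = a$, either $s > 1$ (forcing $\Lambda_W^*(s) > 0$ when $c > 0$, since $\Lambda_W^*$ vanishes only at $1$ on $[1,\infty)$) or $s = 1$, in which case $wz = a - 1 > 0$ so $w > 0$, forcing $cw > 0$; in both cases the infimum is over a sum bounded below by a positive quantity — one must check the infimum is actually attained or bounded away from $0$, which follows because as $w \to 0$ one needs $s \to a > 1$ keeping $\Lambda_W^*(s)$ bounded below. Finally, item \ref{prop:Indecroissante_nonincreasing}, that $(I^n)_n$ is nonincreasing, follows by taking $z = 1$, $w = 0$, $s = a$ in \eqref{eq:Inrecurrence} to get $I^n(a) \leqslant I^{n-1}(1) + \Lambda_W^*(a) = \Lambda_W^*(a) = I^1(a)$ for the base comparison, and more generally by induction: assuming $I^{n-1} \leqslant I^{n-2}$, every competitor $(w,z,s)$ for $I^n(a)$ gives, via $I^{n-1}(z) \leqslant I^{n-2}(z)$, a competitor showing $I^n(a) \leqslant I^{n-1}(a)$.
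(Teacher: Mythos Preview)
Your overall strategy is close to the paper's, but there is one concrete error and one structural complication.

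The error is in item \ref{prop:Indecroissante_recurrence2}: the claim that ``the upper bounds $z \leqslant a$ and $s \leqslant a$ are then automatic from $wz + s = a$ with $w, z, s \geqslant 0$ and $z, s \geqslant 1$'' is false for $z$. If $w = 0$ then $s = a$ and $z$ is unconstrained in $\intervallefo{1}{\infty}$; even for small $w > 0$, $z = (a-s)/w$ is unbounded. The paper obtains $z \leqslant a$ only after first proving items \ref{prop:Indecroissante_increasing} and \ref{prop:Indecroissante_nonincreasing}: for $z > a$ and $c \in \intervalleof{0}{\infty}$ one has $I^{n-1}(z) > I^{n-1}(a) \geqslant I^n(a)$ (strict monotonicity plus $(I^n)_n$ nonincreasing), while $cw + \Lambda_W^*(s) > 0$ for every $(w,s) \in \intervallefo{0}{\infty} \times \intervallefo{1}{\infty} \setminus \{(0,1)\}$; hence no $z > a$ can realise the infimum.

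The structural complication is that your proof of item \ref{prop:Indecroissante_increasing} appeals to item \ref{prop:Indecroissante_recurrence2} at the same level $n$, forcing a joint induction you would have to set up carefully. The paper avoids this entirely by proving item \ref{prop:Indecroissante_increasing} directly from the original recursion \eqref{eq:Inrecurrence}, with no restriction to $z, s \geqslant 1$: given $(w,z,s)$ with $wz + s = a_1 > a_0 \geqslant 1$, either $s \geqslant a_0$, in which case $(0, z, a_0)$ is a competitor for $I^n(a_0)$ with no larger cost (using only that $\Lambda_W^*$ is nondecreasing on $\intervallefo{1}{\infty}$), or $s < a_0$, in which case $((a_0 - s)/z, z, s)$ works. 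This needs no induction and cleanly decouples item \ref{prop:Indecroissante_increasing} from item \ref{prop:Indecroissante_recurrence2}, allowing the order $1 \to 2 \to 3 \to 4 \to 5$.
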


\begin{proof}[Proof of Proposition \ref{prop:Indecroissante1}] \leavevmode

\begin{enumerate}
\item Obvious. 

\item Let us define the function
\begin{align}\label{def:Gn}
G^{n-1}(w,z,s) \defeq cw + I^{n-1}(z)+\Lambda_W^*(s).
\end{align}
Let $a_1 > a_0 \geqslant 1$ and $(w, z, s) \in (\R_+)^3$ be such that $wz + s = a_1$. Either $s\geqslant a_0$, then 
$G^{n-1}(w,z,s) \geqslant G^{n-1}(0,z,a_0)$, because $\Lambda_W^*$ is nondecreasing on $\intervallefo{1}{\infty}$; or $s<a_0$, then 
$G^{n-1}(w,z,s) \geqslant G^{n-1}((a_0-s)/z,z,s)$. Hence $I^n(a_1) \geqslant I^n(a_0)$. If $c \in \intervalleoo{0}{\infty}$, then the preceding argument holds with strict inequalities, using the fact that $\Lambda_W^*$ (resp.\ $w \mapsto cw$) is increasing on $\intervallefo{1}{\infty}$ (resp.\ on $\intervallefo{0}{\infty}$), so $I^n$ is increasing on $\intervallefo{1}{\infty}$. If $c = \infty$, then $I^n = \Lambda_W^*$ is increasing on $\intervallefo{1}{\infty} \cap \{ \Lambda_W^* < \infty \}$.

\item It is an immediate consequence of items \ref{prop:Indecroissante_1} and \ref{prop:Indecroissante_increasing}.
%We proceed by induction. Since $I^1=\Lambda_W^*$ it is true for $n=1$. Now we assume that it is  true for some $n\geqslant1$. Let $a>1$ by  \eqref{eq:Inrecurrence} 
%\begin{align*}
%I^{n+1}(a)&\geqslant \min (\inf\ensavec{cw+\Lambda_W^*(s)}{w \geqslant 0,\ 0\leqslant z \leqslant 2,\ s \geqslant 0,\ wz + s = a},\\&\hspace{2cm}\inf\ensavec{cw+I^n(2)+\Lambda_W^*(s)}{w \geqslant 0,\  z>2 ,\ s \geqslant 0,\ wz + s = a})\\
%&
%\geqslant \min \left(\inf\ensavec{\frac{c(a-s)}{2} w+\Lambda_W^*(s)}{ s \geqslant 0},I^n(2)\right)\\
%&>0.
%\end{align*}

\item First, we prove that $I^2\leqslant I^1$. It is enough to take $(w,z,s)=(0,1,a)$ in \eqref{eq:Inrecurrence} to get
\[
I^2(a) \leqslant \Lambda_W^*(a)=I^1(a)
\] 
by item \ref{prop:Indecroissante_1} and $I^1=\Lambda^*$. We conclude by induction:
\begin{align*}
I^{n+1}(a) 
%&= \inf
%\ensavec{cw + I^n(z)+\Lambda_W^*(s)}{w \geqslant 0,\ z \geqslant 0,\ s \geqslant 0,\ wz + s = a}\\
&\leqslant \inf
\ensavec{cw + I^{n-1}(z)+\Lambda_W^*(s)}{w \geqslant 0,\ z \geqslant 0,\ s \geqslant 0,\ wz + s = a}= I^n(a).
\end{align*}

\item For all $(w, z, s) \in (\R_+)^3$, $G^{n-1}(w,z,s) \geqslant G^{n-1}(w,\max(1,z),\max(1,s))$.
So, for all $a\geqslant 1$,
\begin{align*}
I^{n}(a)
 & = \inf
\ensavec{cw + I^{n-1}(z)+\Lambda_W^*(s)}{w \geqslant 0,\ z \geqslant 1,\ s \geqslant 1,\ wz + s \geqslant a} \\
 & = \inf
\ensavec{cw + I^{n-1}(z)+\Lambda_W^*(s)}{w \geqslant 0,\ z \geqslant 1,\ s \geqslant 1,\ wz + s = a} ,
\end{align*}
proceeding as in the proof of item \ref{prop:Indecroissante_increasing}.
%Now, if $wz + s > a$, either $s\geqslant a$, then 
%$G^{n-1}(w,z,s) \geqslant G^{n-1}(0,z,a)$; or $s<a$, then 
%$G^{n-1}(w,z,s) \geqslant G^{n-1}((a-s)/z,z,s)$.
Hence, since $w z \geqslant 0$ and $w z + s = a$ imply $s \leqslant a$,
\begin{align*}
I^{n}(a)
 & = \inf
\ensavec{cw + I^{n-1}(z)+\Lambda_W^*(s)}{w \geqslant 0,\ z \geqslant 1,\ s \in \intervalleff{1}{a},\ wz + s = a} \\
 & = \inf
\ensavec{cw + I^{n-1}(z)+\Lambda_W^*(s)}{w \geqslant 0,\ z \in \intervalleff{1}{a},\ s \in \intervalleff{1}{a},\ wz + s = a} ,
\end{align*}
where the latter equality is obvious for $c = 0$ and follows, for $c \in \intervalleof{0}{\infty}$, from the fact that $I^{n-1}(z) > I^{n-1}(a) \geqslant I^n(a)$ for all $z > a$ (by items \ref{prop:Indecroissante_increasing} and \ref{prop:Indecroissante_nonincreasing}), and $cw + \Lambda_W^*(s) > 0$ for all $(w, s) \in \intervallefo{0}{\infty} \times \intervallefo{1}{\infty} \setminus \{ (0, 1) \}$.\qedhere

%As a consequence, we also have, for $a\geqslant 1$, 
%\begin{align*}
%I^{n}(a) = \inf
%\ensavec{cw + I^{n-1}(z)+\Lambda_W^*(s)}{w \geqslant 0,\ z \geqslant 0,\ s \geqslant 0,\ wz + s = a}
%\end{align*}
%since the latter infimum is between that in %\eqref{eq:Inrecurrence} and that in \eqref{eq:Inrecurrence2}.

%Finally, Equation \eqref{eq:Inrecurrence1strict} is a straightforward consequence of \eqref{eq:Inrecurrence} and $G^{n-1}(w, 0, a/(1-\varepsilon)) \geqslant G^{n-1}(0, 0, a/(1-\varepsilon)) \geqslant G^{n-1}(0, 1, a/(1-\varepsilon))$.\qedhere
%, and the continuity of $w \mapsto c w$ and $\Lambda_W^*$.
\end{enumerate}
\end{proof}

For all $a \geqslant 1$ and $z \in \intervalleff{1}{a}$, let
\begin{align} \label{eq:def_h}
h(a, z)
 & \defeq \frac{c a}{z} - \sup_{1 \leqslant s \leqslant a} \Bigl[ \frac{c s}{z} - \Lambda_W^*(s) \Bigr]  
\end{align}
so that, using \eqref{eq:Inrecurrence2} (Proposition \ref{prop:Indecroissante_1}, item \ref{prop:Indecroissante_recurrence2}),
\begin{align}\label{eq:Inh}
I^n(a) = \inf_{1 \leqslant z \leqslant a} \bigl[ I^{n-1}(z) + h(a, z) \bigr] .
\end{align}
Let also
\[
a^* \defeq \inf \ensavec{s \geqslant 0}{\Lambda_W'(s) = a}
\]
be the convex conjugate of $a$ through $\Lambda_W$. Note that, if $a \leqslant \Lambda_W'(c^-)$, then $a^*$ is the unique solution in $s$ of $\Lambda_W'(s) = a$; and if $a > \Lambda_W'(c^-)$, then $a^* = \infty$.

\begin{lem}\label{lem:h}
For all $a \geqslant 1$ and $z \in \intervalleff{1}{a}$,
\begin{align*}
h(a, z)
 & = \begin{cases}
\Lambda_W^*(a) & \text{if $z \leqslant c / a^*$} \\
\frac{c a}{z} - \Lambda_W\bigl( \frac{c}{z} \bigr) & \text{if $z \geqslant c / a^*$.}
\end{cases}
\end{align*}
and the function $h$ is continuous.
\end{lem}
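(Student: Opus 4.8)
The plan is to analyze the inner supremum $\sup_{1 \leqslant s \leqslant a}\bigl[\frac{cs}{z} - \Lambda_W^*(s)\bigr]$ appearing in the definition \eqref{eq:def_h} of $h(a,z)$, by relating it to the Legendre duality between $\Lambda_W$ and $\Lambda_W^*$. Write $\phi_z(s) \defeq \frac{cs}{z} - \Lambda_W^*(s)$. Since $z \in \intervalleff{1}{a}$, the slope $c/z$ ranges over $\intervalleff{c/a}{c}$, and in particular $c/z \leqslant c = \sup(\dom(\Lambda_W))$ by Proposition \ref{prop:Lambda}, item \ref{prop:Lambda_dom}. Recalling that $\Lambda_W^*$ is convex, lower semicontinuous, and finite on (at least) a neighborhood of $1 = \Espe[W]$ with $\Lambda_W^*(1) = 0$, and that $\Lambda_W^{**} = \Lambda_W$ on $\dom(\Lambda_W)$, the unconstrained supremum $\sup_{s \in \R}\bigl[\frac{c}{z}s - \Lambda_W^*(s)\bigr]$ equals $\Lambda_W(c/z)$, which is finite. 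The effect of the constraint $s \in \intervalleff{1}{a}$ is governed by where the maximizer of $\phi_z$ falls: the concave (in fact, after the Legendre transform, this is exactly the subdifferential picture) function $\phi_z$ is maximized at the point where $c/z \in \partial\Lambda_W^*(s)$, i.e.\ where $\Lambda_W'(s) = c/z$ — but since $a^*$ is defined as the conjugate of $a$, i.e.\ $\Lambda_W'(a^*) = a$ (for $a \leqslant \Lambda_W'(c^-)$), the threshold $z = c/a^*$ is precisely the value of $z$ for which the free maximizer of $\phi_z$ sits exactly at $s = a$.

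First I would treat the case $z \geqslant c/a^*$. Here the slope $c/z \leqslant a^* $... wait, rather: $c/z$ is small enough that the free maximizer of $s \mapsto \frac{c}{z}s - \Lambda_W^*(s)$ lies at or below $a$ (one checks, using monotonicity of $\Lambda_W'$, that $\Lambda_W'$ evaluated at the maximizer equals $c/z \leqslant a$, hence the maximizer is $\leqslant a$; also the maximizer is $\geqslant 1$ since $c/z \geqslant c/a \geqslant$ ... here one uses $\Lambda_W'(1^+) \geqslant$ something, or more robustly that $\phi_z$ is nondecreasing on $\intervallefo{-\infty}{1}$ because $\Lambda_W^*$ is zero there or the free max is $\geqslant 1$). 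Then the constrained and unconstrained suprema coincide, giving $\sup_{1\leqslant s\leqslant a}\phi_z(s) = \Lambda_W(c/z)$, hence $h(a,z) = \frac{ca}{z} - \Lambda_W(c/z)$. In the complementary case $z \leqslant c/a^*$, the slope $c/z$ is large (at least $a^* $, i.e.\ at least $\Lambda_W'$-preimage issues force the free maximizer to lie at or beyond $a$), so $\phi_z$ is nondecreasing on $\intervalleff{1}{a}$ and the constrained sup is attained at $s = a$, giving $\sup_{1\leqslant s \leqslant a}\phi_z(s) = \frac{ca}{z} - \Lambda_W^*(a)$ and therefore $h(a,z) = \Lambda_W^*(a)$. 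I would make the "free maximizer location" argument rigorous via the standard fact that $s$ maximizes $ts - \Lambda_W^*(s)$ iff $t \in \partial\Lambda_W^*(s)$ iff $s \in \partial\Lambda_W(t)$, together with the definition of $a^*$ and monotonicity of $\Lambda_W'$; one has to be slightly careful at the boundary $z = c/a^*$ (where both formulas agree by continuity, and when $a^* = \infty$, i.e.\ $a > \Lambda_W'(c^-)$, the condition $z \leqslant c/a^* = 0$ is vacuous so only the second formula applies — consistent with $\Lambda_W(c/z)$ possibly blowing up) and at $z = c/a$ versus the lower end $z=1$.

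For continuity of $h$ on the region $\{(a,z) : a \geqslant 1,\ 1 \leqslant z \leqslant a\}$, I would argue as follows. On each of the two closed subregions $\{z \leqslant c/a^*\}$ and $\{z \geqslant c/a^*\}$, the formula for $h$ is manifestly continuous: $\Lambda_W^*$ is continuous on $\intervallefo{1}{\infty}$ (it is convex and finite there, being dominated by $cw$ for large $w$ by Proposition \ref{prop:Lambda}, item \ref{prop:Lambda_star}, and finite near $1$), and $(a,z) \mapsto \frac{ca}{z} - \Lambda_W(c/z)$ is continuous on $\{z \geqslant c/a^*\}$ because there $c/z \leqslant a^* < $ ... more precisely $c/z$ stays in the interior (or relevant closed part) of $\dom(\Lambda_W)$ where $\Lambda_W$ is continuous — at the boundary $z = c/a^*$ we have $c/z = a^* = \Lambda_W'{}^{-1}(a)$ which is $< c$ when $a < \Lambda_W'(c^-)$, and when $a^*$ hits $\infty$ the region degenerates. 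The only thing to check is that the two formulas agree on the common boundary $z = c/a^*$: there $c/z = a^*$, and by Legendre duality $\Lambda_W^*(a) = a\cdot a^* - \Lambda_W(a^*) = \frac{ca}{z} - \Lambda_W(c/z)$, so the two expressions coincide. A gluing lemma for continuous functions agreeing on the overlap of two closed sets covering the domain then yields continuity of $h$; one should also note the map $a \mapsto c/a^*$ is itself continuous (where finite) so the two subregions are genuinely closed with the stated overlap. The main obstacle I anticipate is handling the degenerate/boundary situations cleanly — namely when $a^* = \infty$ (so $\Lambda_W'(c^-) < a$ and the "$z \leqslant c/a^*$" branch is empty), when $c = \infty$ (then the whole analysis should reduce to $h(a,z) = \Lambda_W^*(a)$ always, and the second branch's threshold $c/a^*$ must be interpreted correctly), and the behavior of $\Lambda_W$ and $\Lambda_W^*$ right at the edges of their effective domains; the "bulk" Legendre-duality computation is routine once these edge cases are pinned down.
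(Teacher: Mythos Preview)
Your derivation of the piecewise formula is essentially the paper's argument: both compute the constrained supremum $\sup_{1\leqslant s\leqslant a}[\tfrac{cs}{z}-\Lambda_W^*(s)]$ by locating the free Legendre maximizer relative to the endpoint $s=a$, and both obtain the dichotomy at $z=c/a^*$. For continuity, however, the paper takes a different and shorter route: rather than gluing the two explicit formulas along the curve $z=c/a^*$, it goes back to the variational definition of $h$ and writes
\[
h(a,z)=\inf_{0\leqslant w\leqslant (a-1)/z}\bigl[cw+\Lambda_W^*(a-wz)\bigr],
\]
observing that the integrand is uniformly continuous on the compact set $\{(a,z,w):1\leqslant a\leqslant A,\ 1\leqslant z\leqslant a,\ 0\leqslant w\leqslant (a-1)/z\}$ and that the upper limit $(a-1)/z$ depends continuously on $(a,z)$; continuity of $h$ then follows from a standard ``infimum of a continuous family over a continuously varying compact interval'' argument. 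This completely bypasses the edge-case bookkeeping you (correctly) flag as the main obstacle in your gluing approach --- continuity of $a\mapsto c/a^*$, the degenerate region when $a^*=\infty$, and the behavior of $\Lambda_W$ at the right edge of its domain. Your approach is correct, but the paper's is more robust and requires no case analysis.
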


\begin{proof}[Proof of Lemma \ref{lem:h}]
An easy computation of Legendre transform leads to
\begin{align*}
h(a, z)
 & = \frac{c a}{z} - \sup_{1 \leqslant s \leqslant a} \Bigl[ \frac{c s}{z} - \Lambda_W^*(s) \Bigr] \\
 & = \begin{cases}
\frac{c a}{z} - \bigl( \Lambda_W(a^*) + \bigl( \frac{c}{z} - a^* \bigr) a \bigr) & \text{if $c / z \geqslant a^*$} \\
\frac{c a}{z} - \Lambda_W\bigl( \frac{c}{z} \bigr) & \text{if $c / z \leqslant a^*$.}
\end{cases} \\
 & = \begin{cases}
\Lambda_W^*(a) & \text{if $z \leqslant c / a^*$} \\
\frac{c a}{z} - \Lambda_W\bigl( \frac{c}{z} \bigr) & \text{if $z \geqslant c / a^*$.}
\end{cases}
\end{align*}
 
Let $A \geqslant 1$. Since $H \colon (a, z, w)\mapsto cw + \Lambda_W^*(a-wz)$ is uniformly continuous on the compact set $\ensavec{(a, z, w)}{1 \leqslant a \leqslant A,\ 1 \leqslant z \leqslant a,\ 0 \leqslant w \leqslant \frac{a-1}{z}}$ and $(a, z) \mapsto (a-1) / z$ is continuous on $D(A) \defeq \ensavec{(a, z)}{1 \leqslant a \leqslant A,\ 1 \leqslant z \leqslant a}$, $h$ is continuous on $D(A)$.
\end{proof}

\begin{prop} \leavevmode \label{prop:Indecroissante2}
\begin{enumerate}
\item \label{prop:Indecroissante_continuous} For all $n \in \N^*$, the function $I^n$ is continuous.
\item \label{prop:Indecroissante_equiv} For all $n \in \N^*$, $I^n(a)\sim cna^{1/n}$ as $a \to \infty$. 
\end{enumerate}
\end{prop}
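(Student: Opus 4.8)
The plan is to prove items \ref{prop:Indecroissante_continuous} and \ref{prop:Indecroissante_equiv} simultaneously by induction on $n$. Throughout I will assume $c \in \intervalleoo{0}{\infty}$: when $c = \infty$ one has $I^n = \Lambda_W^*$ for every $n$ (recalled below Theorem \ref{th:finite}), so item \ref{prop:Indecroissante_continuous} reduces to the classical continuity of the Legendre transform on the interior of its domain and item \ref{prop:Indecroissante_equiv} is vacuous. For $n = 1$, $I^1 = \Lambda_W^*$ is convex and lower semicontinuous, and by Proposition \ref{prop:Lambda} item \ref{prop:Lambda_star} it is finite, hence continuous, on $\intervallefo{0}{\infty}$, with $\Lambda_W^*(a) \sim ca = c \cdot 1 \cdot a^{1/1}$; this is the base case. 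For $a \leqslant 1$ one always has $I^n = \Lambda_W^*$, so below I only need to treat $a \geqslant 1$.

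For the inductive step I will work from the representation \eqref{eq:Inh}, namely $I^n(a) = \inf_{1 \leqslant z \leqslant a}\bigl[I^{n-1}(z) + h(a,z)\bigr]$ for $a \geqslant 1$, together with the elementary rewriting
\[
h(a,z) = \inf_{1 \leqslant s \leqslant a}\Bigl[ \tfrac{c(a-s)}{z} + \Lambda_W^*(s) \Bigr] ,
\]
obtained from \eqref{eq:def_h}. This form shows at once that $h(a,\cdot)$ is nonincreasing on $\intervallefo{1}{\infty}$ (each term is), that $0 \leqslant h(a,z) \leqslant c(a-1)/z$ (take $s=1$), and that $h(a,z) \geqslant ca/z - \Lambda_W(c/z)$ (enlarge the infimum in \eqref{eq:def_h} to all of $\R$, using $\Lambda_W^{**} = \Lambda_W$). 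For the continuity in item \ref{prop:Indecroissante_continuous}: the induction hypothesis gives $I^{n-1}$ continuous and, since $c<\infty$, finite on $\intervallefo{0}{\infty}$ (hence so is $I^n$, by taking $z=1$); as $h$ is continuous by Lemma \ref{lem:h} and the constraint correspondence $a \mapsto \intervalleff{1}{a}$ is continuous with nonempty compact values, a direct compactness argument (or Berge's maximum theorem) shows that $a \mapsto I^n(a)$ is continuous on $\intervallefo{1}{\infty}$; it glues with the $\Lambda_W^*$‑piece at $a=1$ because $I^n(1) = I^{n-1}(1) + h(1,1) = 0 + \Lambda_W^*(1) = 0$.

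For item \ref{prop:Indecroissante_equiv}, assume $I^{n-1}(z) \sim c(n-1) z^{1/(n-1)}$ as $z \to \infty$. The upper bound is immediate: for $a$ large, $z = a^{(n-1)/n} \in \intervalleff{1}{a}$, so by \eqref{eq:Inh} and $h(a,z) \leqslant ca/z$,
\[
I^n(a) \leqslant I^{n-1}\bigl( a^{(n-1)/n} \bigr) + c a^{1/n} = \bigl( c(n-1) + c \bigr) a^{1/n} (1 + o(1)) = c n \, a^{1/n}(1+o(1)) .
\]
For the lower bound, fix $\varepsilon \in \intervalleoo{0}{1}$ and $Z_\varepsilon > 1$ with $I^{n-1}(z) \geqslant (1-\varepsilon) c(n-1) z^{1/(n-1)}$ for $z \geqslant Z_\varepsilon$, and split $\intervalleff{1}{a}$ at $Z_\varepsilon$. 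On $\intervalleff{1}{Z_\varepsilon}$, monotonicity of $h(a,\cdot)$ gives $I^{n-1}(z) + h(a,z) \geqslant h(a, Z_\varepsilon) \geqslant ca/Z_\varepsilon - \Lambda_W(c/Z_\varepsilon)$, which is linear in $a$ (and $\Lambda_W(c/Z_\varepsilon) < \infty$ since $c/Z_\varepsilon < c$), hence eventually $\geqslant c n \, a^{1/n}$. On $\intervalleff{Z_\varepsilon}{a}$, bounding $\Lambda_W(c/z) \leqslant \Lambda_W(c/Z_\varepsilon) =: C_\varepsilon$ and applying the arithmetic–geometric mean inequality to the $n-1$ equal terms $c z^{1/(n-1)}$ together with $ca/z$,
\[
I^{n-1}(z) + h(a,z) \geqslant (1-\varepsilon) c(n-1) z^{1/(n-1)} + \tfrac{ca}{z} - C_\varepsilon \geqslant (1-\varepsilon)\bigl[ c(n-1) z^{1/(n-1)} + \tfrac{ca}{z} \bigr] - C_\varepsilon \geqslant (1-\varepsilon)\, c n \, a^{1/n} - C_\varepsilon .
\]
Taking the infimum over $z$ and then letting $\varepsilon \downarrow 0$ gives $\liminf_{a \to \infty} I^n(a) / a^{1/n} \geqslant c n$, which with the upper bound yields $I^n(a) \sim c n \, a^{1/n}$.

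I expect the main obstacle to be the lower bound in item \ref{prop:Indecroissante_equiv}: one must control $h(a,z)$ uniformly over the admissible band $\intervalleff{1}{a}$, which widens with $a$, isolating the scale $z \approx a^{(n-1)/n}$ where the two competing terms $I^{n-1}(z)$ and $h(a,z) \approx ca/z$ are both of order $a^{1/n}$, while discarding the regimes $z \approx 1$ (where $h$ blows up linearly, by monotonicity) and large $z$ (where $I^{n-1}$ blows up faster than $a^{1/n}$). A secondary point is to check the hypotheses of the maximum theorem cleanly — joint continuity and finiteness (for $c < \infty$) of $(a,z) \mapsto I^{n-1}(z) + h(a,z)$ and continuity of $a \mapsto \intervalleff{1}{a}$ — so that the infimum in \eqref{eq:Inh} depends continuously on $a$ and matches $\Lambda_W^*$ at the junction $a=1$.
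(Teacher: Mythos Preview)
Your proof is correct and follows essentially the same approach as the paper: induction on $n$ via the representation \eqref{eq:Inh}, the same choice $z = a^{(n-1)/n}$ for the upper bound, and a comparable splitting for the lower bound. The only notable variations are that the paper proves item \ref{prop:Indecroissante_continuous} directly from the monotonicity of $I^n$ and the uniform continuity of $h$ (so without Berge's theorem and without needing $I^{n-1}$ continuous), and for the lower bound in item \ref{prop:Indecroissante_equiv} it uses a uniform affine minoration $\Lambda_W^*(s) \geqslant c(1-\varepsilon)s - A_\varepsilon$ and computes the resulting infimum explicitly, whereas you use the cleaner bound $h(a,z) \geqslant ca/z - \Lambda_W(c/z)$ together with AM--GM.
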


\begin{proof}[Proof of Proposition \ref{prop:Indecroissante2}] \leavevmode

\begin{enumerate}
\item Let $a_0 \geqslant 1$ and $\varepsilon > 0$. Since the function $I^n$ is nondecreasing by Proposition \ref{prop:Indecroissante1}, item \ref{prop:Indecroissante_increasing}, it suffices to prove that there exists $a_1 > a_0$ such that $I^n(a_1) \leqslant I^n(a_0) + \varepsilon$. Since $h$ is uniformly continuous on the compact set $D(a_0 + 1)$ where $D(a)$ has been defined int the proof of Lemma \ref{lem:h}, there exists $a_1 \in \intervalleoo{a_0}{a_0 + 1}$ such that, for all $z \in \intervalleff{1}{a_0}$, $h(a_1, z) < h(a_0, z) + \varepsilon$. Then, by \eqref{eq:Inh}, 
\begin{align*}
I^n(a_1)
 & = \inf_{1 \leqslant z \leqslant a_1} \Bigl[ I^{n-1}(z) + h(a_1, z) \Bigr] \\ 
 & \leqslant \inf_{1 \leqslant z \leqslant a_0} \Bigl[ I^{n-1}(z) + h(a_1, z) \Bigr]  \\
 & \leqslant \inf_{1 \leqslant z \leqslant a_0} \Bigl[ I^{n-1}(z) + h(a_0, z) \Bigr] + \varepsilon  \\
 & = I^n(a_0) + \varepsilon . 
\end{align*}
\item We proceed by induction. The result holds for $n=1$ by Proposition \ref{prop:Lambda}, item \ref{prop:Lambda_star} and $I^1=\Lambda^*$.
Let $n \in \N^*\setminus \{1\}$. First, notice that, for all $a \geqslant 1$ and $z \in \intervalleff{1}{a}$, $h(a,z)\leqslant ca/z$ (it suffices to take $s=1$ in the supremum in \eqref{eq:def_h}) so that
\begin{align*}
I^{n+1}(a) &= \inf_{1 \leqslant z \leqslant a} \Bigl[ I^n(z) + h(a, z) \Bigr] \\
&\leqslant \inf_{1 \leqslant z \leqslant a} \Bigl[ I^n(z) + \frac{ca}{z} \Bigr] \\
&\leqslant I^n\bigl(a^{n/(n+1)}\bigr) + c a^{1/(n+1)} \\
& \sim c(n+1) a^{1/(n+1)}
\end{align*}
by \eqref{eq:Inh}, taking $1\leqslant z=a^{n/(n+1)}\leqslant a$, and by induction. Let us turn to the minoration. For all $\varepsilon>0$, there exists $A_\varepsilon>0$ such that
\begin{align*}
I^{n+1}(a) & = \inf_{1 \leqslant z \leqslant a} \Bigl[ I^n(z) + h(a, z) \Bigr] \\
 & \geqslant c \inf_{1 \leqslant z \leqslant a} \Bigl[ nz^{1/n} (1-\varepsilon)  + \frac{a}{z} - \sup_{1 \leqslant s \leqslant a} \Bigl( s \Bigl( \frac{1}{z} - (1 - \varepsilon) \Bigr) \Bigr)  \Bigr] - A_\varepsilon \\
 & \geqslant c \inf_{1 \leqslant z \leqslant a} \Bigl[ nz^{1/n} (1-\varepsilon)  + a \min\Bigl(\frac{1}{z} , 1-\varepsilon \Bigr) \Bigr] - A_\varepsilon \\
 & = c \min ((n+1) a^{1/(n+1)}(1-\varepsilon)^{n/(n+1)},(n+a)(1-\varepsilon)) -A_\varepsilon\\
 & \sim c(n+1) a^{1/(n+1)}(1-\varepsilon)^{n/(n+1)}
\end{align*}
using \eqref{eq:Inh}, by induction, and as $a\to \infty$. Since $\varepsilon$ is arbitrary, we get the result. \qedhere
\end{enumerate}
\end{proof}

For all $n \in \N^*$, let us define
\begin{align}\label{def:pt_rupture}
a_n\defeq \inf \ensavec{a\geqslant 1}{I^{n+1}(a)<I^n(a)}.
\end{align}

\begin{prop}\label{prop:pt_rupture}
The sequence $(a_n)_{n\geqslant 1}$ is increasing and diverges to infinity.
\end{prop}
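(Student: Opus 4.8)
The plan is to work directly with the recursion \eqref{eq:Inh}, namely $I^{n+1}(a) = \inf_{1 \leqslant z \leqslant a}[I^n(z) + h(a,z)]$, together with the fact that $(I^n)_{n\geqslant 1}$ is nonincreasing (Proposition \ref{prop:Indecroissante1}, item \ref{prop:Indecroissante_nonincreasing}) and that all the $I^n$ agree with $\Lambda_W^*$ on $\intervallefo{1}{\infty}$ up to the first point where they separate. The number $a_n$ is exactly the threshold beyond which it becomes strictly advantageous, in the infimum defining $I^{n+1}$, to ``spend'' some mass on the factor $w$ (i.e. to take $z < a$) rather than putting everything on the last-level factor $s$. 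First I would record that $a_n$ is well-defined and $a_n < \infty$: by Proposition \ref{prop:Indecroissante2}, item \ref{prop:Indecroissante_equiv}, $I^{n+1}(a) \sim c(n+1)a^{1/(n+1)}$ while $I^n(a)\sim cna^{1/n}$, and since $a^{1/(n+1)} = o(a^{1/n})$ the former is eventually strictly smaller, so $a_n < \infty$; also $a_n \geqslant 1$ and in fact $a_n > 1$ because on a right-neighborhood of $1$ all the $I^k$ coincide with $\Lambda_W^*$ (this uses Proposition \ref{prop:leftLD} / the remark that $I^k(a) = \Lambda_W^*(a)$ for $a\leqslant 1$ and continuity, or more directly that near $1$ the optimal choice in \eqref{eq:Inh} is $z=a$). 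Note for $c=\infty$ there is nothing to prove since then $I^n = \Lambda_W^*$ for all $n$ and one would phrase the statement vacuously or restrict to $c\in\intervalleoo{0}{\infty}$; I would handle that case with one sentence.

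For the monotonicity $a_n < a_{n+1}$, the key point is: \emph{if $I^{n+1}(a) = I^n(a)$ then $I^{n+2}(a) = I^{n+1}(a)$}, which gives $a_{n+1} \geqslant a_n$, and then one must upgrade to strict inequality. For the non-strict part: suppose $a \leqslant a_n$, so $I^{n+1} = I^n$ on $\intervalleff{1}{a}$ (using that $\{I^{n+1} < I^n\}$ is, by continuity of the $I^k$ from Proposition \ref{prop:Indecroissante2} item \ref{prop:Indecroissante_continuous} and monotonicity, an interval of the form $\intervalleof{a_n}{\infty}$ or $\intervallefo{a_n}{\infty}$ — I would pin down which, but either way $I^{n+1}=I^n$ on $[1,a_n)$). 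Then
\[
I^{n+2}(a) = \inf_{1\leqslant z\leqslant a}\bigl[I^{n+1}(z) + h(a,z)\bigr] = \inf_{1\leqslant z\leqslant a}\bigl[I^{n}(z) + h(a,z)\bigr] = I^{n+1}(a),
\]
since the infimum only uses values $I^{n+1}(z) = I^n(z)$ for $z\in\intervalleff{1}{a}\subseteq\intervalleff{1}{a_n}$. Hence $a \notin \{I^{n+2} < I^{n+1}\}$, so $a_{n+1} \geqslant a_n$. To get strictness, I would argue at $a = a_n$ itself: at $a_n$ we have $I^{n+1}(a_n) = I^n(a_n)$ (by the interval description and continuity), so the computation above with $a = a_n$ gives $I^{n+2}(a_n) = I^{n+1}(a_n)$, and by continuity of $I^{n+2} - I^{n+1}$ this strict-equality persists on a right-neighborhood of $a_n$, i.e. $I^{n+2} = I^{n+1}$ on $\intervallefo{a_n}{a_n + \delta}$ for some $\delta > 0$; therefore $a_{n+1} \geqslant a_n + \delta > a_n$. (One must check the infimum in \eqref{eq:Inh} for $I^{n+2}(a)$ with $a$ slightly above $a_n$ is still attained — or approached — at $z$ with $I^{n+1}(z) = I^n(z)$; this follows because the competitor $z = a$ already gives $I^{n+1}(a) + h(a,a) = I^{n+1}(a)$ using $h(a,a) = 0$, hmm — more carefully, one shows any near-optimal $z$ can be taken in $[1,a_n]$ for $a$ close enough to $a_n$, exploiting that $I^{n+1}(z) > I^{n+1}(a_n) = I^{n+2}(a_n)$ with a uniform gap for $z$ bounded away from $[1,a_n]$, plus continuity.)

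Finally, for divergence $a_n \to \infty$: suppose not, so $a_n \uparrow a_\infty < \infty$. Then for $a < a_\infty$ we have $a \leqslant a_n$ for all large $n$, hence $I^{n+1}(a) = I^n(a)$ for all large $n$, so the decreasing sequence $(I^n(a))_n$ is eventually constant, whence $I^\infty(a) = I^n(a)$ for $n$ large; in particular $I^\infty$ is finite and, for $a$ in a right-neighborhood of $1$, equals $\Lambda_W^*(a)$. But this must be reconciled with the asymptotics: for fixed $a$ just below $a_\infty$, $I^n(a) = I^N(a)$ for all $n \geqslant N(a)$, which forces $I^\infty(a) = I^{N(a)}(a) \geqslant I^\infty(a)$... that's circular, so instead I would derive a contradiction from the \emph{equivalent} \eqref{eq:Inh}: if $a_n \leqslant a_\infty$ for all $n$ then $I^n = I^{n+1}$ on $\intervalleff{1}{a_\infty'}$ for every $a_\infty' < a_\infty$ and $n$ large, so $I^\infty$ agrees with some fixed $I^{N}$ on each such interval, meaning $I^\infty = I^N$ near $a_\infty^-$; plug $a$ slightly below $a_\infty$ into $I^{N+1}(a) = \inf_{z}[I^N(z) + h(a,z)] = I^\infty(a)$ and compare with $I^\infty(a) = \inf_z[I^\infty(z) + h(a,z)] \leqslant \inf_z[I^N(z) + h(a,z)]$ using $I^\infty \leqslant I^N$ — I expect the cleanest route is to instead show directly that for \emph{every} $a > 1$ one has $I^{n+1}(a) < I^n(a)$ for $n$ large, by using \eqref{eq:Inh} with the explicit competitor $z = a^{n/(n+1)}$ as in the proof of Proposition \ref{prop:Indecroissante2} together with the strict inequality $h(a, a^{n/(n+1)}) \leqslant c a^{1/(n+1)}$ and the fact that $I^n(a^{n/(n+1)})$ is much smaller than $I^n(a) - ca^{1/(n+1)}$ once $n$ is large (both sides $\sim cna^{1/n}$ but the $h$-correction is only $O(a^{1/(n+1)})=o(1)\cdot$ nothing — this needs care). \textbf{I expect this divergence step — making the asymptotic/competitor estimate genuinely prove $I^{n+1}(a) < I^n(a)$ for all fixed $a>1$ and all large $n$, hence $a_n \to \infty$ — to be the main obstacle}, since it requires controlling the recursion more quantitatively than the mere equivalences of Proposition \ref{prop:Indecroissante2}; the monotonicity of $(a_n)$, by contrast, is a soft continuity-plus-nesting argument.
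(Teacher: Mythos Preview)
Your argument for $a_{n+1}\geqslant a_n$ is correct and matches the paper: on $\intervalleff{1}{a_n}$ one has $I^{n+1}=I^n$, so the recursion \eqref{eq:Inh} gives $I^{n+2}(a)=I^{n+1}(a)$ for $a\leqslant a_n$. But the rest has real gaps. For strict monotonicity, the sentence ``by continuity of $I^{n+2}-I^{n+1}$ this strict-equality persists on a right-neighborhood of $a_n$'' is simply false as stated (a continuous function vanishing at a point need not vanish nearby), and your attempted patch uses $h(a,a)=0$, which is wrong: from Lemma~\ref{lem:h}, for $a\geqslant a_W$ one has $h(a,a)=c-\Lambda_W(c/a)>0$. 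For divergence you correctly flag the obstacle but do not resolve it; the competitor $z=a^{n/(n+1)}$ together with the equivalences of Proposition~\ref{prop:Indecroissante2} does not give strict inequality for \emph{fixed} $a$ and large $n$, since both sides are $cn a^{1/n}+o(n)$ and the $o(n)$ is exactly what is at stake.

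The idea you are missing, and which the paper exploits, is a \emph{uniform positive lower bound on $h$}: for $a\geqslant a_W:=\inf\{a\geqslant1:aa^*\geqslant c\}$ and any $z\in\intervalleff{1}{a}$, one has $h(a,z)\geqslant h(a,a)=c-\Lambda_W(c/a)\geqslant c-\Lambda_W(c/a_W)=\Lambda_W^*(a_W)=:\rho>0$. Hence in $I^{n+1}(a)=\inf_{z}[I^n(z)+h(a,z)]$ the minimizing $z$ must satisfy $I^n(z)\leqslant I^{n+1}(a)-\rho\leqslant I^n(a)-\rho$, and by uniform continuity of $I^n$ on a compact $\intervalleff{1}{A+1}$ this forces $z\leqslant a-\eta(A)$ for some $\eta(A)>0$. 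Assuming $a_n\leqslant A$ for all $n$ and taking $a\leqslant a_{n-1}+\eta(A)$, the infimum then only sees $z\leqslant a_{n-1}$ where $I^n=I^{n-1}$, giving $I^{n+1}(a)=I^n(a)$ and hence $a_n\geqslant a_{n-1}+\eta(A)$; this contradicts boundedness and yields both divergence and (a posteriori, applying the same estimate with $A=a_n+1$) strict increase. Your parenthetical about ``near-optimal $z$ bounded away from $[a_n,a]$'' is groping toward this, but without the inequality $h\geqslant\rho$ you cannot make it quantitative.
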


\begin{proof}[Proof of Proposition \ref{prop:pt_rupture}]
Let $n \in \N^* \setminus \{ 1 \}$. For all $a \in \intervalleff{1}{a_{n-1}}$, 
\begin{align*}
I^{n+1}(a)
 & = \inf_{1 \leqslant z \leqslant a} \Bigl[ I^n(z) + h(a, z) \Bigr]  = \inf_{1 \leqslant z \leqslant a} \Bigl[ I^{n-1}(z) + h(a, z) \Bigr] =I^n(a)
\end{align*}
by \eqref{eq:Inh}. Thus $a_{n-1} \leqslant a_n$. Now, assume by contradiction that the sequence $(a_n)_{n \geqslant 1}$ is upper bounded by some $A > 0$. Let $a_W \defeq \inf \ensavec{a \geqslant 1}{a a^* \geqslant c}$. For all $a \geqslant a_W$ and $z \in \intervalleff{1}{a}$,
\begin{align*}
h(a, z)
 & \geqslant h(a, a)
 = c - \Lambda_W(c / a)
 \geqslant c - \Lambda_W(c / a_W)
 = \Lambda_W^*(a_W)
 \eqdef \rho
 > 0 .
\end{align*}

Since the function $I^n$ is uniformly continuous on $\intervalleff{1}{A}$ (by Proposition \ref{prop:Indecroissante2}, item \ref{prop:Indecroissante_continuous}), there exists $\eta(A) \in \intervalleoo{0}{1}$ such that, for all $(a, z) \in \intervalleff{1}{A+1}^2$ with $z \in \intervalleff{a - \eta(A)}{a}$, $I^n(z) > I^n(a) - \rho$ (note that it implies $\eta(A) < a_W - 1$). Then, for all $a \in \intervalleff{a_W}{A+1}$ and $z \in \intervalleff{a-\eta(A)}{a}$,
\begin{align*}
I^n(z) + h(a, z)
 & > (I^n(a) - \rho) + \rho
 = I^n(a)
 \geqslant I^{n+1}(a) .
\end{align*}

Hence, for all $a \leqslant a_{n-1} + \eta(A)$,
\begin{align*}
I^{n+1}(a)
 & = \inf_{1 \leqslant z \leqslant a} \Bigl[ I^n(z) + h(a, z) \Bigr]  & \text{(by \eqref{eq:Inh})}\\
 & = \inf_{1 \leqslant z \leqslant a - \eta(A)} \Bigl[ I^n(z) + h(a, z) \Bigr] \\
 & = \inf_{1 \leqslant z \leqslant a - \eta(A)} \Bigl[ I^{n-1}(z) + h(a, z) \Bigr] & \text{(since $z \leqslant a_{n-1}$)} \\
 & \geqslant I^n(a) ,
\end{align*}
hence $a \leqslant a_n$. So $a_{n-1} + \eta(A) \leqslant a_n$: this is in contradiction with the fact that, for all $n \in \N^*$, $a_n \leqslant A$, and the conclusion of the proposition follows.
\end{proof}

\subsection{Proof of Theorem \ref{th:finite}, item \ref{th:finiteMD} (moderate deviations)}

We proceed by induction over $n \in \N^*$. The result for $n = 1$ stems from the moderate deviation principle for a sum of independent and identically distributed random variables (see \cite[Theorem 3.7.1]{DZ98}). Now, let $n \in \N^*\setminus \{1\}$ be such that $(r^\alpha(Z_r^{n-1} - 1))_{r \geqslant 2}$ satisfies a large deviation principle at speed $(r^{1 - 2 \alpha})_{r \geqslant 2}$ with rate function $J$.

\medskip

\emph{Left deviations} --- Set, for $s < 0$, 
\[
\Lambda_r^n (s) = \frac{1}{r^{1-2\alpha}} \log \mathbb E \left[e^{ s r^{1-\alpha} (Z_r^n-1)} \right] .
\]
We will prove that 
\begin{equation}\label{eq:mod}
\Lambda_r^n (s) \xrightarrow[r \to \infty]{} \frac{s^2\Var(W)}{2} .
\end{equation}
Then, applying a unilateral version of the Gärtner-Ellis theorem (see \cite{PS75} or \cite[Theorem 10]{FATP2020}), we get
\[
\frac{1}{r^{1-2\alpha}} \log \Prob( r^\alpha (Z_r^n - 1) \leqslant - a)
 \xrightarrow[r \to \infty]{} - \frac{a^2}{2 \Var(W)} .
\]
By \eqref{eq:defZrnintro}, we have 
\begin{align}
Z_r^n
 & = \frac{1}{r} \sum_{i=1}^r W_{i} Z_{r,i}^{n-1}, \label{eq:E_finite}
\end{align}
where 
\begin{align*}
Z_{r,i}^{n-1}\defeq \frac{1}{r^{n-1}} \sum_{1\leqslant i_2, \dots, i_n\leqslant r} 
 W_{i,i_2} \cdots W_{i,i_2,\dots, i_n}
\end{align*}
is distributed as $Z_r^{n-1}$. By independence and identity of distributions, we have
\begin{align*}
\Lambda_r^n (s)
 & = r^{2\alpha-1} \log \Espe\left[e^{ s r^{-\alpha}\sum_{i=1}^r (W_i Z_{r,i}^{n-1}-1)}\right] 
 = r^{2\alpha} \log \Espe\left[e^{ s r^{-\alpha} (W Z_{r}^{n-1}-1)}\right] .
\end{align*}
Using the Taylor expansion $e^x = 1 + x + x^2 e^{\theta(x)} / 2$ with $\theta(x) \leqslant \max(x, 0)$ and the fact that $\Espe[W Z_r^n] = \Espe[W] \Espe[Z_r^n] = 1$, we get
\begin{align*}
\Lambda_r^n(s)
 & = r^{2\alpha} \log\Bigl(1 + \frac{s^2}{2 r^{2 \alpha}} \Espe\bigl[ (W Z_r^n - 1)^2 e^{\theta(s r^{-\alpha} (W Z_r^n - 1))} \bigr] \Bigr)
 \xrightarrow[r \to \infty]{} \frac{s^2}{2} \Espe[(W - 1)^2] ,
\end{align*}
which is \eqref{eq:mod}. Indeed,
\begin{align*}
\Bigl| \Espe\bigl[ & (W Z_r^n - 1)^2
  e^{\theta(s r^{-\alpha}  (W Z_r^n - 1))} \bigr] - \Espe[(W - 1)^2] \Bigr| \\
 & \leqslant \abs{\Espe\bigl[ (W Z_r^n - 1)^2 \bigl( e^{\theta(s r^{-\alpha} (W Z_r^n - 1))} - 1 \bigr) \bigr]} + \abs{\Espe\bigl[ (W Z_r^n - 1)^2 - (W - 1)^2 \bigr]} \\
 & \leqslant \abs{\Espe\bigl[ (W Z_r^n - 1)^2 \bigr]} \bigl( e^{- s r^{-\alpha}} - 1 \bigr) + \abs{\Espe\bigl[ (W Z_r^n - 1)^2 - (W - 1)^2 \bigr]}  \xrightarrow[r \to \infty]{} 0 ,
\end{align*}
since
\begin{align*} %\label{eq:moments_Zr2}
\Espe[(Z_r^{n-1})^2]
 & = \Espe\biggl[ \biggl( \frac{1}{r} \sum_{i=1}^r W_{i} Z_{r,i}^{n-2} \biggr)^2 \biggr] 
 = \frac{1}{r^2}\bigl( r \Espe[W^2] \Espe[(Z_r^{n-2})^2] + r (r - 1) \Espe[W]^2 \Espe[Z_r^{n-2}]^2 \bigr) 
  \xrightarrow[r \to \infty]{} 1 .
\end{align*}

\begin{remark}
Obviously, the previous estimate generalizes and the multinomial expansion yields
\begin{equation} \label{eq:moments_Zrn}
\forall h \in \N \quad \Espe[(Z_r^{n-1})^h] \xrightarrow[r \to \infty]{} 1 .
\end{equation}
Controlling the moments of $Z_r^n$ appears to be crucial in the study of the right deviations for $n = \infty$ (see Section \ref{sec:moments_Zr}). Here, mimicking the proof of the large deviations, we resort to induction to obtain the deviations in the finite tree.
%%%%%%%%%%%%%%%%%%%%%
% Autre preuve
%%%%%%%%%%%%%%%%%%%%%
%\begin{prop}\label{prop:moments_Zrn}
%For all $n\in \N^*$ and $h\in \N$, $\Espe[(Z_r^n)^h]\to 1$ as $r \to \infty$.
%\end{prop}
%
%
%\begin{proof}[Proof of Proposition \ref{prop:moments_Zrn}]
% By Lemma \ref{lem:convexity} item (i), one has
%\begin{align*}
%\Espe[(Z_r^n)^h] & = \frac{1}{r^h} \sum_{h_1+\dots+h_r=h}
%\binom{h}{h_1,\dots,h_r} \prod_{j=1}^r \Espe[ (WZ_{r}^{n-1} )^{h_j}]\leqslant  \Espe[ (WZ_{r}^{n-1})^h ]
%\end{align*}
%which is bounded by induction on $n$. The conclusion follows since $Z_r^n\to 1$ almost surely as $r\to \infty$ and applying \cite[Example 2.21]{van1998asymptotic}. 
%\end{proof}
\end{remark}

\medskip

\emph{Right deviations} --- Let us turn to the right deviations. Let $a \geqslant 0$. Using \eqref{eq:E_finite}, one gets
\[
\Prob(r^\alpha (Z_r^n - 1) \geqslant a)
 = \Prob\biggl( \sum_{i=1}^r (W_i Z_{r,i}^{n-1} - 1) \geqslant a r^{1-\alpha} \biggr) .
\]
Let us prove the lower bound. For all $\varepsilon > 0$,
\begin{align*}
\frac{1}{r^{1-2\alpha}} \log &\ \Prob\biggl( \sum_{i=1}^r (W_i Z_{r,i}^{n-1} - 1) \geqslant a r^{1-\alpha} \biggr) \\
 & \geqslant \frac{1}{r^{1-2\alpha}} \log \Prob\biggl( \sum_{i=1}^r (W_i Z_{r,i}^{n-1} - 1) \geqslant a r^{1-\alpha},\ \forall i \in \intervallentff{1}{r}\ Z_{r,i}^{n-1} \geqslant 1 - \varepsilon r^{-\alpha} \biggr) \\
 & \geqslant \frac{1}{r^{1-2\alpha}} \log \Prob\biggl( \sum_{i=1}^r (W_i (1 - \varepsilon r^{-\alpha}) - 1) \geqslant a r^{1-\alpha} \biggr) +  r^{2\alpha} \log \Prob(Z_r^{n-1} \geqslant 1 - \varepsilon r^{-\alpha}) .
\end{align*}
By induction (left deviations for $Z_r^{n-1}$),
\[
\frac{1}{r^{1-2\alpha}} \log \Prob(Z_r^{n-1} - 1 < - \varepsilon r^{-\alpha})
 \xrightarrow[r \to \infty]{} - J(-\varepsilon)
 < 0 ,
\]
hence
\[
r^{2\alpha} \log \Prob(Z_r^{n-1} \geqslant 1 - \varepsilon r^{-\alpha})
 \xrightarrow[r \to \infty]{} 0 .
\]
Now, since $\Espe[e^{sW}]<\infty$ for all $s<c$ (Proposition \ref{prop:Lambda}, item \ref{prop:Lambda_dom}), we can proceed as in the proof of \cite[Theorem 3.1.1]{DZ98} to apply the unilateral version of Gärtner-Ellis theorem and get
\begin{align}\label{eq:modW}
\frac{1}{r^{1-2\alpha}} \log \Prob\biggl( \sum_{i=1}^r (W_i (1 - \varepsilon r^{-\alpha}) - 1) \geqslant a r^{1-\alpha} \biggr)
  \xrightarrow[r \to \infty]{} - J(a + \varepsilon) .
\end{align}
Letting $\varepsilon \to 0$, one finally gets
\begin{align}
\liminf_{r \to \infty} \frac{1}{r^{1-2\alpha}} \log \Prob(r^\alpha (Z_r^n - 1) \geqslant a)
 \geqslant - J(a) .\label{eq:moderate_fini_minoration}
\end{align}

Let us turn to the upper bound. Let $\varepsilon > 0$. For all $r \in \N^* \setminus \{ 1 \}$ and for all $q\in \intervallentff{0}{r}$, we define
\begin{align*}
P_{n,r,q}
 & = \Prob\biggl( \sum_{i=1}^r (W_i Z_{r,i}^{n-1} - 1) \geqslant a r^{1-\alpha},\ \forall i \in \intervallentff{1}{q}\  Z_{r,i}^{n-1} - 1\geqslant \varepsilon r^{-\alpha}, \\
 & \hspace{6.5cm} \forall i \in \intervallentff{q+1}{r}\ Z_{r,i}^{n-1} - 1 < \varepsilon r^{-\alpha} \biggr).
\end{align*}
First, for $q_0\in \intervallentff{0}{r}$ and $\varepsilon' \in \intervalleoo{0}{\varepsilon}$, one has by induction
\begin{align}
\sum_{q=q_0}^r \binom{r}{q} P_{n,r,q}
 &\leqslant \sum_{q=q_0}^r \binom{r}{q} \Prob(Z_r^{n-1} - 1 \geqslant \varepsilon r^{-\alpha})^q\notag \\
 &\leqslant \sum_{q=q_0}^r \left(re^{-r^{1-2\alpha} J(\varepsilon')}\right)^q \notag\\
 &\leqslant \frac{\left(re^{-r^{1-2\alpha}J(\varepsilon')}\right)^{q_0}}{1-re^{-r^{1-2\alpha} J(\varepsilon')}} \notag\\
 &\leqslant e^{-r^{1-2\alpha} J(a)} ,\label{eq:moderate_fini_Pnrq0}
\end{align}
as soon as $q_0 J(\varepsilon') > J(a)$ and $r$ is large enough.
Second, for $q\in \intervallentff{0}{q_0-1}$, one has
\begin{align*}
\frac{1}{r^{1-2\alpha}} \log(P_{n, r, q})
 & \leqslant \frac{1}{r^{1-2\alpha}} \log \Prob\biggl( \sum_{i=1}^q (W_i Z_{r,i}^{n-1} - 1) + \sum_{i=q+1}^r (W_i (1 + \varepsilon r^{-\alpha}) - 1) \geqslant a r^{1-\alpha} \biggr). 
 %\label{eq:fini_moderate_Pnrq_0} 
\end{align*}
Now, for all $b \geqslant 0$, considering whether $r^{\alpha/2}(Z_r^{n-1} - 1)$ is larger than $1$ or not,
\begin{align*}
\Prob(W Z_r^{n-1} - 1 \geqslant b r^{1-\alpha})
 & = \Prob\biggl( \frac{W}{r^{1-\alpha/2}} r^{\alpha/2}(Z_r^{n-1} - 1) + \frac{W - 1}{r^{1-\alpha}} \geqslant b \biggr) \\
% & \leqslant \Prob\bigl( r^{\alpha/2}(Z_r^{n-1} - 1) \geqslant 1 \bigr) + \Prob\biggl( \frac{W}{r^{1-\alpha/2}} + \frac{W - 1}{r^{1-\alpha}} \geqslant b \biggr) \\
 & \leqslant \Prob\bigl( r^{\alpha/2}(Z_r^{n-1} - 1) \geqslant 1 \bigr) + \Prob\biggl( \frac{W}{r^{1-\alpha}} \geqslant \frac{b + r^{-(1-\alpha)}}{1 + r^{-\alpha/2}} \biggr),
\end{align*}
whence, by induction, using the large deviation principle for $(r^{\alpha/2}(Z_r^{n-1} - 1))_{r \geqslant 2}$, and assumption \eqref{eq:tail_W} on the tail of $W$, we get
\begin{equation}
\frac{1}{r^{1-2\alpha}} \log \Prob(W Z_r^{n-1} - 1 \geqslant b r^{1-\alpha}) \\
 \xrightarrow[r \to \infty]{} \begin{cases} 0 & \text{if $b = 0$} \\ -\infty & \text{otherwise.} \end{cases} \label{eq:fini_moderate_Pnrq_1}
\end{equation}
Moreover, applying the unilateral version of Gärtner-Ellis theorem, we get
\begin{align}
\frac{1}{r^{1-2\alpha}} \log \Prob\biggl( \sum_{i=q+1}^r (W_i (1 + \varepsilon r^{-\alpha}) - 1) \geqslant b r^{1-\alpha} \biggr)  \xrightarrow[r \to \infty]{} - J(a - \varepsilon) \label{eq:fini_moderate_Pnrq_2} .
\end{align}
So, applying the contraction principle to the continuous map $(y_1, \dots, y_q, s) \mapsto y_1 + \dots + y_q + s$ and
\[
\biggl(\frac{1}{r^{1 - \alpha}}\biggl( W_1 Z_{r,1}^{n-1} - 1, \dots, W_q Z_{r,q}^{n-1} - 1, \sum_{i=q+1}^r (W_i (1 + \varepsilon r^{-\alpha}) - 1) \biggr)\biggr)_{r \geqslant 2} ,
\]
and using \eqref{eq:fini_moderate_Pnrq_1} and \eqref{eq:fini_moderate_Pnrq_2}, one gets
\begin{align}
\limsup_{r \to \infty} \frac{1}{r^{1-2\alpha}} \log(P_{n, r, q})
 & \leqslant - J(a - \varepsilon) . \label{eq:fini_moderate_Pnrq_3}
\end{align}
%}

Applying the principle of the largest term to  \eqref{eq:moderate_fini_Pnrq0} and \eqref{eq:fini_moderate_Pnrq_3} and letting $\varepsilon \to 0$, one gets the desired upper bound:
\begin{align}
\limsup_{r \to \infty} \frac{1}{r^{1-2\alpha}} \log \Prob(r^\alpha (Z_r^n - 1) \geqslant a)
 \leqslant - J(a) .\label{eq:moderate_fini_majoration}
\end{align}
Finally, the large deviation principle stems from \eqref{eq:moderate_fini_minoration} and \eqref{eq:moderate_fini_majoration} and the fact that $J$ is decreasing on $\intervalleof{-\infty}{0}$ and increasing on $\intervallefo{0}{\infty}$.

\subsection{Proof of Theorem \ref{th:finite}, item \ref{th:finiteLD} (large deviations)} \label{sec:proof_finiteLD}

%New sketch of proof (to be written) :
%\begin{itemize}
%\item contraction version PGD;
%\item vérification que $I^n(a) = \Lambda_W^*(a)$ pour $a \leqslant 1$ (ou bien sur l'expression de $I^n$, ou via le PGD pour $a < 1$ et l'unicité de la fonction de taux).
%\end{itemize}
%
%
%\subsection{}
%
%
%
%
%
%\section{Old proof}
%
%Sketch of the proof:
%
%\begin{itemize}
%\item large deviations on the left: $\Prob(Z_r^n \leqslant a)$ for $a < 1$;
%\item large deviations on the right: $\Prob(Z_r^n \geqslant a)$ for $a > 1$;
%\item large deviation principle.
%\end{itemize}

The large deviations on the left for $(Z_r^{n})_{r\geqslant 2}$ are given in Proposition \ref{prop:leftLD}. Let us turn to the large deviations on the right. We proceed by induction. The case  $n = 1$ stems from Cramér's Theorem. In particular $I^1 = \Lambda_W^*$. Now let $n \in \N^*\setminus \{1\}$ and assume that $(Z_r^{n-1})_{r \geqslant 2}$ satisfies a large deviation principle at speed $(r)_{r \geqslant 2}$ with rate function $I^{n-1}$. Let $a > 1$ and $\varepsilon > 0$. Applying the contraction principle to the continuous map $(w, z, s) \mapsto w z + (1 - \varepsilon) s$ and $((W_1, Z_{r,1}^{n-1}, W_2+\dots+W_r)/r)_{r \geqslant 2}$, we get
\begin{align*}
\frac 1r \log
 &\, \Prob\left(Z_r^n \geqslant a \right) \\
 & \geqslant \frac 1r \log  \Prob\left(W_1 Z_{r,1}^{n-1} + \left(W_2+\dots+W_r\right)(1-\varepsilon)\geqslant ra,\ Z_{r,2}^{n-1},\dots, Z_{r,r}^{n-1} \geqslant 1-\varepsilon \right) \\
 & = \frac 1r \log \Prob\left(W_1 Z_{r,1}^{n-1} + \left(W_2+\dots+W_r\right)(1-\varepsilon)\geqslant ra\right)+ \frac{r-1}{r} \log \Prob\left( Z_r^{n-1} \geqslant 1-\varepsilon \right) \\
 & \xrightarrow[r \to \infty]{} - \inf \ensavec{cw + I^{n-1}(z) + \Lambda_W^*(s)}{w \geqslant 0,\ z \geqslant 0,\ s \geqslant 0,\  wz + (1-\varepsilon) s \geqslant a} \\
 & \geqslant - \inf \ensavec{cw + I^{n-1}(z) + \Lambda_W^*(s)}{w \geqslant 0,\ z\in \intervalleff{1}{a},\ s\in \intervalleff{1}{a},\  wz + (1-\varepsilon) s = a} \\
 %& = - \inf \ensavec{\frac{c(a - s(1 - \varepsilon))}{z} + I^{n-1}(z) + \Lambda_W^*(s)}{z\in \intervalleff{1}{a},\ s\in \intervalleff{1}{a},\ a - s(1 - \varepsilon) \geqslant 0} \\
 & = - \inf \ensavec{\frac{c(a - s(1 - \varepsilon))}{z} + I^{n-1}(z) + \Lambda_W^*(s)}{z\in \intervalleff{1}{a},\ s\in \intervalleff{1}{a}} .
\end{align*}
Hence,
\begin{align*}
\liminf_{r \to \infty} \frac 1r \log
 &\, \Prob\left(Z_r^n \geqslant a \right) \\
 & \geqslant - \inf_{\varepsilon > 0} \inf_{\substack{1 \leqslant z \leqslant a \\ 1 \leqslant s \leqslant a}} \biggl[\frac{c(a - s(1 - \varepsilon))}{z} + I^{n-1}(z) + \Lambda_W^*(s) \biggr] \\
 & = - \inf_{\substack{1 \leqslant z \leqslant a \\ 1 \leqslant s \leqslant a}} \biggl[\frac{c(a - s)}{z} + I^{n-1}(z) + \Lambda_W^*(s) + \inf_{\varepsilon > 0} \frac{c s \varepsilon}{z} \biggr] \\
 & = - \inf_{\substack{1 \leqslant z \leqslant a \\ 1 \leqslant s \leqslant a}} \biggl[\frac{c(a - s)}{z} + I^{n-1}(z) + \Lambda_W^*(s) \biggr] \\
 & = - \inf \ensavec{cw + I^{n-1}(z) + \Lambda_W^*(s)}{w \geqslant 0,\ z\in \intervalleff{1}{a},\ s \in \intervalleff{1}{a},\ wz + s = a} \\
 & = - I^n(a) ,
\end{align*}
by \eqref{eq:Inrecurrence2}.

\medskip

Now let us establish the upper bound. Let $\varepsilon > 0$. For all $r\in \N^* \setminus \{ 1 \}$ and for $q\in \intervallentff{0}{r}$, we define
\[
P_{n,r,q} \defeq \Prob\left(Z_r^n \geqslant a,\ Z_{r,1}^{n-1},\dots,Z_{r,q}^{n-1}\geqslant 1+\varepsilon,\ Z_{r,q+1}^{n-1},\dots, Z_{r,r}^{n-1} < 1+\varepsilon \right) ,
\]
so that 
\[
\Prob\left( Z_r^n \geqslant a \right)= \sum_{q=0}^r \binom{r}{q} P_{n,r,q} .
\]
For $q_0 \in \intervallentff{0}{r}$ and $\varepsilon' \in \intervalleoo{0}{\varepsilon}$, one has by induction
%\begin{align*}
%\sum_{q=q_0}^r \binom{r}{q} P_{n,r,q}
% &\leqslant \sum_{q=q_0}^r \binom{r}{q} \Prob(Z_r^{n-1} \geqslant 1+\varepsilon)^q \\
% &\leqslant \sum_{q=q_0}^r \left(re^{-r\Lambda_W^*(1+\varepsilon)}\right)^q \\
% &\leqslant \frac{\left(re^{-r\Lambda_W^*(1+\varepsilon)}\right)^{q_0}}{1-re^{-r\Lambda_W^*(1+\varepsilon)}} \\
% &\leqslant e^{-r I^n(a)}
%\end{align*}
\begin{align}
\sum_{q=q_0}^r \binom{r}{q} P_{n,r,q}
 &\leqslant \sum_{q=q_0}^r \binom{r}{q} \Prob(Z_r^{n-1} \geqslant 1+\varepsilon)^q \notag\\
 &\leqslant \sum_{q=q_0}^r \left(re^{-rI^{n-1}(1+\varepsilon')}\right)^q \notag\\
 &\leqslant \frac{\left(re^{-rI^{n-1}(1+\varepsilon')}\right)^{q_0}}{1-re^{-rI^{n-1}(1+\varepsilon')}} \notag\\
 &\leqslant e^{-r I^n(a)} ,\label{eq:large_fini_Pnrq0}
\end{align}
as soon as $q_0 I^{n-1}(1+\varepsilon') > I^n(a)$ (we use Proposition \ref{prop:Indecroissante1}, item \ref{prop:Indecroissante_positive}) and $r$ is large enough. Now,
\begin{align}
\lim_{r\to\infty} \frac 1r \log P_{n,r,0} \leqslant \lim_{r\to\infty} \frac 1r \log \Prob\Bigl(\sum_{i=1}^r W_i\geqslant \frac{ra}{1+\varepsilon}\Bigr)=  -\Lambda_W^* \Bigl(\frac{a}{1+\varepsilon}\Bigr) \leqslant - I^n\Bigl(\frac{a}{1+\varepsilon}\Bigr) .\label{eq:large_fini_Pnr0}
\end{align}
Finally, let $q\in \intervallentff{1}{q_0-1}$.  Applying the contraction principle to the continuous map $(y_1, \dots, y_q, s) \mapsto y_1 + \dots + y_q + s(1 + \varepsilon)$ and $((W_1 Z_{r,1}^{n-1}, \dots, W_q Z_{r,q}^{n-1}, W_{q+1}+\dots+W_r)/r)_{r \geqslant 2}$, we get
\begin{align*}
 \frac 1r \log P_{n,r,q}
 & \leqslant \frac 1r \log \Prob\left(W_1 Z_{r,1}^{n-1}+\dots+ W_q Z_{r,q}^{n-1}+\left(W_{q+1}+\dots+W_r\right)(1+\varepsilon) \geqslant ra, \right. \\
 & \hspace{4cm} \left. Z_{r,1}^{n-1},\dots,Z_{r,q}^{n-1}\geqslant 1+\varepsilon,\ Z_{r,q+1}^{n-1},\dots,Z_{r,r}^{n-1}< 1+\varepsilon \right) \\
 & \leqslant \frac 1r  \log \Prob\left(W_1 Z_{r,1}^{n-1}+\dots+ W_q Z_{r,q}^{n-1}+\left(W_{q+1}+\dots+W_r\right)(1+\varepsilon) \geqslant ra \right) \\
 & \xrightarrow[r\to \infty]{} -\inf \{g^{n-1}(y_1)+\dots+g^{n-1}(y_q)+\Lambda_W^*(s)\ ; \\
 & \hspace{3cm} y_1,\ \dots,\ y_q \geqslant 0,\ s \geqslant 0,\ y_1+\dots+y_q+(1+\varepsilon )s\geqslant a \} \\
% & = -\inf \ensavec{g^{n-1}(y)+\Lambda_W^*\left(\frac{a-y}{1+\varepsilon}\right)}{y \geqslant 0} ,
 & = -\inf \ensavec{g^{n-1}(y)+\Lambda_W^*(s)}{y \geqslant 0,\ s \geqslant 0,\ y + (1+\varepsilon)s \geqslant a}\\
  & \eqdef - B_\varepsilon ,
\end{align*}
where $g^{n-1}$ is the rate function of $(W Z_r^{n-1} / r)_{r \geqslant 2}$ at speed $(r)_{r \geqslant 2}$, that is the concave function defined by
\begin{align*}
g^{n-1}(y)
 & \defeq \inf \ensavec{cw + I^{n-1}(z)}{w \geqslant 0,\ z > 0,\ wz = y}  = \inf \ensavec{\frac {cy}{z} + I^{n-1}(z)}{z > 0} .
\end{align*}
Following the same lines as in the proof of Proposition \ref{prop:Indecroissante1}, item \ref{prop:Indecroissante_recurrence2}, we obtain that, for $\varepsilon \leqslant a - 1$,
\[
B_\varepsilon =  \inf \ensavec{cw + I^{n-1}(z) + \Lambda_W^*(s)}{w \geqslant 0,\ z \in \intervalleff{1}{a},\ s \in \intervalleff{1}{a/(1+\varepsilon)},\ wz + (1 + \varepsilon)s = a}.
\]
Hence,
\begin{align*}%\label{eq:large_fini_B_epsilon}
\limsup_{r \to \infty}
 \frac{1}{r} \log P_{n, r, q} 
\leqslant - \sup_{0 < \varepsilon \leqslant a-1}  B_\varepsilon.
% & = - \sup_{0 < \varepsilon < a-1} \inf \ensavec{cw + I^{n-1}(z) + \Lambda_W^*(s)}{w \geqslant 0,\ z \geqslant 1,\ s \geqslant 1,\ wz + s(1 + \varepsilon) = a} . 
\end{align*}

If $c=\infty$, then, since $\Lambda_W^*$ is left continuous at $a$,
\begin{align*}
\limsup_{r \to \infty} \frac{1}{r} \log P_{n, r, q}
 \leqslant - \sup_{\varepsilon > 0} \Lambda_W^*\Bigl(\frac{a}{1+\varepsilon}\Bigr)
 = - \Lambda_W^*(a)
 = - I^n(a).
\end{align*}
Assume now that $c < \infty$. Then,
\begin{align}
\limsup_{r \to \infty}
 \frac{1}{r} \log P_{n, r, q} & \leqslant - \sup_{0<\varepsilon \leqslant a-1} \inf_{\substack{1 \leqslant z \leqslant a,\\ 1\leqslant s\leqslant \frac{a}{1+\varepsilon}}}\Bigl[ \frac {c(a-s)}{z} + I^{n-1}(z)+\Lambda_W^*(s)-\frac{c\varepsilon s}{z}\Bigr] \notag\\
% & \leqslant - \sup_{0<\varepsilon \leqslant a-1} \inf_{\substack{1 \leqslant z \leqslant a,\\ 1\leqslant s\leqslant \frac{a}{1+\varepsilon}}}\Bigl[ \frac {c(a-s)}{z} + I^{n-1}(z)+\Lambda_W^*(s)-\frac{c\varepsilon a}{1+\varepsilon}\Bigr] \notag\\
 & \leqslant - \sup_{0<\varepsilon \leqslant a-1} \inf_{\substack{1 \leqslant z \leqslant a\\1 \leqslant s \leqslant a}}\Bigl[ \frac {c(a-s)}{z} + I^{n-1}(z)+\Lambda_W^*(s)-c\varepsilon a\Bigr]
 %\quad \text{(since $z\geqslant 1$ and $s \leqslant a$)}
 \notag\\
 & = - \inf_{\substack{1 \leqslant z \leqslant a\\1 \leqslant s \leqslant a}}\Bigl[ \frac {c(a-s)}{z} + I^{n-1}(z)+\Lambda_W^*(s)\Bigr] \notag\\
 %& = - \inf_{\substack{w \geqslant 0,\\ 1\leqslant z\leqslant a}}\Bigl[ c w + I^{n-1}(z)+\Lambda_W^*(a - wz)\Bigr]. \notag
 & = - \inf \ensavec{c w + I^{n-1}(z)+\Lambda_W^*(s)}{w \geqslant 0,\ z \in \intervalleff{1}{a},\ s \in \intervalleff{1}{a},\ wz + s = a} \notag \\
 & = - I^n(a) \label{eq:large_fini_Pnrq}
\end{align}
by \eqref{eq:Inrecurrence2}. We conclude by applying the principle of the largest term to \eqref{eq:large_fini_Pnrq0}, \eqref{eq:large_fini_Pnr0}, and \eqref{eq:large_fini_Pnrq}.

%%%%%%%%%%%%%%%%%%%%%%
%\newpage
%
%\begin{prop}
%For $ y\geqslant 0$, one has
%\[
%\lim_{n\to+\infty} \frac 1n \log \Prob\left(X_1\Snbar{1} \geqslant ny\right) = -\left(\sqrt{1+4y}-\log  (1 + \sqrt{1+4y}) + \log 2 - 1 \right)\eqdef -q(y)\, .
%\]
%\end{prop}
%
%Notice that the function $q$ is concave.
%
%\begin{proof}
%Applying the contraction principle,
%\begin{align*}
%\frac 1n \log \Prob\left(X_1\Snbar{1} \geqslant ny\right) & = \frac 1n \log \Prob\left(X_1 \Sn{1} \geqslant n^2y\right)\\
%&\longrightarrow -\inf_{s>0} \left[\frac ys +\Lambda_W^*(s)\right]\\
%&= -\inf_{s>0} \left[\frac ys +s-\log s-1\right].
%\end{align*}
%The infimum is attained at $s=(1 + \sqrt{1+4y})/2$ from which the result follows.
%\end{proof}

%\section{Generalization to $Z_r^n$}
%
%Cf. notation article Alain (2004).
%
%
%\begin{thm}
%For $ a\geqslant 1$, for $n \in \N^*$, one has
%\[
%\lim_{r\to+\infty} \frac 1r \log \Prob\left(Z_r^n \geqslant a\right) = -\inf_{0\leqslant y\leqslant a} \bigl[ q_n(y)+\Lambda_W^*(a-y) \bigr] \eqdef -I^n(a)\, ,
%\]
%where $q_n$ is defined by induction by
%\[
%q_n(y)\defeq \inf_{s>0} \Bigl[ \frac ys+I^{n-1}(s) \Bigr].
%\]
%
%
%\end{thm}

\subsection{Proof of Theorem \ref{th:finite}, item \ref{th:finiteVL} (very large deviations)}

\emph{Left deviations} ---
%By the law of large numbers, for $a>0$, $\Prob(Z_r^n \leqslant r^\alpha a) \to 1$ as $r \to \infty$ and,
For $a<0$, $\Prob(Z_r^n\leqslant r^\alpha a) = 0$. Assume now that $a=0$. Let $p \defeq \Prob(W=0)$ and, for all $r \in \N^* \setminus \{ 1 \}$ and $n \in \N^*$, let $q_r^n \defeq \Prob(Z_r^n \leqslant 0) = \Prob(Z_r^n = 0)$. Obviously, $q_r^1 = p^r$ and, using \eqref{eq:E_finite},
\begin{align*}
q_r^n
 & =\Prob(Z_r^n = 0)
 = \Prob(W Z_r^{n-1} = 0)^r
 = (1 - \Prob(W \neq 0) \Prob(Z_r^{n-1} \neq 0))^r
 = (p + (1-p) q_r^{n-1})^r .
\end{align*}
If $p = 0$, then $q_r^n = 0$ for all $r \in \N^* \setminus \{ 1 \}$ and $n \in \N^*$. Otherwise, $0 < p < 1$ (recall that $\Espe[W]=1$ implies $p < 1$), $r^{-1} \log(q_r^1) = \log(p)$ and one proves by induction that
\[
\frac{1}{r} \log(q_r^n)
 = \log(p + (1-p) q_r^{n-1})
 \xrightarrow[r \to \infty]{} \log(p) .
\]
So,
\begin{align*}
- \frac{1}{r^{1+\alpha/n}} \log \Prob(Z_r^n \leqslant r^\alpha a)
 \xrightarrow[r \to \infty]{} \begin{cases}
\infty & \text{if $a < 0$ or ($a=0$ and $p = 0$)} \\
0 & \text{if ($a = 0$ and $p > 0$).} % or $a>0$
\end{cases}
\end{align*}
% Remark:
% \begin{align*}
% -\frac 1r \log \Prob(Z_r^n\leqslant r^\alpha a)
%  \xrightarrow[r \to \infty]{} \begin{cases}
% 0 & \text{for $a>0$}\\
% -\log p & \text{for $a=0$}\\
% \infty & \text{for $a<0$.}
% \end{cases}
% \end{align*}

\emph{Right deviations} ---  The case $a = 0$ is obvious. Let $a > 0$. Let us prove the minoration. For all $k \in \N^*$, we introduce the notation $1^k$ to represent the word $1, 1, \dots, 1$ of length $k$. One has
\begin{align*}
\frac{1}{r^{1+\alpha/n}}\log \Prob(Z_r^n\geqslant r^\alpha a)
 & \geqslant \frac{1}{r^{1+\alpha/n}}\log \Prob\Bigl(\frac{W_{1^1} \cdots W_{1^n}}{r^n}\geqslant r^\alpha a\Bigr)\\
 & \geqslant \frac{n}{r^{1+\alpha/n}} \log \Prob\Bigl(W\geqslant r( r^\alpha a)^{1/n}\Bigr)\\
 & \xrightarrow[r \to \infty]{} - c n  a^{1/n} .%\label{eq:very_fini_minoration}
\end{align*}

As for the majoration, we proceed by induction. The case $n=1$ corresponds to the very large deviations for i.i.d.\ random variables. Since \eqref{eq:tail_W} is satisfied, we may apply \cite[Proposition 1.1 and Remark 1.1]{broniatowski1994extended} to obtain 
\[
\log \Prob(Z_r^1\geqslant r^\alpha a) \sim - r \Lambda_W^*(r^\alpha a)\sim -c r ^{1+\alpha} a,
\] 
as $r \to \infty$ where the last estimate stems from Proposition \ref{prop:Lambda}, item \ref{prop:Lambda_star}. Then we follow the same lines as in the upper bound for the large and moderate deviations. Let $\varepsilon > 0$.  For all $r\in \N^* \setminus \{ 1 \}$ and for $q\in \intervallentff{0}{r}$, we define
\[
P_{n,r,q} \defeq \Prob\left(Z_r^n \geqslant r^\alpha a,\ Z_{r,1}^{n-1},\dots,Z_{r,q}^{n-1}\geqslant r^{\alpha(n-1)/n} \varepsilon,\ Z_{r,q+1}^{n-1},\dots, Z_{r,r}^{n-1} < r^{\alpha(n-1)/n} \varepsilon \right) ,
\]
so that 
\[
\Prob\left( Z_r^n \geqslant r^\alpha a \right)= \sum_{q=0}^r \binom{r}{q} P_{n,r,q} .
\]
For $q_0 \in \intervallentff{0}{r}$ and $c' \in \intervalleoo{0}{c}$, one has by induction
\begin{align*}
\sum_{q=q_0}^r \binom{r}{q} P_{n,r,q}
 & \leqslant \sum_{q=q_0}^r \binom{r}{q} \Prob(Z_r^{n-1} \geqslant r^{\alpha(n-1)/n} \varepsilon)^q  \\
 & \leqslant \sum_{q=q_0}^r \bigl(re^{-r^{1+\alpha/n}c' (n-1)  \varepsilon^{1/(n-1)}}\bigr)^q  \\
 & \leqslant \frac{\bigl( re^{-r^{1+\alpha/n} c' (n-1) \varepsilon^{1/(n-1)}}\bigr)^{q_0}}{1 - re^{-r^{1+\alpha/n} c'(n-1)  \varepsilon^{1/(n-1)}}} \\
 & \leqslant e^{-r^{1+\alpha/n} c' n  a^{1/n}} ,
\end{align*}
as soon as $(n-1) q_0 \varepsilon^{1/(n-1)} > n a^{1/n}$ and $r$ is large enough. Hence 
\begin{align}\label{eq:very_fini_Pnrq0} 
\limsup_{r\to \infty} \frac{1}{r^{1+\alpha/n}}\log \sum_{q=q_0}^r \binom{r}{q} P_{n,r,q}\leqslant -c n  a^{1/n}.
\end{align}
Now,
\begin{align*}
P_{n,r,0}
 = \Prob\left(\sum_{i=1}^r W_iZ_{r,i}^{n-1} \geqslant r^{1+\alpha} a,\ Z_{r,1}^{n-1},\dots, Z_{r,r}^{n-1} < r^{\alpha(n-1)/n} \varepsilon \right)
 \leqslant \Prob\left(\sum_{i=1}^r W_i \geqslant r^{1+\alpha/n} \frac{a}{\varepsilon} \right).
\end{align*}
Using the exponential Chebyshev inequality, for all $t \in \intervalleoo{0}{c}$, one has
\begin{align}
\limsup_{r \to \infty} \frac{1}{r^{1+\alpha/n}} \log \Prob\left(\sum_{i=1}^r W_i \geqslant r^{1+\alpha/n} \frac{a}{\varepsilon} \right)
 & \leqslant \limsup_{r \to \infty} \frac{1}{r^{1+\alpha/n}} (- t r^{1+\alpha/n} a / \varepsilon + r \Lambda_W(t)) \notag \\
 %\leqslant  - \frac{ta}{\varepsilon} + \limsup_{r \to \infty} \frac{1}{r^{\alpha/n}} \Lambda_W( t )\label{eq:very_fini_Pnr0}
 & = - \frac{ta}{\varepsilon}. \label{eq:very_fini_Pnr0}
\end{align}
Hence, for $\varepsilon > 0$ small enough, one gets
\[
\limsup_{r \to \infty} \frac{1}{r^{1+\alpha/n}} \log P_{n,r,0}
 \leqslant - c n a^{1/n}.
\]

Finally, let $q\in \intervallentff{1}{q_0-1}$. One has
\begin{align*}
P_{n,r,q}
 & \leqslant \Prob\left(\sum_{i=1}^q W_iZ_{r,i}^{n-1}+ \sum_{i=q+1}^r W_i r^{\alpha(n-1)/n} \varepsilon \geqslant r^{1+\alpha} a \right) .
\end{align*}
We want to apply the contraction principle to the continuous map $(y_1, \dots, y_q, s) \mapsto y_1 + \dots + y_q + s$ and
\[
\biggl(\frac{1}{r^{1+\alpha}}\biggl( W_1 Z_{r, 1}^{n-1}, \dots, W_q Z_{r, q}^{n-1}, \sum_{i=q+1}^r W_i r^{\alpha(n-1)/n} \varepsilon \biggr) \biggr)_{r \geqslant 2} .
\]
As for the first $q$ variables, applying the contraction principle to the continuous map $(w, z) \mapsto w z$ and $(W / r^{1 + \alpha / n}, Z_r^{n-1} / r^{\alpha (n-1) / n})_{r \geqslant 2}$, we get by induction
\begin{align*}
\frac{1}{r^{1+\alpha/n}}\log \Prob( W Z_{r}^{n-1} \geqslant r^{1+\alpha} y)
 & \xrightarrow[r \to \infty]{} -\inf \ensavec{c w + (n-1) c z^{1/(n-1)}}{w \geqslant 0,\ z > 0,\ wz = y} \\
 & = -\inf \ensavec{\frac{cy}{z} + (n-1) c z^{1/(n-1)}}{z > 0} \\
 & = - c n y^{1/n}
\end{align*}
(the infimum is attained at $z=y^{(n-1)/n}$). Hence, applying the contraction principle and \eqref{eq:very_fini_Pnr0}, we get
\begin{align}
\frac{1}{r^{1+\alpha/n}} \log P_{n,r,q}
 & \leqslant -c \inf\Bigl\{  n(y_1^{1/n}+\dots+y_q^{1/n})+\frac{s}{\varepsilon};\, y_1+\dots+y_q+s=a \Bigr\} 
  = -cn a^{1/n} \label{eq:very_fini_Pnrq} .
\end{align}
Applying the principle of the largest term to \eqref{eq:very_fini_Pnrq0}, \eqref{eq:very_fini_Pnr0}, and \eqref{eq:very_fini_Pnrq}, one gets
\begin{align*}
\limsup_{r \to \infty} \frac{1}{r^{1+\alpha/n}} \log \Prob(Z_r^n \geqslant r^{\alpha} a)
 \leqslant - cn a^{1/n} .%\label{eq:very_fini_majoration}
\end{align*}

\section{Infinite tree}\label{sec:proofinfinite}

The heart of the proofs in the finite tree is the recursion formula \eqref{eq:E_finite} for $Z_r^n$ which allows to proceed by induction. However, in the infinite tree, the analogue of \eqref{eq:E_finite} is the fixed point equation \eqref{eq:E_infinite} and there is no more recursion. From now on, we set $Z_r \defeq Z_r^\infty$ for all $r\in \N^* \setminus \{ 1 \}$. By \cite[Théorèmes 1 et 3]{kahane1976}, if $\esssup(W) = \infty$ and $r > \exp \Espe[W \log(W)]$, then
\[
\forall t > 0 \quad
\Espe[e^{t Z_r}] = \infty .
\]
Therefore, under assumption \eqref{eq:tail_W} and $\esssup(W) = \infty$ (see Table \ref{table:summary}), the standard exponential Markov inequality cannot be used to prove the upper bounds. Here, we bypass this problem by bounding the moments of $Z_r$. For the moderate and very large deviations, we optimize the bound given by Markov inequality over the moments to derive the exact upper bound. As for the large deviations, we prove that (see Corollary \ref{cor:moments}, item \ref{cor:moments_grands})
\[
\limsup_{r \to \infty} \frac{1}{r} \log \Espe[Z_r^{\eta r}] = O(\eta^2)
\]
which is the key argument to prove that the rate function is non degenerate and is indeed the limit of the rate functions in the finite trees (see Theorem \ref{th:finite}, item \ref{th:finiteLD}).

\subsection{Upper bounds for the moments of \texorpdfstring{$Z_r$}{Zr}} \label{sec:moments_Zr}

Let us turn to the control of the moments of $Z_r$. Using the fact that positive martingales converge almost surely, we may let $n \to \infty$ in \eqref{eq:E_finite} to get
\[
Z_r = \frac 1r \sum_{i=1}^r W_i Z_{r, i} \quad \text{a.s.}
\]
where the random variables $Z_{r, 1}$, ..., $Z_{r, r}$, $W_1$, ..., $W_r$ are independent, the $W_i$ (resp.\ $Z_{r, i}$) having the same distribution as $W$ (resp.\ $Z_r$). It is proved in \cite[Théorème 1]{kahane1976} that, for
\begin{equation} \label{eq:r_grand}
r > \exp \Espe[W \log(W)],
\end{equation}
$Z_r$ is the unique solution $Z$ of \eqref{eq:E_infinite} such that $\Espe[Z] = 1$. Moreover, under \eqref{eq:r_grand}, by \cite[Théorème 2]{kahane1976}, $\Espe[Z_r^h] < \infty$ if and only if $\Espe[W^h] < r^{h-1}$, which is equivalent to $h < \chi(r)$ for
\[
\chi(r) \defeq \sup \ensavec{h \in \intervallefo{1}{\infty}}{\Espe[W^h] < r^{h-1}} .
\]

% OLD Proposition
% \begin{prop}\label{prop:momk}
% For all $\varepsilon \in \intervalleoo{0}{c}$, there exists $h_0 \in \N$ such that
% \begin{align*}%\label{eq:momk}
% \forall h \geqslant h_0 \quad
% \frac{1}{(c+\varepsilon)^h} \leqslant
% \frac{\Espe[W^h]}{h!} \leqslant \frac{1}{(c-\varepsilon)^h} .
% \end{align*}
% \end{prop}

In the sequel, we always consider values of $r$ which satisfy \eqref{eq:r_grand}. Moreover, for all $(a, b) \in \R^2$, we denote by $\intervallentff{a}{b}$ the set of integers between $a$ and $b$, i.e.\ $\enstq{h \in \Z}{a \leqslant h \leqslant b}$. Similarly, with obvious definition, we may also use the variants $\intervallentfo{a}{b}$, etc.

\begin{prop}[Moments of $W$] \label{prop:moments_W}
Assume \eqref{eq:tail_W}.
\begin{enumerate}
\item \label{prop:moments_W_momk} One has
\[
\frac{1}{h} \log \frac{\Espe[W^h]}{h!} \xrightarrow[h \to \infty]{} - \log(c).
\]

\item \label{prop:moments_W_chi} One has
\begin{equation*} %\label{eq:chi_limit}
\frac{\chi(r)}{r} \xrightarrow[r \to \infty]{} ce .
\end{equation*}
%In particular
%\[
%\forall \eta \in \intervalleoo{0}{ce} \quad \exists r_0 \geqslant 2 \quad \forall r \geqslant r_0 \quad \eta r < \chi(r)
%\]

\item \label{prop:moments_W_terme_inverse} For all $\eta \in \intervalleoo{0}{ce}$, for all $r$ large enough,
\[
\sup_{h \in \intervallentff{2}{\eta r}} \frac{\Espe[W^h]}{r^{h-1}}
 = \frac{\Espe[W^2]}{r}.
\]
\end{enumerate}
\end{prop}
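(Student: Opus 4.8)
The plan is to prove the three items of Proposition \ref{prop:moments_W} in order, using the tail assumption \eqref{eq:tail_W} together with the identity $\Espe[W^h] = \int_0^\infty \Prob(W^h \geqslant u)\,du = h\int_0^\infty w^{h-1}\Prob(W \geqslant w)\,dw$. For item \ref{prop:moments_W_momk}, I would use \eqref{eq:tail_W} to sandwich $\Prob(W \geqslant w)$ between $e^{-(c+\varepsilon)w}$ and $e^{-(c-\varepsilon)w}$ for $w$ large, and compare $\Espe[W^h]$ to $\int_0^\infty h w^{h-1} e^{-(c\pm\varepsilon)w}\,dw = h!/(c\pm\varepsilon)^h$; the contribution of the region where $w$ is bounded is negligible after taking $\frac1h\log(\cdot)$ and letting $h\to\infty$. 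Taking $\frac1h\log$ of the estimate $h!/(c+\varepsilon)^h \leqslant \Espe[W^h] \leqslant C_\varepsilon + h!/(c-\varepsilon)^h$ and sending $\varepsilon\to0$ gives the claimed limit $-\log(c)$ (with the convention $\log(\infty)=\infty$ when $c=\infty$, consistent with item \ref{prop:Lambda_dom}).

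For item \ref{prop:moments_W_chi}, recall $\chi(r) = \sup\{h\geqslant1 : \Espe[W^h] < r^{h-1}\}$. Writing the condition $\Espe[W^h] < r^{h-1}$ as $\frac1h\log\Espe[W^h] < \frac{h-1}{h}\log r$, and using item \ref{prop:moments_W_momk} in the form $\frac1h\log\Espe[W^h] = \log h - 1 + o(1) - \log c$ (via Stirling, $\frac1h\log h! = \log h - 1 + o(1)$), the threshold $h = \chi(r)$ is asymptotically characterized by $\log(\chi(r)/c) - 1 \approx \log r$, i.e. $\chi(r)/r \to ce$. I would make this rigorous by showing: for $\eta < ce$, every integer $h \leqslant \eta r$ satisfies $\Espe[W^h] < r^{h-1}$ for $r$ large (so $\chi(r) \geqslant \eta r$), and for $\eta > ce$, every integer $h \geqslant \eta r$ fails it for $r$ large (so $\chi(r) \leqslant \eta r$); the monotonicity needed comes from the fact that $h \mapsto \frac1h\log\frac{\Espe[W^h]}{h!}$ converges and $h \mapsto \log h$ is increasing.

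For item \ref{prop:moments_W_terme_inverse}, fix $\eta \in \intervalleoo{0}{ce}$ and consider the sequence $u_h \defeq \Espe[W^h]/r^{h-1}$ for $h \in \intervallentff{2}{\eta r}$; I want to show it is maximized at $h=2$ when $r$ is large. The ratio is $u_{h+1}/u_h = \Espe[W^{h+1}]/(r\,\Espe[W^h])$. By item \ref{prop:moments_W_momk} and the elementary bound relating consecutive moments (or directly, $\Espe[W^{h+1}]/\Espe[W^h]$ behaves like $(h+1)/c$ up to subexponential corrections uniformly as $h\to\infty$), this ratio is of order $h/(cr)$, which for $h \leqslant \eta r$ with $\eta < ce$ stays bounded away from $1$ — more precisely $\leqslant \eta/(ce)\cdot(1+o(1)) < 1$ — for all $h$ in the range, once $r$ is large; the small values of $h$ (bounded, near $2$) are handled by the same ratio being $<1$ there because $\Espe[W^{h+1}]/\Espe[W^h]$ is a fixed finite number while $r\to\infty$. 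Hence $u_h$ is strictly decreasing in $h$ over the whole range for $r$ large, giving $\sup_{h\in\intervallentff{2}{\eta r}} u_h = u_2 = \Espe[W^2]/r$.

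The main obstacle is making the comparison in item \ref{prop:moments_W_terme_inverse} \emph{uniform} in $h$ over the growing range $\intervallentff{2}{\eta r}$: item \ref{prop:moments_W_momk} only gives asymptotics of $\frac1h\log\Espe[W^h]$, so I need the consecutive-ratio bound $\Espe[W^{h+1}] \leqslant \frac{h+1}{c-\varepsilon}\Espe[W^h]$ (or a clean variant) to hold for \emph{all} $h$ above some fixed $h_0$, not just for $h$ large depending on $r$. This can be extracted by repeating the integral estimate: $\Espe[W^{h+1}] = (h+1)\int_0^\infty w^h\Prob(W\geqslant w)\,dw \leqslant (h+1)\int_0^\infty w^h e^{-(c-\varepsilon)w}\,dw + (\text{bounded correction}) = \frac{(h+1)!}{(c-\varepsilon)^{h+1}} + O(1)$, and similarly a lower bound for $\Espe[W^h]$, so the ratio is controlled uniformly for $h \geqslant h_0(\varepsilon)$; the finitely many $h < h_0$ are trivial since $r\to\infty$. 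Choosing $\varepsilon$ small enough that $\eta < (c-\varepsilon)e$ closes the argument.
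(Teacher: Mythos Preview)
Your arguments for items \ref{prop:moments_W_momk} and \ref{prop:moments_W_chi} are essentially the paper's: sandwich the tail, integrate, apply Stirling, and test $\Espe[W^{\floor{\eta r}}]/r^{\floor{\eta r}-1}$ against $1$ for $\eta$ on either side of $ce$.

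The gap is in item \ref{prop:moments_W_terme_inverse}. Your claim that $u_h \defeq \Espe[W^h]/r^{h-1}$ is \emph{decreasing} on $\intervallentff{2}{\eta r}$ is false in general when $\eta \in \intervalleoo{c}{ce}$. The ratio you compute is $u_{h+1}/u_h = \Espe[W^{h+1}]/(r\,\Espe[W^h]) \sim (h+1)/(cr)$, since $\Espe[W^h] \sim h!/c^h$ from item \ref{prop:moments_W_momk}; there is no extra factor of $e$ here, and your bound ``$\leqslant \eta/(ce)\cdot(1+o(1))$'' should read $\eta/c\cdot(1+o(1))$. For $h$ near $\eta r$ with $\eta > c$ this ratio exceeds $1$, so $u_h$ is \emph{increasing} there. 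The uniform consecutive-ratio bound you propose, $\Espe[W^{h+1}] \leqslant \frac{h+1}{c-\varepsilon}\Espe[W^h]$, even if established, gives $u_{h+1}/u_h \leqslant (h+1)/((c-\varepsilon)r)$, which is again $>1$ for $h$ close to $\eta r > cr$.

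The paper sidesteps this by observing that $h \mapsto \log(\Espe[W^h]/r^{h-1})$ is \emph{convex} (Hölder), so on any interval the maximum of $u_h$ is attained at an endpoint:
\[
\sup_{h \in \intervallentff{2}{\eta r}} u_h = \max\Bigl\{ \frac{\Espe[W^2]}{r},\ \frac{\Espe[W^{\floor{\eta r}}]}{r^{\floor{\eta r}-1}} \Bigr\}.
\]
One then only needs to check that the right endpoint is $o(1/r)$, which follows from the Stirling estimate already used in item \ref{prop:moments_W_chi} (with $\eta < (c-\varepsilon)e$). This avoids any uniform control on consecutive moment ratios and works for the full range $\eta \in \intervalleoo{0}{ce}$.
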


\begin{proof}[Proof of Proposition \ref{prop:moments_W}] \leavevmode
\begin{enumerate}
\item Let $0 < \varepsilon' < \varepsilon < c$ and let $A$ be such that, for all $w\geqslant A$,
\[
\Prob(W > w)\leqslant e^{-(c-\varepsilon') w}.
\]
Then, integrating by parts,
\begin{align*}
\Espe[W^h]
% & = \int_0^{\infty} \Prob(W^h > t) dt \\
% & \leqslant A + \int_A^\infty \Prob(W^h > t) dt \\ 
 & \leqslant A + \int_A^\infty e^{-(c-\varepsilon') t^{1/h}} dt
 = A + h \int_{A^{1/h}}^\infty e^{-(c-\varepsilon') w} w^{h-1} dw \\
% & = A - h \left[\frac{w^{h-1} e^{-(c-\varepsilon') w}}{c-\varepsilon'} \right]_{A^{1/h}}^\infty + \frac{h(h-1)}{c-\varepsilon'} \int_{A^{1/h}}^\infty e^{-(c-\varepsilon') w} w^{h-2} dw
 & = A + \frac{h A^{(h-1)/h}}{c-\varepsilon'} e^{-(c-\varepsilon') A^{1/h}} + \frac{h(h-1)}{c-\varepsilon'} \int_{A^{1/h}}^\infty e^{-(c-\varepsilon') w} w^{h-2} dw.
\end{align*}
By induction, we get
\begin{align*}
\Espe[W^h]
 & \leqslant A + \frac{h!}{(c-\varepsilon')^h} e^{-(c-\varepsilon') A^{1/h}} \sum_{l=0}^{h-1} \frac{((c-\varepsilon') A^{1/h})^l}{l!}
 \leqslant A + \frac{h!}{(c-\varepsilon')^h}
% & = \frac{h!}{(c-\varepsilon)^h} \biggl( \frac{A (c - \varepsilon)^h}{h!} + \frac{(c - \varepsilon)^h}{(c - \varepsilon')^h} \biggr)
 = o \biggl( \frac{h!}{(c-\varepsilon)^h} \biggr)
\end{align*}
as $h \to \infty$, whence the result.
Similarly, the lower bound stems from
\begin{align*}
\Espe[W^h] & \geqslant \int_A^\infty e^{-(c+\varepsilon) t^{1/h}}dt.
\end{align*}

\item Let $\eta > 0$. It suffices to prove that, for all $r$ large enough,
\[
\frac{\Espe[W^{\floor{\eta r}}]}{r^{\floor{\eta r} - 1}} \begin{cases}
< 1 & \text{for $\eta < ce$} \\
> 1 & \text{for $\eta > ce$}.
\end{cases}
\]
If $\eta < ce$, let $\varepsilon > 0$ be such that $\eta < (c - \varepsilon) e$. Using item \ref{prop:moments_W_momk} and Stirling bounds (see \cite[Section 3]{Artin_1964_Gamma})
\begin{equation}\label{eq:stirling}
\forall h \in \intervalleoo{0}{\infty} \quad
\left(\frac{h}{e}\right)^h \sqrt{2\pi h}\leqslant \Gamma(h+1) \leqslant \left(\frac{h}{e}\right)^h \sqrt{2\pi h} e^{\frac{1}{12h}},
\end{equation}
we get, for $r$ large enough,
\begin{equation} \label{eq:mom_eta_r}
\frac{\Espe[W^{\floor{\eta r}}]}{r^{\floor{\eta r}-1}}
 \leqslant \frac{\floor{\eta r}!}{r^{\floor{\eta r}-1}(c - \varepsilon)^{\floor{\eta r}}}
 \leqslant \biggl( \frac{\floor{\eta r}}{(c - \varepsilon) e r} \biggr)^{\floor{\eta r}} r \sqrt{2 \pi \floor{\eta r}}\ e^{\frac{1}{12 \floor{\eta r}}}
 < 1.
\end{equation}
Same argument for $\eta > ce$.

\item Let $\eta \in \intervalleoo{0}{ce}$. Since the function $h \mapsto \Espe[W^h] / r^{h-1}$ is log-convex (see \cite[p.\ 132]{kahane1976}), if $\eta r \geqslant 2$, then
\[
\sup_{h \in \intervallentff{2}{\eta r}} \frac{\Espe[W^h]}{r^{h-1}}
 = \max \biggl\{ \frac{\Espe[W^2]}{r}, \frac{\Espe[W^{\floor{\eta r}}]}{r^{\floor{\eta r}-1}} \biggr\}.
\]
Let $\varepsilon > 0$ be such that $\eta < (c - \varepsilon) e$. The conclusion stems from the fact that \eqref{eq:mom_eta_r} yields besides
\[
\frac{\Espe[W^{\floor{\eta r}}]}{r^{\floor{\eta r}-1}}
 = o\Bigl( \frac{1}{r} \Bigr). \qedhere
\]
\end{enumerate}
\end{proof}

Now, it is proved in \cite[Equation (13)]{kahane1976} that, for all $r > \exp \Espe[W \log(W)]$, we have the following recursion formula:
\begin{equation} \label{eq:kahane_peyriere}
\forall h \in \intervallentfo{2}{\chi(r)} \quad
\Espe[Z_r^h]
 = \frac{h!}{r^h \bigl( 1 - \frac{\Espe[W^h]}{r^{h-1}}\bigr)}\sum_{\substack{h_1+\dots+h_r=h,\\ 0 \leqslant h_j\leqslant h-1}} \prod_{j=1}^r \frac{\Espe[W^{h_j}] \Espe[Z_r^{h_j}]}{h_j!} .
\end{equation}

Let us turn to bounds on the moments of $Z_r$. First, we consider the small moments.

\begin{prop} \label{prop:Zrh_rec}
For all $\delta > 0$, there exists $\eta \in \intervalleoo{0}{1 \wedge ce}$ such that, for all $r$ large enough,
\[
\forall h \in \intervallentff{1}{\eta r} \quad \Espe[Z_r^h] \leqslant \exp\biggl\{ \frac{h^2}{2 (r - h)} \biggl( \Var(W) + \delta + \frac{C}{h} \biggr) \biggr\} ,
\]
with $C = 2 \Espe[W^2] + 13/12$.
\end{prop}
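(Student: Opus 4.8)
The plan is to prove the bound $\Espe[Z_r^h] \leqslant \exp\{ \frac{h^2}{2(r-h)}(\Var(W) + \delta + C/h)\}$ by strong induction on $h \in \intervallentff{1}{\eta r}$, using the Kahane--Peyrière recursion \eqref{eq:kahane_peyriere}. First I would fix $\delta > 0$ and choose $\eta \in \intervalleoo{0}{1 \wedge ce}$ small enough that several constraints (to be discovered along the way) are met; crucially, by Proposition \ref{prop:moments_W}, item \ref{prop:moments_W_terme_inverse}, for such $\eta$ and $r$ large, $\sup_{2 \leqslant h \leqslant \eta r} \Espe[W^h]/r^{h-1} = \Espe[W^2]/r$, so the prefactor $1/(1 - \Espe[W^h]/r^{h-1})$ in \eqref{eq:kahane_peyriere} is bounded by $1/(1 - \Espe[W^2]/r) = r/(r - \Espe[W^2])$, uniformly in $h$. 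The base case $h = 1$ is immediate since $\Espe[Z_r] = 1$ and the right-hand side exceeds $1$.

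For the induction step, fix $h \in \intervallentff{2}{\eta r}$ and assume the bound holds for all smaller exponents. In the sum over compositions $h_1 + \dots + h_r = h$ with $0 \leqslant h_j \leqslant h-1$, at most $h$ of the $h_j$ are nonzero. I would split each term according to the set $S$ of indices $j$ with $h_j \geqslant 1$: the number of compositions with a given support pattern of size $|S| = k$ is $\binom{r}{k}$ times the number of compositions of $h$ into $k$ positive parts, and the product factorizes as $\prod_{j \in S} \Espe[W^{h_j}]\Espe[Z_r^{h_j}]/h_j!$. Using the induction hypothesis $\Espe[Z_r^{h_j}] \leqslant \exp\{\frac{h_j^2}{2(r - h_j)}(\Var(W) + \delta + C/h_j)\}$ and Proposition \ref{prop:moments_W}, item \ref{prop:moments_W_momk} (which controls $\Espe[W^{h_j}]/h_j!$, essentially by $c^{-h_j}$ up to subexponential factors, hence is harmless for $\eta$ small), together with the combinatorial identity $h! \sum_{h_1 + \dots + h_k = h, h_j \geqslant 1} \prod 1/h_j! = \binom{h}{h_1, \dots}$-type counting (Stirling-type bounds), the plan is to show that the dominant contribution comes from $k = h$ with all $h_j = 1$, i.e.\ the "diagonal" term, which contributes $\binom{r}{h} (\Espe[W])^h (\Espe[Z_r])^h \cdot h!/r^h \cdot \frac{h!}{h! \cdot \text{(nothing)}}$—and this is where the factor $\frac{h^2}{2(r-h)}\Var(W)$ must emerge, so the computation has to be done carefully rather than crudely.

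The heart of the matter is therefore to estimate $\frac{h!}{r^h} \sum \prod_{j} \frac{\Espe[W^{h_j}]\Espe[Z_r^{h_j}]}{h_j!}$ and show it is at most $(1 - \Espe[W^2]/r)\exp\{\frac{h^2}{2(r-h)}(\Var(W) + \delta + C/h)\}$, so that multiplying by the prefactor gives exactly the claimed bound. I would organize this by writing, for each composition, $\binom{r}{k} \leqslant r^k/k!$ and bounding $h!/r^h \cdot r^k = h!/r^{h-k}$; the parts with $h_j = 1$ contribute $\Espe[W]\Espe[Z_r] = 1$ each, and the parts with $h_j \geqslant 2$ are where the $\Var(W)$ and the $C/h$ corrections are absorbed — noting that a single part equal to $2$ contributes $\Espe[W^2]/2$ and that the "all ones" term is special because it has no denominator $h_j!$ penalty. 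I expect that after collecting terms one reduces to verifying an inequality of the shape $\sum_{k} (\text{correction})^{h-k} \binom{h}{k}$-type $\leqslant \exp\{\ldots\}$, which follows by recognizing a binomial expansion or by the elementary bound $1 + x \leqslant e^x$ applied to $h/(r-h)$-sized quantities, and by choosing $\eta$ small enough that all error terms (from Stirling, from Proposition \ref{prop:moments_W}, and from the geometric-type sums over $k$) are swallowed into $\delta$ and $C/h$.

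The main obstacle I anticipate is the bookkeeping that isolates the precise coefficient $\Var(W) = \Espe[W^2] - 1$ rather than, say, $\Espe[W^2]$: the "$-1$" comes from the interplay between the diagonal term (which is forced to $1$ by $\Espe[W] = \Espe[Z_r] = 1$) and the prefactor $1/(1 - \Espe[W^2]/r)$, and getting this cancellation right — so that the constant $C = 2\Espe[W^2] + 13/12$ is exactly what remains, with the $13/12$ traceable to the Stirling correction $e^{1/(12h)}$ in \eqref{eq:stirling} — will require a careful, term-by-term accounting of every inequality used. The other delicate point is ensuring all the implied constants and subexponential factors are genuinely uniform in $h \in \intervallentff{2}{\eta r}$; this is exactly why Proposition \ref{prop:moments_W}, items \ref{prop:moments_W_momk} and \ref{prop:moments_W_terme_inverse}, were proved first, and why $\eta$ must be chosen after $\delta$.
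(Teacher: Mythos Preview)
Your overall architecture --- strong induction on $h$ via \eqref{eq:kahane_peyriere}, with $\eta$ chosen small after $\delta$, and the base case $h=1$ trivial --- is exactly the paper's. But the proposal is vague precisely where the real work lies, and your guess about where the $-1$ in $\Var(W)=\Espe[W^2]-1$ comes from is wrong, which signals a genuine gap.

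The paper does not organize by support size. It passes to multiplicities $m_k=\#\{j:h_j=k\}$ and then rescales each $k\geqslant 2$ factor by exactly $h^k/(r-h)^{k-1}$. The constraints $\sum_k m_k=r$ and $\sum_k k m_k=h$ force $m_0-(r-h)=\sum_{k\geqslant 2}(k-1)m_k$ and $h-m_1=\sum_{k\geqslant 2}k m_k$, so this rescaling is exactly absorbed by the elementary bounds $(r-h)^{m_0-(r-h)}/m_0!\leqslant 1/(r-h)!$ and $h^{-(h-m_1)}/m_1!\leqslant 1/h!$. After dropping the remaining constraints on $m_2,\dots,m_{h-1}$ one obtains a product of exponential series, yielding
\[
\exp\biggl\{\sum_{k=2}^{h-1}\frac{\Espe[W^k]\Espe[Z_r^k]h^k}{k!(r-h)^{k-1}}\biggr\}.
\]
This factorization is the step your sketch does not contain; the crude bound $\binom{r}{k}\leqslant r^k/k!$ you propose loses too much to recover the sharp constant.

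As for the constant: the $-1$ does \emph{not} come from the prefactor $(1-\Espe[W^h]/r^{h-1})^{-1}$ --- that contributes only the $2\Espe[W^2]$ piece of $C$ (via $(1-x)^{-1}\leqslant e^{2x}$). It comes from the Stirling estimate \eqref{eq:stirling} applied to $r!/((r-h)!r^h)$, which yields a factor bounded by $\exp\{\frac{h^2}{2(r-h)}(-1+\frac{13}{12h}+\frac{h}{r-h})\}$; this is where the $13/12$ lives. The $+\Espe[W^2]$ then arises from the $k=2$ term of the exponential sum above (with $\Espe[Z_r^2]$ controlled near $1$ by the induction hypothesis), and the terms $k\geqslant 3$ are absorbed into $\delta$ via a geometric series, using Proposition~\ref{prop:moments_W}, item~\ref{prop:moments_W_momk}, to bound $\Espe[W^k]/k!$, and choosing $\eta$ small enough.
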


\begin{proof}[Proof of Proposition \ref{prop:Zrh_rec}]
Let $\delta > 0$. Let us fix some $\eta \in \intervalleoo{0}{1 \wedge ce}$. By Proposition \ref{prop:moments_W}, item \ref{prop:moments_W_terme_inverse}, there exists $r_0 > \exp \Espe[W \log(W)]$ such that
\begin{equation} \label{eq:majoration_momemnt_W}
\forall r \geqslant r_0 \quad
\sup_{h \in \intervallentff{2}{\eta r}} \frac{\Espe[W^h]}{r^{h-1}}
 = \frac{\Espe[W^2]}{r}
 \leqslant \frac{1}{2}.
\end{equation}
In particular, for all $r \geqslant r_0$, $\eta r < \chi(r)$. Now, let us fix $r \geqslant r_0$. For all $h \geqslant 1$, denote by $E(h)$ the statement:
\[
\Espe[Z_r^h] \leqslant \exp\biggl\{ \frac{h^2}{2 (r - h)} \biggl( \Var(W) + \delta + \frac{C}{h} \biggr) \biggr\}
\]
and let us prove $E(h)$ by induction for $h \in \intervallentff{1}{\eta r}$. Since $\Espe[Z_r] = 1$, $E(1)$ is obvious. Now, in case $\eta r \geqslant 2$, let $h \in \intervallentff{2}{\eta r}$ and assume that $E(k)$ is satisfied for all $k \in \intervallentff{1}{h-1}$. From \eqref{eq:kahane_peyriere} and the fact that $h \leqslant \eta r < \chi(r)$, one has
\begin{align}
\Espe[Z_r^h]
 & = \frac{h!}{r^h \bigl( 1 - \frac{\Espe[W^h]}{r^{h-1}}\bigr)} \sum_{\substack{m_0+\dots+m_{h-1}=r,\\ m_1+\dots+(h-1)m_{h-1}=h}} \binom{r}{m_0,\dots,m_{h-1}} \prod_{k=0}^{h-1} \biggl( \frac{\Espe[W^k] \Espe[Z_r^k]}{k!} \biggr)^{m_k} \notag \\
 & = \frac{h! r!}{r^h \bigl( 1 - \frac{\Espe[W^h]}{r^{h-1}}\bigr)} \sum_{\substack{m_0+\dots+m_{h-1}=r,\\ m_1+\dots+(h-1)m_{h-1}=h}} \prod_{k=0}^{h-1} \frac{1}{m_k!} \biggl( \frac{\Espe[W^k] \Espe[Z_r^k]}{k!} \biggr)^{m_k} , \label{eq:Zrh}
\end{align}
where $m_k$ stands for the number of $h_j$ equal to $k$ (multiplicity of $k$ in the composition $(h_1,\dots,h_r)$ of $h$). Then, the latter sum above is equal to
\begin{align}
 & \sum_{\substack{m_0+\dots+m_{h-1}=r,\\ m_1+\dots+(h-1)m_{h-1}=h}} \hspace{-0.7cm} \frac{(r-h)^{m_0-(r-h)}}{m_0!} \cdot \frac{h^{-(h-m_1)}}{m_1!} \prod_{k=2}^{h-1} \frac{1}{m_k!}\left(\frac{\Espe[W^k] \Espe[Z_r^k] h^k}{k! (r-h)^{k-1}}\right)^{m_k} \notag \\
 & \leqslant \sum_{\substack{m_0+\dots+m_{h-1}=r,\\ m_1+\dots+(h-1)m_{h-1}=h}} \frac{1}{(r-h)!} \cdot \frac{1}{h!} \prod_{k=2}^{h-1} \frac{1}{m_k!}\left(\frac{\Espe[W^k] \Espe[Z_r^k] h^k}{k! (r-h)^{k-1}}\right)^{m_k} \notag \\
 %& = \frac{1}{(r-h)! h!} \sum_{\substack{m_0+\dots+m_{h-1}=r,\\
%m_1+\dots+(h-1)m_{h-1}=h}}  \prod_{k=2}^{h-1} \frac{1}{m_k!}\left(\frac{\Espe[W^k] \Espe[Z_r^k] h^k}{k! (r-h)^{k-1}}\right)^{m_k} \notag \\
 & \leqslant \frac{1}{(r-h)! h!} \sum_{m_2,\dots,m_{h-1}\geqslant 0}  \prod_{k=2}^{h-1} \frac{1}{m_k!}\left(\frac{\Espe[W^k] \Espe[Z_r^k] h^k}{k! (r-h)^{k-1}}\right)^{m_k} \notag \\
 & = \frac{1}{(r-h)! h!} \exp\left\{\sum_{k=2}^{h-1} \frac{\Espe[W^k] \Espe[Z_r^k] h^k}{k! (r-h)^{k-1}} \right\} \label{eq:maj_Zrh_somme} .
% & = \frac{1}{(r-h)! h!} \exp\left\{\frac{h^2}{r-h} \left( \frac{\Espe[W^2]}{2} \Espe[Z_r^2]+ \sum_{l=1}^{h-3} \frac{\Espe[W^{l+2}]}{(l+2)!} \Espe[Z_r^{l+2}]  \left(\frac{h}{r-h} \right)^l\right) \right\} . 
\end{align}
Besides, using Stirling bounds \eqref{eq:stirling} and $\log(1+x) \leqslant x-x^2/2+x^3/3$ for all $x \geqslant 0$, we have
\begin{align}
\frac{r!}{(r-h)! r^h}
 & \leqslant \left(\frac{r}{r-h}\right)^{r-h+1/2} \exp\left\{-h+\frac{1}{12r}\right\} \notag \\
 & \leqslant \exp\left\{\left(r-h+\frac 12\right) \left(\frac {h}{r-h} -\frac 12 \left(\frac{h}{r-h}\right)^2+\frac 13 \left(\frac{h}{r-h}\right)^3\right) -h + \frac{1}{12r}\right\} \notag \\
 & \leqslant \exp\left\{\frac{h^2}{2(r-h)} \left(-1+\frac 1 h +\frac{2h}{3(r-h)} + \frac{h}{3(r-h)^2} + \frac{r-h}{6rh^2}\right)\right\} \notag \\
 & \leqslant \exp\left\{\frac{h^2}{2(r-h)} \left(-1 + \frac{13}{12 h} + \frac{h}{r-h} \right) \right\} . \label{eq:maj_Zrh_prefacteur}
\end{align}
Combining \eqref{eq:Zrh}, \eqref{eq:maj_Zrh_somme}, \eqref{eq:maj_Zrh_prefacteur}, and \eqref{eq:majoration_momemnt_W} together with $(1 - x)^{-1} \leqslant \exp(2x)$ for $x \in \intervalleff{0}{1/2}$, we get
\begin{align}
\Espe[Z_r^h]
 & \leqslant \exp\left\{\frac{h^2}{2(r-h)} \left(-1 + \frac{13}{12 h} + \frac{h}{r-h} \right) + \frac{2 \Espe[W^2]}{r} + \sum_{k=2}^{h-1} \frac{\Espe[W^k] \Espe[Z_r^k] h^k}{k! (r-h)^{k-1}} \right\} \notag \\
 & \leqslant \exp\left\{\frac{h^2}{2(r-h)} \left(-1 + \frac{C}{h} + \frac{h}{r-h} + 2 \sum_{k=2}^{h-1} \frac{\Espe[W^k] \Espe[Z_r^k]}{k!} \biggl( \frac{h}{r-h} \biggr)^{k-2} \right) \right\} \label{eq:maj_Zrh_total}
\end{align}
with $C = 2 \Espe[W^2] + 13/12$. By Proposition \ref{prop:moments_W}, item \ref{prop:moments_W_momk}, there exists $A > 0$ such that, for all $k \in \N$,
\[
\frac{\Espe[W^k]}{k!} \leqslant A^k .
\]
For all $\tilde{\eta} \in \intervalleoo{0}{1}$, let
\[
B(\tilde{\eta}) \defeq \exp \biggl\{ \frac{\tilde{\eta}}{2 (1 - \tilde{\eta})} \biggl( \Var(W) + \delta + \frac{C}{2} \biggr) \biggr\}.
\]
Assume now that $\eta$ is small enough that
\begin{align}
\frac{\eta}{1 - \eta} + \Espe[W^2] (B(\eta)^2 - 1) + 2 A^2 B(\eta)^2 \frac{A B(\eta) \frac{\eta}{1 - \eta}}{1 - A B(\eta) \frac{\eta}{1 - \eta}} < \delta . \label{eq:choix_eta}
\end{align}

% Let $r \geqslant 2 \vee (1/\eta)$. Denote by $E(h)$ the statement:
% \[
% \Espe[Z_r^h] \leqslant \exp\biggl\{ \frac{h^2}{2 (r - h)} \biggl( \Var(W) + \delta + \frac{C}{h} \biggr) \biggr\}
% \]
% and let us prove $E(h)$ by induction for $h \geqslant 2$. Since $\Espe[Z_r] = 1$, $E(1)$ is obvious. Now assume that $E(k)$ is satisfied for all $k \in \intervallentff{1}{h-1}$.

Using \eqref{eq:maj_Zrh_total} and noting that $h / (r - h) \leqslant \eta / (1 - \eta)$, one gets
\begin{align*}
\Espe[Z_r^h]
 & \leqslant \exp \biggl\{ \frac{h^2}{2 (r - h)} \biggl( -1 + \frac{C}{h} + \frac{\eta}{1 - \eta} + \Espe[W^2] B(\eta)^2 + 2 \sum_{k=3}^{h-1} A^k B(\eta)^k \Bigl( \frac{\eta}{1 - \eta} \Bigr)^{k-2} \biggr) \biggr\} \\
% & \leqslant \exp \biggl\{ \frac{h^2}{2 (r - h)} \biggl( \Var(W) + \frac{C}{h} + \frac{\eta}{1 - \eta} + \Espe[W^2] (B(\eta)^2 - 1) + 2 A^2 B(\eta)^2 \frac{A B(\eta) \frac{\eta}{1 - \eta}}{1 - A B(\eta) \frac{\eta}{1 - \eta}} \biggr) \biggr\} \\
 & \leqslant \exp\biggl\{ \frac{h^2}{2 (r - h)} \biggl( \Var(W) + \delta + \frac{C}{h} \biggr) \biggr\} ,
\end{align*}
bounding above the finite geometric sum by the value of the infinite sum, and using \eqref{eq:choix_eta}.
\end{proof}

The following estimate concerns the higher moments of $Z_r$.

\begin{prop}\label{prop:gros_moment}
For all $\eta \in \intervalleoo{0}{ce}$,
\[
\limsup_{r\to \infty} \frac 1r \log\Espe[Z_r^{\eta r}] <\infty .
\]
\end{prop}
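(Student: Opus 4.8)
The plan is to prove the (stronger) claim that for each fixed $\eta \in \intervalleoo{0}{ce}$ there is a constant $D = D(\eta) < \infty$ with $\Espe[Z_r^h] \leqslant D^h$ for all integers $h \leqslant \floor{\eta r}$ and all $r$ large enough; the proposition then follows with $h = \floor{\eta r}$ (the non-integer case reducing to it by log-convexity of $h \mapsto \log \Espe[Z_r^h]$). First I would fix $A > 1/c$ close enough to $1/c$ that $\eta < e/A$; by Proposition \ref{prop:moments_W}, item \ref{prop:moments_W_momk}, the constant $C_A \defeq \sup_{k \geqslant 0} \Espe[W^k]/(A^k k!)$ is then finite, so $\Espe[W^k] \leqslant C_A A^k k!$ for every $k$. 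Next, applying Proposition \ref{prop:Zrh_rec} (with, say, $\delta = 1$) and shrinking the threshold it provides, I would fix $\eta_0 \in \intervalleoo{0}{\eta \wedge 1 \wedge ce}$ and $K_0 < \infty$ such that, for $r$ large, $\Espe[Z_r^k] \leqslant e^{K_0 k}$ for all $k \leqslant \eta_0 r$; put $\beta \defeq A e^{K_0}$.

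I would then establish $\Espe[Z_r^h] \leqslant D^h$ by strong induction on $h \leqslant \floor{\eta r}$. For $h \leqslant \eta_0 r$ it holds as soon as $D \geqslant e^{K_0}$. For $\eta_0 r < h \leqslant \floor{\eta r}$ I would use the Kahane--Peyrière recursion \eqref{eq:kahane_peyriere}, which applies because $\Espe[W^h]/r^{h-1} \leqslant \Espe[W^2]/r \leqslant \tfrac12$ for $r$ large (Proposition \ref{prop:moments_W}, item \ref{prop:moments_W_terme_inverse}), so $h < \chi(r)$ and $(1 - \Espe[W^h]/r^{h-1})^{-1} \leqslant 2$. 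With $a_k \defeq \Espe[W^k] \Espe[Z_r^k]/k!$ (so $a_0 = a_1 = 1$) the combinatorial sum in \eqref{eq:kahane_peyriere} is $\sum \prod_j a_{h_j}$ over $h_1 + \dots + h_r = h$, $h_j \leqslant h-1$. Since at most $Q \defeq \ceil{\eta/\eta_0}$ of the parts can exceed $\eta_0 r$, I would split this sum according to the set $J$ of positions carrying such ``large'' parts: the large parts are controlled by the induction hypothesis, $\prod_{j \in J} a_{h_j} \leqslant C_A^{|J|}(AD)^{\sum_{j \in J} h_j}$, while the block of ``small'' parts ($\leqslant \eta_0 r$) on the other positions uses only the base case, $a_k \leqslant C_A \beta^k$, so is bounded by $[x^m]\psi_1(x)^{r-|J|}$ with $m$ its total and $\psi_1(x) \defeq 1 + x + C_A \beta^2 x^2/(1-\beta x)$ for $0 < x < 1/\beta$ --- a function of $\eta$ only, not of $D$.

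Summing over the $\mathrm{poly}(r)$ choices of $J$ and of the large-part values, using $[x^m]\psi_1(x)^{r'} \leqslant \psi_1(x)^{r'} x^{-m}$ optimized over $x$, together with Stirling's formula for $h!/r^h$, I would reach, with $t \defeq h/r \in \intervalleff{\eta_0}{\eta}$,
\[
\frac1r \log \Espe[Z_r^h] \leqslant t \log t - t + \max_{0 \leqslant \tau \leqslant t} \Bigl[ \tau \log(AD) + \Phi(t-\tau) \Bigr] + o(1),
\]
where $\Phi(u) \defeq \inf_{0 < x < 1/\beta}[\log \psi_1(x) - u \log x]$ is concave with $\Phi(0) = 0$, $\Phi'(0^+) = +\infty$, and $\Phi(u) = u \log \beta + \log u + O(1)$ as $u \to \infty$. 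For $D$ large the inner maximum is attained at $t - \tau = u^\ast \defeq (\Phi')^{-1}(\log(AD)) \in \intervalleoo{0}{\eta_0}$ and equals $t \log(AD) + \Phi^\#(\log(AD))$, where $\Phi^\#(\xi) \defeq \sup_{u \geqslant 0}[\Phi(u) - u \xi]$ is finite for $\xi > \log \beta$ and tends to $0$ as $\xi \to \infty$. Hence
\[
\frac1r \log \Espe[Z_r^h] - t \log D \leqslant t \log\!\bigl( tA/e \bigr) + \Phi^\#\!\bigl( \log(AD) \bigr) + o(1),
\]
and since $\eta < e/A$ the quantity $\mu_0 \defeq - \sup_{t \in [\eta_0,\eta]} t \log(tA/e)$ is strictly positive; choosing $D$ large enough that $\Phi^\#(\log(AD)) < \mu_0$ (which also forces $D > e^{K_0}$, so the base case holds) makes the right-hand side negative for $r$ large, closing the induction, and then $\frac1r \log \Espe[Z_r^{\floor{\eta r}}] \leqslant \eta \log D$.

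\textbf{Main obstacle.} The crux is the self-consistency of the induction. At leading exponential order the Stirling factor $h!/r^h$ and the combinatorial sum cancel, so the surviving bound on $\frac1r \log \Espe[Z_r^h]$ is governed by the conjugate $\Phi^\#(\log(AD))$ of the \emph{$D$-independent} small-part generating function, to be offset against the deficit $t \log(e/(tA))$ coming from the prefactor. The hypothesis $\eta \in \intervalleoo{0}{ce}$ enters precisely as $\eta A < e$ --- which is why $A$ must be taken arbitrarily close to $1/c$ (Proposition \ref{prop:moments_W}, item \ref{prop:moments_W_momk}) --- guaranteeing this deficit is uniformly positive on $\intervalleff{\eta_0}{\eta}$. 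Secondary care is needed to verify that the large-part block is genuinely negligible once summed against the inductive $(AD)^\sigma$ bound --- it gets absorbed by the small-part block through the free choice of the contour $x$ --- and that the $\mathrm{poly}(r)$ and $o(1)$ errors do not destroy the strict inequality, which is why one leaves room by imposing $\Phi^\#(\log(AD)) < \mu_0$ rather than merely $\leqslant \mu_0$.
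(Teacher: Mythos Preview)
Your plan is correct and follows a genuinely different route from the paper's proof.

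Both arguments start from the Kahane--Peyri\`ere recursion \eqref{eq:kahane_peyriere} and bootstrap from the small-moment bound of Proposition \ref{prop:Zrh_rec}, but they organize the reduction differently. The paper splits the sum according to whether some part exceeds $\theta h$ (for a suitably chosen $\theta \in (1/2,1)$) and, using log-convexity of $(h_1,\dots,h_r) \mapsto \log \prod_j \Espe[Z_r^{h_j}]$, collapses the product to a power of $\Espe[Z_r^{\theta h}]$ in the ``all-small'' case or $\Espe[Z_r^{h-1}]$ in the ``one-large'' case; this yields the two-term recursion $\Espe[Z_r^h] \leqslant e^{C_1 r}\Espe[Z_r^{\theta h}]^\beta + \tfrac12 \Espe[Z_r^{h-1}]$, which is iterated first at scale $1$ and then geometrically at scale $\theta$ until one lands in the base regime. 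You instead fix an \emph{absolute} threshold $\eta_0 r$, note that at most $Q = \lceil \eta/\eta_0 \rceil$ parts can exceed it, and close a single strong induction with the uniform bound $D^h$: the small-part block is handled by a generating-function / saddle-point estimate that is \emph{independent of $D$}, and the large-part block (at most $Q$ factors) is absorbed through the Legendre conjugate $\Phi^\#(\log(AD))$, which can be made as small as desired by taking $D$ large.

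Your approach makes the appearance of the threshold $ce$ more transparent --- it is precisely the condition $\eta A < e$ with $A$ close to $1/c$, which ensures the Stirling deficit $\mu_0 = -\sup_{t\in[\eta_0,\eta]} t\log(tA/e)$ is strictly positive --- and it avoids the paper's two-scale iteration and the somewhat ad hoc verification that $C(1-\theta) < 0$. The paper's route, on the other hand, is more elementary (no saddle-point analysis), and its explicit constants feed directly into the $\kappa(\eta) = O(\eta^2)$ estimate of Corollary \ref{cor:moments}; your argument also gives $\limsup \frac{1}{r}\log\Espe[Z_r^{\eta r}] \leqslant \eta\log D(\eta)$, but extracting the $O(\eta^2)$ behaviour as $\eta \to 0$ would require tracking how $D(\eta)$ can be chosen for small $\eta$ (it can: for small $\eta$ one may keep $\eta_0$ proportional to $\eta$, whence $\mu_0 \asymp \eta$ and $\log D \asymp \eta$, recovering the quadratic bound).
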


% The following lemma will be useful in the proof of Proposition \ref{prop:gros_moment}.

% \begin{lem}\label{lem:convexity}
% Let $Z$ be a nonnegative random variable. 
% \begin{enumerate}
% \item The real-valued function $f\colon h \in \intervallefo{0}{\infty} \mapsto \log \Espe[Z^h]$ is convex. 
% \item If $\Espe[Z] = 1$, then the function $f$ is a nondecreasing function on $\intervallefo{1}{\infty}$.
% \end{enumerate}
% \end{lem}

% \begin{proof}[Proof of Lemma \ref{lem:convexity}]
% \pierre{06/10 -- C'est exactement le résultat de \cite[p.\ 132]{kahane1976} cité dans la Proposition \ref{prop:moments_W}. On peut donc remonter la proposition au début de la section, ou raccourcir la preuve.} First, let $\lambda \in \intervalleff{0}{1}$ and $x$ and $y$ real numbers. By Hölder's inequality, 
% \begin{align*}
% f(\lambda x+(1-\lambda)y)&\leqslant \log\left(\Espe[Z^x]^\lambda \Espe[Z^y]^{1-\lambda}\right)\\
% &=\lambda\log \Espe[Z^x]+(1-\lambda)\log \Espe[Z^y]\\
% &=\lambda f(x)+(1-\lambda) f(y).
% \end{align*}
% Second, if $\Espe[Z]=1$, then $f(0) = \log \Prob(Z > 0) \leqslant 0 = f(1)$. Hence the result.
% \end{proof}

\begin{proof}[Proof of Proposition \ref{prop:gros_moment}]
Let $r > \exp \Espe[W \log(W)]$. Let $\eta \in \intervalleoo{0}{ce}$. Let $\varepsilon \in \intervalleoo{0}{c}$ be such that $\eta < (c - \varepsilon) e$. As a consequence of Proposition \ref{prop:moments_W}, item \ref{prop:moments_W_momk}, there exists $D \geqslant 1$ such that, for all $h \geqslant 1$,
\begin{align} \label{eq:def_Rk}
\frac{\Espe[W^h]}{h!} \leqslant \frac{D}{(c - \varepsilon)^h} .
\end{align}
Let $\theta \in \intervalleoo{1/2}{1}$. For all $h \geqslant 2$, we write the Euclidean division $h = q \floor{\theta h} + s$. Using \eqref{eq:kahane_peyriere}, \eqref{eq:def_Rk}, and the convexity of $(h_1, \dots, h_r) \mapsto \log \prod_{j=1}^r \Espe[Z_r^{h_j}]$, we get
% Using \eqref{eq:kahane_peyriere} and Lemma \ref{lem:convexity} applied to $Z = Z_r$, one gets
% \begin{align} \label{eq:rec_2}
% \Espe[Z_r^h]
%  & \leqslant \frac{h!}{r^h \bigl( 1 - \frac{\Espe[W^h]}{r^{h-1}} \bigr)} \left( \frac{D^r}{(c - \varepsilon)^h} \sum_{0 \leqslant h_j < \theta h} \Espe[Z_r^{\theta h}]^{1/\theta} + \frac{D^{(1 - \theta) h}}{(c - \varepsilon)^h} \sum_{\exists h_j \geqslant \theta h} \Espe[Z_r^{h-1}] \right) \notag \\
%  & \leqslant \frac{h!}{r^h \bigl( 1 - \frac{\Espe[W^h]}{r^{h-1}} \bigr)}  \left( \frac{D^r}{(c - \varepsilon)^h} \binom{r+h}{r} \Espe[Z_r^{\theta h}]^{1/\theta} + \frac{D^{(1 - \theta) h}}{(c - \varepsilon)^h} r \binom{r + (1 - \theta) h}{r} \Espe[Z_r^{h-1}] \right)
% \end{align}
\begin{align*}
\Espe[Z_r^h]
 & \leqslant \frac{h!}{r^h \bigl( 1 - \frac{\Espe[W^h]}{r^{h-1}} \bigr)} \biggl( \frac{D^h}{(c - \varepsilon)^h} \sum_{0 \leqslant h_j \leqslant \theta h} \Espe[Z_r^{\floor{\theta h}}]^q \Espe[Z_r^s] + \frac{D^{(1 - \theta) h + 1}}{(c - \varepsilon)^h} \sum_{\exists h_j > \theta h} \Espe[Z_r^{h-1}] \Espe[Z_r] \biggr).
\end{align*}
The number of terms in the first sum is less than the total number of compositions of $h$ in $r$ parts, i.e.
\[
\binom{r + h - 1}{r - 1} \leqslant \binom{r + h}{r} ,
\]
and similarly the number of terms in the second sum is less than
\[
r \binom{r + h - \ceil{\theta h} - 2}{r - 2} \leqslant r \binom{r + (1 - \theta) h}{r} .
\]
(where, for all $x \in \R$, $\binom{x}{n} = x(x-1) \cdots (x-n+1) / n!$). Besides, noting
\[
\beta
 \defeq \frac{1}{\theta - \frac{1}{2}} + 1
 \geqslant \frac{1}{\theta - \frac{1}{h}} + 1
 \geqslant \floor{\frac{h}{\floor{\theta h}}} + 1
 = q + 1,
\]
and using the fact that $h \in \intervallefo{1}{\infty} \mapsto \Espe[Z_r^h]$ is nondecreasing and greater or equal to $1$,
\begin{align} \label{eq:rec_2}
\Espe[Z_r^h]
 & \leqslant \frac{h!}{r^h \bigl( 1 - \frac{\Espe[W^h]}{r^{h-1}} \bigr)}  \biggl( \frac{D^h}{(c - \varepsilon)^h} \binom{r+h}{r} \Espe[Z_r^{\theta h}]^\beta + \frac{D^{(1 - \theta) h + 1}}{(c - \varepsilon)^h} r \binom{r + (1 - \theta) h}{r} \Espe[Z_r^{h-1}] \biggr).
\end{align}

Our strategy is to bound the moments of $Z_r$ by backward induction, finally relying on the fact that the small moments are well bounded. Indeed, by Proposition \ref{prop:Zrh_rec}, there exists $\eta_1 \in \intervalleoo{0}{1 \wedge ce}$ such that
\[
\limsup_{r \to \infty} \frac{1}{r} \log \Espe[Z_r^{\eta_1 r}] < \infty .
\]
Let $\eta_0 = \eta_1 / 4$. We claim that, for all $\gamma \in \intervalleof{0}{1}$, there exists $C(\gamma) \in \R$ such that, for all $r$ large enough,
\begin{align} \label{eq:q_majo}
\forall h \in \intervallentff{\eta_0 r}{\eta r} \quad
\frac{h!}{r^h \bigl( 1 - \frac{\Espe[W^h]}{r^{h-1}} \bigr)} \binom{r + \gamma h}{r} \frac{D^{\gamma h}}{(c - \varepsilon)^h} \leqslant e^{C(\gamma) r} .
\end{align}
and $C(\gamma) < 0$ for $\gamma$ small enough. Indeed, using \eqref{eq:stirling} and applying Proposition \ref{prop:moments_W}, item \ref{prop:moments_W_terme_inverse},
\begin{align*}
\frac{1}{r} \log & \biggl( \frac{h!}{r^h \bigl( 1 - \frac{\Espe[W^h]}{r^{h-1}} \bigr)} \binom{r + \gamma h}{r} \frac{D^{\gamma h}}{(c - \varepsilon)^h} \biggr) \\
 & \leqslant \frac{h}{r} \log \left(\frac{h}{(c - \varepsilon) e r}\right) + \frac{h}{r} \gamma \log(D) + \left(1+\frac{\gamma h}{r} + \frac{1}{2r} \right)\log \left(1+\frac{\gamma h}{r} \right) - \frac{\gamma h}{r} \log \left( \frac{\gamma h}{r}\right) \\
 & \hspace{2cm} - \frac{1}{2 r} \log(\gamma) + \frac{2 \Espe[W^2]}{r^2} + \frac{1}{12 r h} + \frac{1}{12 r(r + \gamma h)} \\
 & \leqslant \eta_0 \log \left(\frac{\eta}{(c-\varepsilon) e}\right) + \gamma \eta \log(D) + (1 +\gamma \eta) \log(1 + \gamma \eta) + \min(e^{-1}, - \gamma \eta \log(\gamma \eta)) + \gamma ,
\end{align*}
for all $r$ large enough and the conclusion follows, noticing that the latter quantity is negative for small $\gamma > 0$.

From now on, we assume that $\theta \in \intervalleoo{1/2}{1}$ is such that $C(1 - \theta) < 0$. Let also $C_1 \defeq C(1)$. Hence, starting from \eqref{eq:rec_2} and using \eqref{eq:q_majo}, we get, for all $r$ large enough, for all $h \in \intervallentff{\eta_1 r}{\eta r}$,
\begin{align*}
\Espe[Z_r^h]
 & \leqslant e^{C_1 r} \Espe[Z_r^{\theta h}]^\beta + \frac{1}{2} \Espe[Z_r^{h-1}] \\
 & \leqslant e^{C_1 r} \Espe[Z_r^{\theta h}]^\beta + \frac{1}{2} \Bigl( e^{C_1 r} \Espe[Z_r^{\theta (h-1)}]^\beta + \frac{1}{2} \Espe[Z_r^{h-2}] \Bigr) \\ 
 & \leqslant e^{C_1 r} \Espe[Z_r^{\theta h}]^\beta \sum_{k=0}^{h - \floor{\theta h} - 1} \frac{1}{2^k} + \frac{1}{2^{h - \floor{\theta h}}} \Espe[Z_r^{\theta h}],
\end{align*}
by induction (note that the smallest moment to which we apply the induction argument is of order $\theta(\floor{\theta h} + 1) \geqslant \theta^2 h \geqslant h / 4 \geqslant \eta_0 r$, so \eqref{eq:q_majo} may be used) and the monotonicity of $h \mapsto \Espe[Z_r^{\theta h}]$. Since $\beta \geqslant 1$, $C_1 \geqslant 0$, and $\Espe[Z_r^{\theta h}] \geqslant 1$, we obtain
\begin{align}
\Espe[Z_r^h]
 & \leqslant e^{C_1 r} \Espe[Z_r^{\theta h}]^\beta \sum_{k=0}^{h - \floor{\theta h}} \frac{1}{2^k}
 \leqslant 2 e^{C_1 r} \Espe[Z_r^{\theta h}]^\beta . \label{eq:rec_theta_h}
\end{align}
Let $K \defeq \min\enstq{k \geqslant 1}{\theta^k \eta \leqslant \eta_1}$. Applying \eqref{eq:rec_theta_h} recursively, we get
\begin{align}
\log \Espe[Z_r^{\eta r}]
 & \leqslant \log(2) + C_1 r + \beta \log \Espe[Z_r^{\theta \eta r}] \notag \\
 & \leqslant (\log(2) + C_1 r)(1 + \beta) + \beta^2 \log \Espe[Z_r^{\theta^2 \eta r}] \notag \\
% & \leqslant \cdots \notag \\
 & \leqslant (\log(2) + C_1 r) \sum_{k=0}^{K-1} \beta^k + \beta^K \log \Espe[Z_r^{\theta^K \eta r}] . \label{eq:upper_bound_large_moment0}
\end{align}
Finally,
\[
\limsup_{r \to \infty} \frac{1}{r} \log \Espe[Z_r^{\eta r}]
 \leqslant C_1 \frac{\beta^K - 1}{\beta - 1} + \beta^K \limsup_{r \to \infty} \frac{1}{r} \log \Espe[Z_r^{\eta_1 r}]
 < \infty . \qedhere
\]
\end{proof}

Propositions \ref{prop:Zrh_rec} and \ref{prop:gros_moment} immediately entail the following result.

\begin{cor} \label{cor:moments} \leavevmode

\begin{enumerate}
\item \label{cor:moments_petits} For all $\alpha \in \intervallefo{0}{1/2}$ and $\zeta > 0$,
\[
\lim_{r \to \infty} \frac{1}{r^{2 \alpha - 1}} \log \Espe[Z_r^{\zeta r^\alpha}]
 = 1 .
\]

\item \label{cor:moments_moyens} For all $\alpha \in \intervallefo{1/2}{1}$ and $\zeta > 0$,
\[
\limsup_{r \to \infty} \frac{1}{r^{2 \alpha - 1}} \log \Espe[Z_r^{\zeta r^\alpha}]
 \leqslant \frac{\zeta^2 \Var(W)}{2} .
\]

\item \label{cor:moments_grands} For all $\eta \in \intervalleoo{0}{ce}$,
\[
\kappa(\eta)
 \defeq \limsup_{r \to \infty} \frac{1}{r} \log \Espe[Z_r^{\eta r}]
 < \infty
\]
and $\kappa(\eta) = O(\eta^2)$ as $\eta \to 0$.
\end{enumerate}
\end{cor}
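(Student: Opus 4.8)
The plan is to derive all three estimates by inserting the appropriate moment order into Propositions~\ref{prop:Zrh_rec} and~\ref{prop:gros_moment} and reading off the asymptotics; the matching lower bounds come from Jensen's inequality $\Espe[Z_r^h]\geqslant(\Espe[Z_r])^h=1$ (valid for $h\geqslant 1$), and, since those propositions only bound \emph{integer} moments, one first reduces to integer exponents via the monotonicity of $L^p$-norms, namely $\Espe[Z_r^{\floor s}]\leqslant\Espe[Z_r^s]\leqslant\Espe[Z_r^{\ceil s}]$ for $s\geqslant 1$ (using $\Espe[Z_r^s]\geqslant 1$).

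\emph{Items~\ref{cor:moments_petits} and~\ref{cor:moments_moyens}: upper bounds.} Fix $\delta>0$ and let $\eta(\delta)\in\intervalleoo{0}{1\wedge ce}$ be the threshold furnished by Proposition~\ref{prop:Zrh_rec}. Given $\alpha\in\intervallefo{0}{1}$ and $\zeta>0$, put $h_r\defeq\ceil{\zeta r^\alpha}$; since $\alpha<1$ one has $h_r/r\to 0$, so $h_r\leqslant\eta(\delta)r$ for $r$ large and Proposition~\ref{prop:Zrh_rec} gives
\[
\log\Espe[Z_r^{h_r}]\leqslant\frac{h_r^2}{2(r-h_r)}\Bigl(\Var(W)+\delta+\frac{C}{h_r}\Bigr).
\]
Because $h_r\sim\zeta r^\alpha$ and $h_r/r\to 0$, the right-hand side is $\sim\tfrac{\zeta^2}{2}\,r^{2\alpha-1}\,(\Var(W)+\delta)$; dividing by $r^{2\alpha-1}$, letting $r\to\infty$ and then $\delta\to 0$ yields $\limsup_{r\to\infty}r^{1-2\alpha}\log\Espe[Z_r^{\zeta r^\alpha}]\leqslant\tfrac{\zeta^2\Var(W)}{2}$, which is item~\ref{cor:moments_moyens}. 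For $\alpha\in\intervallefo{0}{1/2}$ the same estimate, combined with the lower bound discussed below, pins down the limit of item~\ref{cor:moments_petits}.

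\emph{Item~\ref{cor:moments_grands}.} The finiteness of $\kappa(\eta)$ for every $\eta\in\intervalleoo{0}{ce}$ is exactly Proposition~\ref{prop:gros_moment}. For the behaviour as $\eta\to 0$, fix $\delta>0$; for $\eta\leqslant\eta(\delta)$, Proposition~\ref{prop:Zrh_rec} applies with $h=\floor{\eta r}$ and gives
\[
\frac1r\log\Espe[Z_r^{\floor{\eta r}}]\leqslant\frac{\floor{\eta r}^2}{2r\,(r-\floor{\eta r})}\Bigl(\Var(W)+\delta+\frac{C}{\floor{\eta r}}\Bigr)\xrightarrow[r\to\infty]{}\frac{\eta^2}{2(1-\eta)}\bigl(\Var(W)+\delta\bigr),
\]
so $\kappa(\eta)\leqslant\frac{\eta^2}{2(1-\eta)}(\Var(W)+\delta)$, which is $O(\eta^2)$ as $\eta\to 0$.

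\emph{Lower bound in item~\ref{cor:moments_petits} and main obstacle.} The only step that is more than a substitution is the matching lower bound for item~\ref{cor:moments_petits}. When $\alpha=0$ the exponent $h=\ceil\zeta$ is a constant and one expands $\Espe[Z_r^{\ceil\zeta}]$ in powers of $1/r$ directly from the Kahane--Peyri\`ere recursion \eqref{eq:kahane_peyriere} (for instance $\Espe[Z_r^2]=(r-1)/(r-\Espe[W^2])$). When $\alpha\in\intervalleoo{0}{1/2}$, so $h_r\to\infty$, plain log-convexity of $h\mapsto\Espe[Z_r^h]$ through the second moment only gives $\log\Espe[Z_r^{h_r}]\gtrsim h_r\Var(W)/r$, of smaller order than the target $h_r^2\Var(W)/(2r)$; the sharp lower bound must instead be extracted from \eqref{eq:kahane_peyriere} by retaining the dominant family of compositions of $h_r$ into $r$ parts (those with parts in $\{0,1,2\}$) and estimating the resulting sum by Stirling's formula, thereby mirroring from below the combinatorial estimate of Proposition~\ref{prop:Zrh_rec}. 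Apart from this point, everything reduces to bookkeeping of the auxiliary parameters --- $\delta$ in Proposition~\ref{prop:Zrh_rec}, the $\varepsilon$ entering $(c-\varepsilon)e$ via Proposition~\ref{prop:moments_W}, and the ratio $h/r$ staying under the admissible threshold --- together with the reduction to integer exponents noted above.
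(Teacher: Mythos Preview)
Your treatment of items~\ref{cor:moments_moyens} and~\ref{cor:moments_grands} is correct and is exactly what the paper intends: the authors state only that the corollary follows immediately from Propositions~\ref{prop:Zrh_rec} and~\ref{prop:gros_moment}, and your substitution of $h=\ceil{\zeta r^\alpha}$ (resp.\ $h=\floor{\eta r}$) into those bounds, together with the reduction from real to integer exponents via monotonicity, is the whole argument.

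For item~\ref{cor:moments_petits}, however, you have been misled by what must be a misprint. Read literally, a limit equal to the constant~$1$ at scale $r^{2\alpha-1}$, independent of $\zeta$ and of $\Var(W)$, cannot hold: already for $\alpha=0$, $\zeta=2$ one computes $r\log\Espe[Z_r^2]\to\Var(W)$ from the exact formula $\Espe[Z_r^2]=(r-1)/(r-\Espe[W^2])$. The only place item~\ref{cor:moments_petits} is invoked in the paper is to justify $\Espe[Z_r^2]\to 1$ in the proof of the left moderate deviations for the infinite tree, and the sentence ``immediately entail'' preceding the corollary confirms that no extra work is expected. The intended statement is simply $\Espe[Z_r^{\zeta r^\alpha}]\to 1$. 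Under this reading the lower bound is Jensen ($\Espe[Z_r^h]\geqslant 1$) and the upper bound is Proposition~\ref{prop:Zrh_rec}: for $\alpha<1/2$ one has $h_r^2/(r-h_r)\to 0$, so $\Espe[Z_r^{h_r}]\leqslant\exp(o(1))\to 1$. Your final paragraph---extracting from the recursion~\eqref{eq:kahane_peyriere} a lower bound matching the scale $r^{2\alpha-1}$---attacks a sharper and different asymptotic than the corollary or its applications actually require; the strategy is not unreasonable, but it is unnecessary here and remains only a sketch in your write-up.
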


\subsection{Proof of Theorem \ref{th:infinite}, item \ref{th:infiniteMD} (moderate deviations)}

\emph{Left deviations} --- Let $\alpha \in \intervalleoo{0}{1/2}$ and $a > 0$. We want to estimate $\Prob( r^\alpha (Z_r - 1) \leqslant - a)$. For all $t \leqslant 0$, we consider
\begin{align*}
\Lambda_r(s)
 & \defeq \frac{1}{r^{1-2\alpha}} \log \Espe[e^{s r^{1-\alpha}  (Z_r - 1)}] \\
 & = \frac{1}{r^{1-2\alpha}} \log \Espe\biggl[ \exp\biggl( r^{1-\alpha} s \frac{1}{r}\sum_{i=1}^r (W_i Z_{r,i} - 1) \biggr) \biggr] \\
 & = r^{2\alpha} \log \Espe[e^{s r^{-\alpha} (W Z_r - 1)}] .
\end{align*}
Using the Taylor expansion $e^x = 1 + x + x^2 e^{\theta(x)} / 2$ with $\theta(x) \leqslant \max(x, 0)$ and the fact that $\Espe[W Z_r] = \Espe[W] \Espe[Z_r] = 1$, we get
\begin{align*}
\Lambda_r(s)
 & = r^{2\alpha} \log\Bigl(1 + \frac{s^2}{2 r^{2 \alpha}} \Espe\bigl[ (W Z_r - 1)^2 e^{\theta(s r^{-\alpha} (W Z_r - 1))} \bigr] \Bigr)
 \xrightarrow[r \to \infty]{} \frac{s^2}{2} \Espe[(W - 1)^2] .
\end{align*}
Indeed,
\begin{align*}
\Bigl| \Espe\bigl[ & (W Z_r - 1)^2
  e^{\theta(s r^{-\alpha} (W Z_r - 1))} \bigr] - \Espe[(W - 1)^2] \Bigr| \\
 & \leqslant \abs{\Espe\bigl[ (W Z_r - 1)^2 \bigl( e^{\theta(s r^{-\alpha} (W Z_r - 1))} - 1 \bigr) \bigr]} + \abs{\Espe\bigl[ (W Z_r - 1)^2 - (W - 1)^2 \bigr]} \\
 & \leqslant \abs{\Espe\bigl[ (W Z_r - 1)^2 \bigr]} \bigl( e^{- s r^{-\alpha}} - 1 \bigr) + \abs{\Espe\bigl[ (W Z_r - 1)^2 - (W - 1)^2 \bigr]}  \xrightarrow[r \to \infty]{} 0
\end{align*}
by Corollary \ref{cor:moments}, item \ref{cor:moments_petits}. Finally, the unilateral version of Gärtner-Ellis theorem applies and gives
\[
\frac{1}{r^{1-2\alpha}} \log \Prob( r^\alpha (Z_r - 1) \leqslant - a)
 \xrightarrow[r \to \infty]{} - \frac{a^2}{2 \Var(W)} .
\]

\medskip

\emph{Right deviations} --- Let $a \geqslant 0$. Using \eqref{eq:E_infinite}, one gets
\[
\Prob(r^\alpha (Z_r - 1) \geqslant a)
 = \Prob\biggl( \sum_{i=1}^r (W_i Z_{r,i} - 1) \geqslant a r^{1-\alpha} \biggr) .
\]

Let us prove the lower bound. For all $\varepsilon > 0$,
\begin{align*}
\frac{1}{r^{1-2\alpha}} \log &\ \Prob\biggl( \sum_{i=1}^r (W_i Z_{r,i} - 1) \geqslant a r^{1-\alpha} \biggr) \\
 & \geqslant \frac{1}{r^{1-2\alpha}} \log \Prob\biggl( \sum_{i=1}^r (W_i Z_{r,i} - 1) \geqslant a r^{1-\alpha},\ \forall i \in \intervallentff{1}{r}\ Z_{r,i} \geqslant 1 - \varepsilon r^{-\alpha} \biggr) \\
 & \geqslant \frac{1}{r^{1-2\alpha}} \log \Prob\biggl( \sum_{i=1}^r (W_i (1 - \varepsilon r^{-\alpha}) - 1) \geqslant a r^{1-\alpha} \biggr) +  r^{2\alpha} \log \Prob(Z_r \geqslant 1 - \varepsilon r^{-\alpha}) .
\end{align*}
The left deviations for $Z_r$ yield
\[
\frac{1}{r^{1-2\alpha}} \log \Prob(Z_r - 1 < - \varepsilon r^{-\alpha})
  \xrightarrow[r \to \infty]{} - J(-\varepsilon)
 < 0 ,
\]
hence
\[
r^{2\alpha} \log \Prob(Z_r
 \geqslant 1 - \varepsilon r^{-\alpha})
  \xrightarrow[r \to \infty]{} 0 .
\]
Using \eqref{eq:modW} and letting $\varepsilon \to 0$, one finally gets
\begin{align*}
\liminf_{r \to \infty} \frac{1}{r^{1-2\alpha}} \log \Prob(r^\alpha (Z_r - 1) \geqslant a)
 \geqslant - J(a) .%\label{eq:moderate_infini_minoration}
\end{align*}
%%%%%%%%%%%%%%%%%%%%%%%%%%%%%%%%%%%%%%%%%%%%%%%%%%%%%%%%%%%%%%%%%%
%%%%%%%%%%%%%%%%%%%%%%%%%%%%%%%%%%%%%%%%%%%%%%%%%%%%%%%%%%%%%%%%%%
%%%%%%%%%%%%%%%%%%%%%%%%%%%%%%%%%%%%%%%%%%%%%%%%%%%%%%%%%%%%%%%%%%
Now, let us turn to the upper bound. Using Markov's inequality and Corollary \ref{cor:moments}, item \ref{cor:moments_moyens}, for any $\zeta > 0$,
\begin{align*}
\limsup_{r \to \infty} \frac{1}{r^{1-2\alpha}} \log \Prob\bigl(Z_r-1\geqslant a r^{-\alpha} \bigr) 
 & \leqslant \limsup_{r \to \infty} \biggl( \frac{1}{r^{1-2\alpha}}\log \Espe[Z_r^{\zeta r^{1-\alpha}}]-\zeta r^\alpha\log(1+a r^{-\alpha}) \biggr) \\
& \leqslant \frac{\zeta^2 \Var(W)}{2} - \zeta a .
\end{align*}
Otimizing in $\zeta$, we conclude to the required upper bound.

\subsection{Proof of Theorem \ref{th:infinite}, item \ref{th:infiniteLD} (large deviations)}

Remind that
\[
I^\infty = \lim_{n \to \infty} \downarrow I^n .
\]
In particular, for all $a \leqslant 1$, $I^\infty(a) = \Lambda_W^*(a)$ and the large deviations on the left for $(Z_r)_{r \geqslant 2}$ are given in Proposition \ref{prop:leftLD}. Let us turn to the large deviations on the right. 
Let us introduce, for all $a \geqslant 1$,
\begin{align}\label{def:Isup_inf}
-\underline{I}^{\infty}(a)
 \defeq \liminf_{r\to\infty} \frac 1r \log \Prob (Z_r\geqslant a)
\leqslant  \limsup_{r\to\infty} \frac 1r \log \Prob (Z_r\geqslant a)\eqdef -\overline{I}^{\infty}(a).
\end{align}

\begin{prop}[Lower bound] \label{prop:large_infini_lowerbound}
For all $a \geqslant 1$, $\underline{I}^\infty(a) \leqslant I^\infty(a)$.
\end{prop}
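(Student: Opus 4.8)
Since $I^\infty(a)=\inf_{n\in\N^*}I^n(a)$ (a decreasing limit), it suffices to show that for \emph{every} $n\in\N^*$,
\[
\liminf_{r\to\infty}\frac1r\log\Prob(Z_r\geq a)\geq -I^n(a);
\]
taking the supremum over $n$ then gives $-\underline{I}^\infty(a)\geq -I^\infty(a)$, i.e.\ $\underline{I}^\infty(a)\leq I^\infty(a)$. The idea is to transplant the finite‑tree lower bound (Theorem \ref{th:finite}, item \ref{th:finiteLD}) by bounding $Z_r$ from below by a fixed‑level truncation on a high‑probability event. Fix $n$ and iterate the a.s.\ identity $Z_r=\frac1r\sum_i W_iZ_{r,i}$ $n$ times to write
\[
Z_r=\frac{1}{r^n}\sum_{1\leqslant i_1,\dots,i_n\leqslant r}W_{i_1}W_{i_1,i_2}\cdots W_{i_1,\dots,i_n}\,Z_{r,(i_1,\dots,i_n)}\qquad\text{a.s.},
\]
where, conditionally on $(W_v)_{1\leqslant\lvert v\rvert\leqslant n}$, the $r^n$ masses $Z_{r,(i_1,\dots,i_n)}$ are i.i.d.\ copies of $Z_r$. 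Consequently the finite‑tree mass $\widetilde Z_r^n\defeq r^{-n}\sum_{1\leqslant i_1,\dots,i_n\leqslant r}W_{i_1}\cdots W_{i_1,\dots,i_n}$, which by \eqref{eq:defZrnintro} has the law of $Z_r^n$, is independent of the family $(Z_{r,(i_1,\dots,i_n)})$.

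For $\varepsilon\in\intervalleoo{0}{1}$, on the event $A_{r,\varepsilon}\defeq\{Z_{r,(i_1,\dots,i_n)}\geqslant 1-\varepsilon\ \text{for all words}\}$ one has $Z_r\geqslant(1-\varepsilon)\widetilde Z_r^n$, hence by independence
\[
\Prob(Z_r\geqslant a)\geqslant\Prob\bigl((1-\varepsilon)\widetilde Z_r^n\geqslant a\bigr)\,\Prob(A_{r,\varepsilon})=\Prob\Bigl(Z_r^n\geqslant\tfrac{a}{1-\varepsilon}\Bigr)\,\Prob(A_{r,\varepsilon}).
\]
For the first factor, the finite‑tree lower bound (Theorem \ref{th:finite}, item \ref{th:finiteLD}, which applies since $\tfrac{a}{1-\varepsilon}>1$) gives $\liminf_{r\to\infty}\frac1r\log\Prob(Z_r^n\geqslant\tfrac{a}{1-\varepsilon})\geqslant -I^n(\tfrac{a}{1-\varepsilon})$. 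For the second factor, Proposition \ref{prop:leftLD} yields $\frac1r\log\Prob(Z_r\leqslant 1-\varepsilon)\to-\Lambda_W^*(1-\varepsilon)<0$, so $\Prob(Z_r<1-\varepsilon)\leqslant e^{-\rho r}$ for some $\rho>0$ and all $r$ large; by a union bound $\Prob(A_{r,\varepsilon})\geqslant 1-r^n e^{-\rho r}$, whence $\frac1r\log\Prob(A_{r,\varepsilon})\to 0$. Combining the two estimates,
\[
\liminf_{r\to\infty}\frac1r\log\Prob(Z_r\geqslant a)\geqslant -I^n\Bigl(\tfrac{a}{1-\varepsilon}\Bigr),
\]
and letting $\varepsilon\to 0$, using that $I^n$ is nondecreasing and continuous (Propositions \ref{prop:Indecroissante1} and \ref{prop:Indecroissante2}) so that $I^n(\tfrac{a}{1-\varepsilon})\to I^n(a)$, we obtain $\liminf_{r\to\infty}\frac1r\log\Prob(Z_r\geqslant a)\geqslant -I^n(a)$. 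Taking the supremum over $n\in\N^*$ finishes the proof.

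The argument is essentially a soft transplantation of the finite‑tree result, so no deep obstacle arises; the one point that really has to be handled with care is the factor $\Prob(A_{r,\varepsilon})=\Prob(Z_r\geqslant 1-\varepsilon)^{r^n}$, which is raised to the power $r^n$: here mere convergence in probability $Z_r\to 1$ is not enough, and the exponential left‑deviation bound of Proposition \ref{prop:leftLD} is genuinely needed so that $r^{-1}\log\Prob(A_{r,\varepsilon})\to 0$ despite the polynomial prefactor $r^n$.
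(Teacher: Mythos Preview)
Your proof is correct and is in fact cleaner than the paper's argument, though the underlying idea is the same.

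The paper also unfolds the tree $n$ levels, but instead of citing the finite-tree large deviation principle as a black box, it rebuilds the lower bound for $I^n$ by hand: it follows a single distinguished branch $1^1,1^2,\dots,1^{n-1}$, imposes $W_{1^k}\geqslant w_k r$ along that branch and $\sum_{i\geqslant 2}W_{1^{k-1},i}\geqslant s_k r$ on the side shoots, and asks only the $n(r-1)+1$ masses $Z_{r,1^{k-1},i}$ ($i\geqslant 2$) hanging off that path to exceed $1-\varepsilon$. Because the number of such masses is linear in $r$, the paper can get away with the weak statement $\Prob(Z_r\geqslant 1-\varepsilon)\to 1$; no exponential left-deviation bound is needed. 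It then optimises over the parameters $(w_k,s_k,z)$ and recovers the recursive definition~\eqref{eq:Inrecurrence} of $I^n$.

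Your route trades economy of hypotheses for economy of argument: by asking all $r^n$ leaf masses to exceed $1-\varepsilon$, you reduce the comparison to $Z_r\geqslant(1-\varepsilon)\widetilde Z_r^{\,n}$ and invoke Theorem~\ref{th:finite}\,(ii) directly, at the cost of needing the exponential rate from Proposition~\ref{prop:leftLD} to kill the $r^n$ union-bound prefactor---a point you rightly flag as the only delicate step. Both proofs end by letting $\varepsilon\to 0$ via the continuity of $I^n$ (Proposition~\ref{prop:Indecroissante2}) and then $n\to\infty$.
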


\begin{proof}[Proof of Proposition \ref{prop:large_infini_lowerbound}]
Remind that, for all $k \in \N^*$, $1^k$ represents the word $1, 1, \dots, 1$ of length $k$. For all $r \in \N^* \setminus \{ 1 \}$, $n \in \N^*$, $k \in \N$, and $i \in \intervallentff{1}{r}$, let
\begin{equation} \label{eq:Zr1kin}
Z_{r, 1^k, i}^n \defeq \frac{1}{r^n} \sum_{1 \leqslant i_1, \dots, i_n \leqslant r} W_{1^k, i, i_1} W_{1^k, i, i_1,i_2} \cdots W_{1^k, i, i_1,\dots, i_n}
\end{equation}
and let $Z_{r, 1^k, i}$ be the limit of the martingale $(Z_{r, 1^k, i}^n)_{n \geqslant 1}$ (by convention, $Z_{r, 1^0, i} = Z_{r, i}$). Note that $(W_{1^k, i})_{k \geqslant 0, 1 \leqslant i \leqslant r} \bullet (Z_{r, 1^k, i})_{k \geqslant 0, 2 \leqslant i \leqslant r}$ is a family of independent random variables. Now let $a \geqslant 1$, $n \in \N^*$, $\varepsilon > 0$, and consider $((s_1, w_1), (s_{1, 1}, w_{1, 1}), \dots, (s_{1^{n-2},1}, w_{1^{n-2},1}), z) \in (\R^2)^{n-1} \times \R$ such that
\begin{align} \label{eq:large_infini_lowerbound_contrainte}
s_1 (1-\varepsilon) + w_1\Bigl( 
s_{1,1}(1-\varepsilon) %+ w_{1,1}\Bigl( 
%s_{1,1,1}(1-\varepsilon)
 + \dots + w_{1^{n-3},1}
\Bigl(s_{1^{n-2},1}(1-\varepsilon) + w_{1^{n-2},1} z(1-\varepsilon) \Bigr) %\Bigr)
\Bigr)
 \geqslant a .
\end{align}
Then
\begin{align*}
\frac 1r \log \Prob (Z_r \geqslant a)
 \geqslant \frac 1r \log \Prob \Bigl(
 & W_1\geqslant w_1r,\ \sum_{i=2}^r W_i \geqslant s_1r,\ \forall i \in \intervallentff{2}{r}\ Z_{r, i} \geqslant 1 - \varepsilon, \\
 & W_{1,1}\geqslant w_{1,1} r, \sum_{i=2}^r W_{1,i} \geqslant s_{1,1}r,\ \forall i \in \intervallentff{2}{r}\ Z_{r, 1, i} \geqslant 1 - \varepsilon, \\
 & \cdots \\
 & W_{1^{n-2},1} \geqslant w_{1^{n-2},1} r,\ \sum_{i=2}^r W_{1^{n-2},i} \geqslant s_{1^{n-2},1}r,\ \forall i \in \intervallentff{2}{r}\ Z_{r, 1^{n-2}, i} \geqslant 1 - \varepsilon, \\
 & \sum_{i=1}^r W_{1^{n-1},i} \geqslant \frac{zr}{1-\varepsilon},\
\forall i \in \intervallentff{1}{r}\ Z_{r, 1^{n-1}, i} \geqslant 1-\varepsilon \Bigr) \\
 \geqslant \frac 1r \log \Prob \Bigl(
 & W_1\geqslant w_1r,\ \sum_{i=2}^r W_i \geqslant s_1r,\ W_{1,1}\geqslant w_{1,1} r,\ \sum_{i=2}^r W_{1,i} \geqslant s_{1,1}r,\ \cdots \\
 & W_{1^{n-2},1}\geqslant w_{1^{n-2},1} r,\ \sum_{i=2}^r W_{1^{n-2},i} \geqslant s_{1^{n-2},1}r,\ \sum_{i=1}^r W_{1^{n-1},i} \geqslant zr \Bigr) \\
 & + \frac{n (r - 1) + 1}{r} \log \Prob(Z_r \geqslant 1-\varepsilon).
\end{align*}
Remind that $\Prob(Z_r \geqslant 1-\varepsilon) \to 1$ as $r \to \infty$, so by independence,
\begin{align*}
-\underline{I}^{\infty}(a)
 & = \liminf_{r\to\infty} \frac 1r \log \Prob (Z_r\geqslant a)\\
 & \geqslant \liminf_{r\to\infty} \frac 1r \log \Prob \Bigl(
W_1\geqslant w_1r,\ \sum_{i=2}^r W_i \geqslant s_1r,\ W_{1,1}\geqslant w_{1,1} r,\ \sum_{i=2}^r W_{1,i} \geqslant s_{1,1}r, \\
 & \hspace{3.5cm} \cdots,\ W_{1^{n-2},1}\geqslant w_{1^{n-2},1} r,\ \sum_{i=2}^r W_{1^{n-2},i} \geqslant s_{1^{n-2},1}r, \\
 & \hspace{3.5cm} \sum_{i=1}^r W_{1^{n-1},i} \geqslant \frac{zr}{1-\varepsilon} \Bigr) \\
 & = -\bigl(cw_1+\Lambda_W^*(s_1)
+ cw_{1,1}+\Lambda_W^*(s_{1,1})
+ \dots + cw_{1^{n-2},1}+\Lambda_W^*(s_{1^{n-2},1})+\Lambda_W^*(z)
\bigr).
\end{align*}
Optimizing in $((s_1, w_1), (s_{1, 1}, w_{1, 1}), \dots, (s_{1^{n-2},1}, w_{1^{n-2},1}), z) \in (\R^2)^{n-1} \times \R$ satisfying \eqref{eq:large_infini_lowerbound_contrainte}
%i.e. 
%\begin{align*}
%s_1 + w_1\bigl( 
%s_{1,1} + w_{1,1}\bigl( 
%s_{1,1,1}
% + \dots +
%\bigl(s_{1^{n-2},1} + w_{1^{n-2},1} z \bigr) \bigr)
%\bigr)
% \geqslant \frac{a}{1 - \varepsilon} ,
%\end{align*}
and using the induction relation of Theorem \ref{th:finiteLD}, we get
\begin{align*}
-\underline{I}^{\infty}(a)
 \geqslant - I^n\Bigl( \frac{a}{1 - \varepsilon} \Bigr) .
\end{align*}
Taking the limit as $\varepsilon \to 0$ and using the continuity of $I^n$ (see Proposition \ref{prop:Indecroissante2}, item \ref{prop:Indecroissante_continuous}), and then taking the limit as $n \to \infty$, we conclude that
\begin{align*}
\underline{I}^{\infty}(a)
 & \leqslant \lim_{n \to \infty} I^n(a)
 = I^{\infty}(a) . \qedhere
\end{align*}
\end{proof}

Now, we turn to the upper bound, i.e.\ $\overline{I}^\infty \geqslant I^\infty$.

\begin{prop} \label{prop:pointfixe}
Let $a \geqslant 1$.
\begin{enumerate}
\item \label{prop:pointfixe_cinfini} If $c = \infty$, then
\[
\overline{I}^\infty(a) = \Lambda_W^*(a) = I^\infty(a) .
\]
\item \label{prop:pointfixe_cfini} If $c < \infty$,
\begin{align*}
\overline{I}^\infty(a)
 = \inf \ensavec{cw + \overline{I}^\infty(z)+\Lambda_W^*(s)}{w \geqslant 0,\ z \in \intervalleff{1}{a},\ s \in \intervalleff{1}{a},\ wz + s = a} .
% = \inf_{\substack{y\geqslant 0,\\ 1\leqslant z\leqslant a}}\Bigl[ \frac {cy}{z} + \overline{I}^\infty(z)+\Lambda_W^*(a-y)\Bigr]
% = \inf_{\substack{0\leqslant w\leqslant \frac{a-1}{z},\\ 1\leqslant z\leqslant a}}\Bigl[ cw + \overline{I}^\infty(z)+\Lambda_W^*(a-wz)\Bigr] .
\end{align*}
\end{enumerate}
\end{prop}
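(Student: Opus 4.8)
The plan is to establish, for every $a \geqslant 1$, matching lower and upper bounds for $\limsup_{r \to \infty} \frac 1r \log \Prob(Z_r \geqslant a) = -\overline{I}^\infty(a)$, the right-hand side meaning the announced expression in each case. I would first record a few properties of $\overline{I}^\infty$ on $\intervallefo{1}{\infty}$: it is nondecreasing, since $a \mapsto \Prob(Z_r \geqslant a)$ is nonincreasing; it satisfies $\overline{I}^\infty \leqslant \underline{I}^\infty \leqslant I^\infty \leqslant I^1 = \Lambda_W^*$ by Proposition \ref{prop:large_infini_lowerbound} and the monotonicity of $(I^n)_n$, so that $\overline{I}^\infty(1) = 0$; and, when $c < \infty$, it is strictly positive on $\intervalleoo{1}{\infty}$, because Markov's inequality and Corollary \ref{cor:moments}, item \ref{cor:moments_grands}, give $\Prob(Z_r \geqslant a) \leqslant \Espe[Z_r^{\eta r}]/a^{\eta r}$, whence $\overline{I}^\infty(a) \geqslant \sup_{0 < \eta < ce}(\eta \log a - \kappa(\eta)) > 0$ for $a > 1$ since $\kappa(\eta) = O(\eta^2)$. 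Finally, for $c = \infty$ the bound $\overline{I}^\infty \leqslant I^\infty = \Lambda_W^*$ already gives half of item \ref{prop:pointfixe_cinfini}, so there it remains only to prove $\limsup_r \frac 1r \log \Prob(Z_r \geqslant a) \leqslant -\Lambda_W^*(a)$.

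\emph{Lower bound on the probability.} Fix, for item \ref{prop:pointfixe_cfini}, a triple $(w,z,s)$ with $w \geqslant 0$, $z, s \in \intervalleff{1}{a}$, $wz + s = a$, and $\varepsilon > 0$. If $w > 0$ I would bound $\Prob(Z_r \geqslant a)$ from below, via \eqref{eq:E_infinite}, by the probability of the event on which $W_1 \geqslant w(1+\varepsilon)r$, $Z_{r,1} \geqslant z$, $\sum_{i=2}^r W_i \geqslant s(1+\varepsilon)r$ and $Z_{r,i} \geqslant 1 - \varepsilon$ for $i \in \{2,\dots,r\}$; for $\varepsilon$ small this forces $Z_r \geqslant wz(1+\varepsilon) + s(1-\varepsilon^2) \geqslant a$. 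By independence, taking $\frac 1r \log$ and $\limsup$ (the only nonconvergent factor is $\Prob(Z_{r,1} \geqslant z)$, of $\limsup$ equal to $-\overline{I}^\infty(z)$; the $W$-factors converge by \eqref{eq:tail_W} and Cramér's theorem, and $\Prob(Z_r \geqslant 1-\varepsilon)^{r-1} \to 1$ by Proposition \ref{prop:leftLD}), one gets $\overline{I}^\infty(a) \leqslant cw(1+\varepsilon) + \overline{I}^\infty(z) + \Lambda_W^*(s(1+\varepsilon))$, and then $\overline{I}^\infty(a) \leqslant cw + \overline{I}^\infty(z) + \Lambda_W^*(s)$ as $\varepsilon \to 0$, by continuity of $\Lambda_W^*$ (finite since $c < \infty$). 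The case $w = 0$ (so $s = a$) reduces to $\overline{I}^\infty(a) \leqslant \Lambda_W^*(a)$, obtained likewise from $\{\sum_i W_i \geqslant ar/(1-\varepsilon),\ Z_{r,i} \geqslant 1-\varepsilon\ \forall i\}$. Taking the infimum over $(w,z,s)$ gives $\overline{I}^\infty(a) \leqslant$ (right-hand side of item \ref{prop:pointfixe_cfini}).

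\emph{Upper bound on the probability --- the main difficulty.} The finite-tree proof of Theorem \ref{th:finite}, item \ref{th:finiteLD}, cannot be reproduced, since it applies the contraction principle to the inductively known large deviation principle for $Z_r^{n-1}$, and no large deviation principle for $Z_r$ is yet available. I would keep its combinatorial skeleton: via \eqref{eq:E_infinite}, fix $\varepsilon > 0$ and write $\Prob(Z_r \geqslant a) = \sum_{q=0}^r \binom rq P_{r,q}$, where $P_{r,q}$ is the probability that $Z_r \geqslant a$ and exactly $q$ of the $Z_{r,i}$ are $\geqslant 1 + \varepsilon$, but replace the contraction step by a direct, discretized (Nagaev-type) estimate. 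The tail $q \geqslant q_0$ is handled as before: $\sum_{q \geqslant q_0} \binom rq P_{r,q} \leqslant \sum_{q \geqslant q_0}(r\,\Prob(Z_r \geqslant 1+\varepsilon))^q$, and Corollary \ref{cor:moments}, item \ref{cor:moments_grands}, yields $\Prob(Z_r \geqslant 1+\varepsilon) \leqslant e^{-r\rho}$ for some $\rho > 0$ and all large $r$, so this tail is $\exp(-rq_0\rho(1+o(1)))$, negligible once $q_0$ is large. For $q = 0$, $P_{r,0} \leqslant \Prob(\sum_i W_i \geqslant ar/(1+\varepsilon))$ contributes $-\Lambda_W^*(a/(1+\varepsilon))$. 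For $1 \leqslant q \leqslant q_0 - 1$, $P_{r,q} \leqslant \Prob(\sum_{j \leqslant q} W_j Z_{r,j} + (1+\varepsilon)\sum_{i>q} W_i \geqslant ar)$, and the crucial new estimate is $\limsup_r \frac 1r \log \Prob(W Z_r \geqslant yr) \leqslant -g^\infty(y)$ with $g^\infty(y) \defeq \inf_{z \geqslant 1}[cy/z + \overline{I}^\infty(z)]$: one discretizes the value of $W$ into finitely many slabs $[k\delta r,(k+1)\delta r)$ (the slab $\{W \geqslant Nr\}$ negligible for $N$ large, by \eqref{eq:tail_W}), bounds $\Prob(W \in [k\delta r,(k+1)\delta r),\ Z_r \geqslant y/((k+1)\delta))$ by $\Prob(W \geqslant k\delta r)\,\Prob(Z_r \geqslant y/((k+1)\delta))$ using the $\limsup$-definition of $\overline{I}^\infty$ uniformly over the finitely many thresholds, and lets $\delta \to 0$, $N \to \infty$. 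A further discretization of the $q+1$ summands of $P_{r,q}$, together with the concavity of $g^\infty$ --- hence its subadditivity, as $g^\infty(0) = \overline{I}^\infty(1) = 0$ --- which collapses the multivariable infimum to a single $y$, gives $\limsup_r \frac 1r \log P_{r,q} \leqslant -B_\varepsilon$ where $B_\varepsilon = \inf\ensavec{cw + \overline{I}^\infty(z) + \Lambda_W^*(s)}{w \geqslant 0,\ z \geqslant 1,\ s \geqslant 0,\ wz + (1+\varepsilon)s \geqslant a}$. The principle of the largest term over $q \in \{0,\dots,q_0\}$ then gives $\overline{I}^\infty(a) \geqslant B_\varepsilon$ (since $B_\varepsilon \leqslant \Lambda_W^*(a/(1+\varepsilon))$ and $q_0\rho$ is large), and letting $\varepsilon \to 0$ --- estimating the perturbation $c\varepsilon s/z \leqslant c\varepsilon a$ and reducing the constraints to $z, s \in \intervalleff{1}{a}$, $wz + s = a$ exactly as in Proposition \ref{prop:Indecroissante1}, item \ref{prop:Indecroissante_recurrence2} (which uses only the monotonicity of $\overline{I}^\infty$ on $\intervallefo{1}{\infty}$, $\overline{I}^\infty(1) = 0$, $\overline{I}^\infty \leqslant \Lambda_W^*$, and, for $c < \infty$, its strict positivity on $\intervalleoo{1}{\infty}$) --- one reaches $\overline{I}^\infty(a) \geqslant$ (right-hand side of item \ref{prop:pointfixe_cfini}), proving item \ref{prop:pointfixe_cfini}.

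For $c = \infty$ the same scheme simplifies. Since $\kappa(\eta) < \infty$ for every $\eta > 0$ (as $ce = \infty$) and, by Proposition \ref{prop:moments_W}, item \ref{prop:moments_W_momk}, $\Espe[W^h] \leqslant h!\,e^{-Mh}$ eventually for every $M$, the moment bound $\Espe[(W Z_r)^{\eta r}] = \Espe[W^{\eta r}]\Espe[Z_r^{\eta r}]$ gives $\limsup_r \frac 1r \log \Prob(W Z_r \geqslant yr) = -\infty$ for every $y > 0$; hence the $W_j Z_{r,j}$ contribute only $o(r)$ to the exponent of $P_{r,q}$, so $\limsup_r \frac 1r \log P_{r,q} \leqslant -\Lambda_W^*(a/(1+\varepsilon))$ for all $q < q_0$, and the principle of the largest term followed by $\varepsilon \to 0$ (continuity of $\Lambda_W^*$) yields $\overline{I}^\infty(a) \geqslant \Lambda_W^*(a)$, hence item \ref{prop:pointfixe_cinfini}. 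The main obstacle throughout is the discretized upper bound for $\Prob(W Z_r \geqslant yr)$ --- which must rest on the moment estimates of Corollary \ref{cor:moments} and Proposition \ref{prop:moments_W} in place of an exponential Markov inequality --- together with the bookkeeping needed to make the threshold on $r$ uniform over the finitely many discretization cells while passing $\delta \to 0$ and $\varepsilon \to 0$; everything else transcribes the finite-tree argument with $I^{n-1}$ replaced by $\overline{I}^\infty$.
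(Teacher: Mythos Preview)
Your proposal is correct and follows essentially the same architecture as the paper's proof: the same lower bound via a single-branch event, the same decomposition $\Prob(Z_r\geqslant a)=\sum_q\binom{r}{q}P_{r,q}$, the same handling of the tail $q\geqslant q_0$ via Corollary~\ref{cor:moments}, item~\ref{cor:moments_grands}, and the same passage $\varepsilon\to 0$ and reduction of constraints as in Proposition~\ref{prop:Indecroissante1}, item~\ref{prop:Indecroissante_recurrence2}.

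The one point where you diverge is the treatment of $P_{r,q}$ for $1\leqslant q\leqslant q_0-1$. You assert that the finite-tree argument ``cannot be reproduced'' because no large deviation principle for $Z_r$ is available, and you substitute an explicit slab discretization of $W$ to obtain $\limsup_r\frac1r\log\Prob(WZ_r\geqslant yr)\leqslant -g^\infty(y)$. In fact the paper \emph{does} reproduce the finite-tree argument verbatim: it simply invokes ``the upper bound in the contraction principle'' for the map $(y_1,\dots,y_q,s)\mapsto y_1+\dots+y_q+(1+\varepsilon)s$, the point being that only the $\limsup$ half of a large deviation estimate is needed on each factor $(W_iZ_{r,i}/r)$ and $(\sum_{i>q}W_i/r)$, and that is exactly what $\overline{I}^\infty$ provides. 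Your discretization is a hands-on proof of precisely this upper-bound contraction step, so the two routes coincide; yours is more self-contained, the paper's is terser. One small omission in your sketch: to restrict the infimum to $z\in\intervalleff{1}{a}$ the paper also uses that $\overline{I}^\infty(z)\to\infty$ as $z\to\infty$ (from \eqref{eq:Isupinfini_minoration}), not merely its strict positivity on $\intervalleoo{1}{\infty}$; you have the ingredient but did not state it.
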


\begin{proof}[Proof of Proposition \ref{prop:pointfixe}]
Let $a \geqslant 1$. We follow the same lines as in Section \ref{sec:proof_finiteLD} for $Z_r^n$. Let $\varepsilon > 0$.  For all $r\in \N^* \setminus \{ 1 \}$ and for $q\in \intervallentff{0}{r}$, we define
\[
P_{r,q} \defeq \Prob\left(Z_r \geqslant a,\ Z_{r,1},\dots,Z_{r,q}\geqslant 1+\varepsilon,\ Z_{r,q+1},\dots, Z_{r,r} < 1+\varepsilon \right) ,
\]
so that 
\[
\Prob\left( Z_r \geqslant a \right)= \sum_{q=0}^r \binom{r}{q} P_{r,q} .
\]
Let $q_0 \in \intervallentff{0}{r}$ and $\varepsilon' \in \intervalleoo{0}{\varepsilon}$. By the definition of $\overline{I}^\infty$,
\begin{align*}
\sum_{q=q_0}^r \binom{r}{q} P_{r,q}
 &\leqslant \sum_{q=q_0}^r \binom{r}{q} \Prob(Z_r \geqslant 1+\varepsilon)^q \\
 &\leqslant \sum_{q=q_0}^r \bigl(re^{-r\overline{I}^{\infty}(1+\varepsilon')}\bigr)^q \\
 &\leqslant \frac{\bigl(re^{-r\overline{I}^{\infty}(1+\varepsilon')}\bigr)^{q_0}}{1-re^{-r\overline{I}^{\infty}(1+\varepsilon')}} .
\end{align*}

By Markov's inequality and Corollary \ref{cor:moments}, item \ref{cor:moments_grands}, for all $z > 1$, for all $\eta \in \intervalleoo{0}{ce}$,
\begin{align} \label{eq:Isupinfini_minoration}
- \overline{I}^\infty(z)
 = \limsup_{r \to \infty} \frac{1}{r}\log \Prob(Z_r\geqslant z)
 \leqslant \limsup_{r \to \infty} \frac 1r \log \Espe[Z_r^{\eta r}]-\eta \log(z)
 \leqslant \kappa(\eta)-\eta \log(z) . 
 %\sim -\eta \log(z)
\end{align}
%as $\eta \to 0$.
Hence $\overline{I}^\infty(z) > 0$, since $\kappa(\eta) = O(\eta^2)$ as $\eta \to 0$. Therefore, we may choose $q_0$ such that $q_0 \overline{I}^{\infty}(1+\varepsilon') > \overline{I}^{\infty}(a)$ and
\begin{align*}
\sum_{q=q_0}^r \binom{r}{q} P_{r,q}
 & \leqslant e^{-r \overline{I}^{\infty}(a)}
\end{align*}
as soon as $r$ is large enough. Now, 
\[
\lim_{r\to\infty} \frac 1r \log P_{r,0} \leqslant \lim_{r\to\infty} \frac 1r \log \Prob\Bigl(\sum_{i=1}^r W_i\geqslant \frac{ra}{1+\varepsilon}\Bigr)=  -\Lambda_W^* \Bigl(\frac{a}{1+\varepsilon}\Bigr).
\]
Finally, let $q\in \intervallentff{1}{q_0-1}$. Using the upper bound in the contraction principle for the continuous map $(y_1, \dots, y_q, s) \mapsto y_1 + \dots + y_q + (1+\varepsilon) s$ and $((W_1 Z_{r,1}, \dots, W_q Z_{r,q}, W_{q+1}+\dots+W_r) / r)_{r \geqslant 2}$, we get
\begin{align*}
\limsup_{r \to \infty} 
 &\ \frac 1r \log P_{r,q} \\
 & \leqslant \limsup_{r \to \infty} \frac 1r \log \Prob\left(W_1 Z_{r,1}+\dots+ W_q Z_{r,q}+\left(W_{q+1}+\dots+W_r\right)(1+\varepsilon) \geqslant ra, \right. \\
& \hspace{4cm} \left. Z_{r,1},\dots,Z_{r,q}\geqslant 1+\varepsilon,\ Z_{r,q+1},\dots,Z_{r,r}< 1+\varepsilon \right)\\
 & \leqslant \limsup_{r \to \infty} \frac 1r  \log \Prob\left(W_1 Z_{r,1}+\dots+ W_q Z_{r,q}+\left(W_{q+1}+\dots+W_r\right)(1+\varepsilon) \geqslant ra \right) \\
 & \leqslant  -\inf \Bigl\{g(y_1)+\dots+g(y_q)+\Lambda_W^*(s)\ ; \\
 & \hspace{3cm} y_1,\ \dots,\ y_q \geqslant 0,\ s \geqslant 0,\ y_1+\dots+y_q+(1+\varepsilon )s\geqslant a \Bigr\} \\
 & = -\inf \ensavec{g(y)+\Lambda_W^*(s)}{y \geqslant 0,\ s \geqslant 0,\ y + (1+\varepsilon) s \geqslant a} ,
\end{align*}
where $g$ is the concave function defined by
\begin{align*}
g(y)
 & \defeq \inf \ensavec{cw + \overline{I}^\infty(z)}{w \geqslant 0,\ z > 0,\ wz = y}  = \inf \ensavec{\frac {cy}{z} + \overline{I}^\infty(z)}{z > 0}
\end{align*}
and satisfying the upper bound of large deviations for $(W Z_r / r)_{r \geqslant 2}$ at speed $(r)_{r \geqslant 2}$. Then, proceeding as in the proof of Theorem \ref{th:finite}, item \ref{th:finiteLD}, we obtain
\begin{align*}
\overline{I}^\infty(a)
 & \geqslant \sup_{0 < \varepsilon \leqslant a-1} \inf \ensavec{cw + \overline{I}^\infty(z) + \Lambda_W^*(s)}{w \geqslant 0,\ z \in\intervalleff{1}{a},\ s \in \intervalleff{1}{\frac{a}{1+\varepsilon}},\ wz + (1 + \varepsilon)s = a} .
% & \geqslant \sup_{0 < \varepsilon < a-1} \inf_{\substack{y\geqslant 0,\\ z\geqslant 1}}\Bigl[ \frac {cy}{z} + \overline{I}^\infty(z)+\Lambda_W^*\Bigl(\frac{a-y}{1+\varepsilon}\Bigr)\Bigr] .
\end{align*}
If $c=\infty$, then, since $\Lambda_W^*$ is left continuous at $a$,
\begin{align*}
\overline{I}^\infty(a)
 \geqslant \sup_{\varepsilon >0} \Lambda_W^*\Bigl(\frac{a}{1+\varepsilon}\Bigr)
 = \Lambda_W^*(a)
 = I^\infty(a).
\end{align*}
Combining this inequality with \eqref{def:Isup_inf} and Proposition \ref{prop:pointfixe}, item \ref{prop:pointfixe_cinfini} is proved.

\medskip

Let us turn to item \ref{prop:pointfixe_cfini} and assume that $c<\infty$. We have
\begin{align*}
\overline{I}^\infty(a)
 & \geqslant\sup_{0<\varepsilon \leqslant a-1} \inf_{\substack{1 \leqslant z \leqslant a,\\ 1\leqslant s\leqslant \frac{a}{1+\varepsilon}}}\Bigl[ \frac {c(a-s)}{z} + \overline{I}^\infty(z)+\Lambda_W^*(s)-\frac{c\varepsilon s}{z}\Bigr]\\
 & \geqslant\sup_{0<\varepsilon \leqslant a-1} \inf_{\substack{1 \leqslant z \leqslant a,\\ 1\leqslant s\leqslant a}}\Bigl[ \frac {c(a-s)}{z} + \overline{I}^\infty(z)+\Lambda_W^*(s)-c\varepsilon a\Bigr]\\
  & = \inf_{\substack{1 \leqslant  \leqslant a \\ 1\leqslant s \leqslant a}}\Bigl[ \frac {c(a-s)}{z} + \overline{I}^\infty(z)+\Lambda_W^*(s)\Bigr]\\
  & = \inf \ensavec{c w + \overline{I}^\infty(z)+\Lambda_W^*(s)}{w \geqslant 0,\ z \in \intervalleff{1}{a},\ s \in \intervalleff{1}{a},\ wz + s = a} ,
\end{align*}
where the latter equality follows from the fact that $\overline{I}^\infty$ is nondecreasing on $\intervallefo{1}{\infty}$ by the very definition in \eqref{def:Isup_inf}, and even $\overline{I}^\infty(z) > \overline{I}^\infty(a)$ for all $z$ large enough by \eqref{eq:Isupinfini_minoration}, and $cw + \Lambda_W^*(s) > 0$ for all $(w, s) \in \intervallefo{0}{\infty} \times \intervallefo{1}{\infty} \setminus \{ (0, 1) \}$.

\medskip

Second, by definition and since $\Prob(Z_r\geqslant 1-\varepsilon)\to 1$ as $r\to \infty$, one has, for all $\varepsilon>0$, $z \geqslant 1$, and $s \in \intervalleff{1}{a}$,
\begin{align*}
-\overline{I}^\infty(a)
 & = \limsup_{r\to\infty} \frac 1r \log \Prob (Z_r\geqslant a) \\
 & \geqslant \limsup_{r\to\infty} \frac 1r \log \Prob (W_1Z_{r,1}+(W_2+\dots+W_r)(1-\varepsilon)\geqslant ra)+\frac{r-1}{r} \log \Prob(Z_r\geqslant 1-\varepsilon) \\
 & \geqslant \limsup_{r\to\infty} \frac 1r \log \Prob \Bigl(W_1\geqslant \frac{r(a - s(1 - \varepsilon))}{z},\ Z_{r,1}\geqslant z,\ \frac1r (W_2+\dots+W_r) \geqslant s \Bigr) \\
 & = - \Bigl( \frac{c(a - s)}{z}+\overline{I}^\infty(z)+\Lambda_W^*(s) + \frac{c s \varepsilon}{z} \Bigr)
\end{align*}
from which we deduce that
\begin{align*}
\overline{I}^\infty(a)
 & \leqslant \inf_{\varepsilon > 0} \inf_{\substack{1 \leqslant z \leqslant a \\ 1 \leqslant s \leqslant \frac{a}{1-\varepsilon}}} \Bigl[\frac{c(a - s)}{z}+\overline{I}^\infty(z)+\Lambda_W^*(s) + \frac{c s \varepsilon}{z} \Bigr] \\
%   & \leqslant \inf_{\substack{1 \leqslant z \leqslant a \\ 1 \leqslant s \leqslant a}} \inf_{\varepsilon > 0} \Bigl[\frac{c(a - s)}{z}+\overline{I}^\infty(z)+\Lambda_W^*(s) + \frac{c s \varepsilon}{z} \Bigr] \\
  & \leqslant\inf_{\substack{1 \leqslant z \leqslant a \\ 1 \leqslant s \leqslant a}} \Bigl[\frac{c(a - s)}{z}+\overline{I}^\infty(z)+\Lambda_W^*(s) \Bigr] \\
  & = \inf \ensavec{cw + \overline{I}^\infty(z)+\Lambda_W^*(s)}{w \geqslant 0,\ z \in \intervalleff{1}{a},\ s \in \intervalleff{1}{a},\ wz + s = a} . \qedhere
\end{align*}
%\[
%\overline{I}^\infty(a)
% = \inf_{\substack{y\geqslant 0,\\ 1\leqslant z\leqslant a}}\Bigl[ \frac {cy}{z} + \overline{I}^\infty(z)+\Lambda_W^*(a-y)\Bigr]
% = \inf_{\substack{1\leqslant z\leqslant a,\\ 0\leqslant w\leqslant \frac{a-1}{z}}} \Bigl[ cw + \overline{I}^\infty(z)+\Lambda_W^*(a-wz)\Bigr] . \qedhere
%\]
\end{proof}

In view of the latter proposition, it remains to prove the inequality $\overline{I}^\infty \geqslant I^\infty$ in the case $c < \infty$.

\begin{lem}\label{lem:Isup_continuous}
The nondecreasing function $\overline{I}^\infty$ is continuous.
\end{lem}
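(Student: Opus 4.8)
The plan is to prove continuity of the nondecreasing function $\overline{I}^\infty$ by leveraging the fixed-point relation established in Proposition \ref{prop:pointfixe}, item \ref{prop:pointfixe_cfini}, together with the continuity properties of $h$ obtained in Lemma \ref{lem:h}. The key observation is that the identity
\[
\overline{I}^\infty(a)
 = \inf \ensavec{cw + \overline{I}^\infty(z)+\Lambda_W^*(s)}{w \geqslant 0,\ z \in \intervalleff{1}{a},\ s \in \intervalleff{1}{a},\ wz + s = a}
\]
is exactly the recursion \eqref{eq:Inrecurrence2} with $\overline{I}^\infty$ playing the role of both $I^{n-1}$ and $I^n$; hence, by the very same Legendre-transform computation as in the proof of Proposition \ref{prop:Indecroissante1}, item \ref{prop:Indecroissante_recurrence2}, it can be rewritten as
\[
\overline{I}^\infty(a) = \inf_{1 \leqslant z \leqslant a} \bigl[ \overline{I}^\infty(z) + h(a, z) \bigr]
\]
with $h$ as defined in \eqref{eq:def_h}. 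I would first record this rewriting, checking that the argument of that proof used only monotonicity of $\overline{I}^\infty$ on $\intervallefo{1}{\infty}$ (already available) and positivity of $cw + \Lambda_W^*(s)$ off $(0,1)$ (also already noted in the proof of Proposition \ref{prop:pointfixe}).

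The second step is to mimic \emph{verbatim} the proof of Proposition \ref{prop:Indecroissante2}, item \ref{prop:Indecroissante_continuous}. Fix $a_0 \geqslant 1$ and $\varepsilon > 0$; since $\overline{I}^\infty$ is nondecreasing, it suffices to exhibit $a_1 > a_0$ with $\overline{I}^\infty(a_1) \leqslant \overline{I}^\infty(a_0) + \varepsilon$. By Lemma \ref{lem:h}, $h$ is uniformly continuous on the compact set $D(a_0+1)$, so there is $a_1 \in \intervalleoo{a_0}{a_0+1}$ with $h(a_1, z) < h(a_0, z) + \varepsilon$ for all $z \in \intervalleff{1}{a_0}$. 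Then, restricting the infimum defining $\overline{I}^\infty(a_1)$ to $z \in \intervalleff{1}{a_0} \subseteq \intervalleff{1}{a_1}$,
\[
\overline{I}^\infty(a_1)
 \leqslant \inf_{1 \leqslant z \leqslant a_0} \bigl[ \overline{I}^\infty(z) + h(a_1, z) \bigr]
 \leqslant \inf_{1 \leqslant z \leqslant a_0} \bigl[ \overline{I}^\infty(z) + h(a_0, z) \bigr] + \varepsilon
 = \overline{I}^\infty(a_0) + \varepsilon,
\]
which gives right-continuity; left-continuity is automatic from monotonicity since we already know (via Proposition \ref{prop:large_infini_lowerbound} and $\underline{I}^\infty \leqslant I^\infty$, or directly) that $\overline{I}^\infty$ is finite near any point of $\intervallefo{1}{\infty}$ where we need it — more carefully, for a nondecreasing function left-continuity at $a_0$ follows once we bound $\overline{I}^\infty(a_0) - \overline{I}^\infty(a_1)$ for $a_1 < a_0$, which the same $h$-estimate delivers by symmetry of the uniform-continuity modulus. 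Continuity at the left endpoint $a = 1$ is handled separately and trivially, since $\overline{I}^\infty(1) = 0 = \Lambda_W^*(1)$ and $\overline{I}^\infty \leqslant I^\infty$ with $I^\infty$ continuous.

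The main subtlety — and the point I would be most careful about — is the appearance of $\overline{I}^\infty$ inside its own defining infimum: a priori $\overline{I}^\infty$ could take the value $+\infty$ on part of $\intervallefo{1}{\infty}$, in which case the clean reduction to \eqref{eq:Inh} and the uniform-continuity argument would need the qualification $\{\overline{I}^\infty < \infty\}$, exactly as in the light-tailed statements of Proposition \ref{prop:Indecroissante1}. However, Proposition \ref{prop:large_infini_lowerbound} gives $\overline{I}^\infty \geqslant \underline{I}^\infty$ and hence no upper bound, so one should instead note that the values of $a$ we care about are those with $\overline{I}^\infty(a) < \infty$; on the complementary region continuity is vacuous (a nondecreasing extended-real function that is $+\infty$ beyond some point is continuous there iff it is left-continuous, which again follows from the $h$-estimate, or one simply observes $\Prob(Z_r \geqslant a)$ may genuinely decay super-exponentially). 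In practice the cleanest route is: restrict attention to $\{\overline{I}^\infty < \infty\}$, which by monotonicity is an interval $\intervallefo{1}{b}$ or $\intervalleff{1}{b}$, run the uniform-continuity argument there exactly as above, and dispatch the (at most one) possible jump at $b$ and the behaviour beyond $b$ by monotonicity. With this bookkeeping the proof is a routine transcription of the finite-tree argument, so I do not expect any genuine obstacle beyond getting the $\{\overline{I}^\infty < \infty\}$ caveats right.
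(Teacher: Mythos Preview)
Your proof is correct and takes essentially the same approach as the paper, which simply refers back to the argument of Proposition \ref{prop:Indecroissante2}, item \ref{prop:Indecroissante_continuous}, applied via the fixed-point relation of Proposition \ref{prop:pointfixe}. One small slip in your side discussion: the inequality from \eqref{def:Isup_inf} goes the other way, $\overline{I}^\infty \leqslant \underline{I}^\infty$, so together with Proposition \ref{prop:large_infini_lowerbound} you actually get $\overline{I}^\infty \leqslant \underline{I}^\infty \leqslant I^\infty < \infty$ on $\intervallefo{1}{\infty}$, which dissolves your finiteness worry entirely and makes the extra bookkeeping unnecessary.
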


\begin{proof}[Proof of Lemma \ref{lem:Isup_continuous}]
Let $a_0 \geqslant 1$ and $\varepsilon > 0$. Since $\overline{I}^\infty$ is nondecreasing, it suffices to prove that there exists $a_1 > a_0$ such that $\overline{I}^\infty(a_1) \leqslant \overline{I}^\infty(a_0) + \varepsilon$. This follows the same lines as in the proof of Proposition \ref{prop:Indecroissante2}, item \ref{prop:Indecroissante_continuous}.
\end{proof}

For all $a \geqslant 1$, we may define
\[
z(a) \defeq \inf \argmin \ensavec{\overline{I}^\infty(z) + h(a, z)}{z \in \intervalleff{1}{a}}
\]
Recall that $a_W \defeq \inf \ensavec{a \geqslant 1}{a a^* \geqslant c}$.
Now if $a \in \intervalleff{1}{a_W}$ and $z \in \intervalleff{1}{a}$, then $z \leqslant c / a^*$, so $h(a, z) = \Lambda_W^*(a)$, whence $z(a) = 1$ and
\begin{align} \label{eq:Isupinfini_lambdastar}
\overline{I}^\infty(a)
 & = \Lambda_W^*(a)
 \geqslant I^\infty(a) .
\end{align}
\begin{lem}\label{lem:z_a}
For all $A > a_W$, there exists $\eta(A) > 0$ such that, for all $a \in \intervalleff{a_W}{A}$, $z(a) < a - \eta(A)$.
\end{lem}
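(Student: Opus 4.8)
The plan is to reproduce, for $\overline{I}^\infty$, the compactness-and-uniform-continuity argument already carried out for the functions $I^n$ in the proof of Proposition~\ref{prop:pt_rupture}; we stay in the case $c<\infty$, which is the only one in which Lemma~\ref{lem:z_a} is used. First I would record that, just as \eqref{eq:Inh} is obtained from \eqref{eq:Inrecurrence2} by the Legendre-transform rewriting built into the definition \eqref{eq:def_h} of $h$, Proposition~\ref{prop:pointfixe}, item~\ref{prop:pointfixe_cfini}, yields
\[
\overline{I}^\infty(a)=\inf_{1\leqslant z\leqslant a}\bigl[\overline{I}^\infty(z)+h(a,z)\bigr]\qquad(a\geqslant 1).
\]
Since $\overline{I}^\infty$ is continuous (Lemma~\ref{lem:Isup_continuous}) and $h$ is continuous (Lemma~\ref{lem:h}), for fixed $a$ the map $z\mapsto\overline{I}^\infty(z)+h(a,z)$ is continuous on the compact set $\intervalleff{1}{a}$, so its infimum is attained; in particular $z(a)$ is a genuine minimizer and $\overline{I}^\infty(z(a))+h(a,z(a))=\overline{I}^\infty(a)$.

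Next I would import the estimate on $h$ established inside the proof of Proposition~\ref{prop:pt_rupture}: for every $a\geqslant a_W$ and every $z\in\intervalleff{1}{a}$,
\[
h(a,z)\ \geqslant\ h(a,a)\ =\ c-\Lambda_W(c/a)\ \geqslant\ c-\Lambda_W(c/a_W)\ =\ \Lambda_W^*(a_W)\ \eqdef\ \rho\ >\ 0 .
\]
Then, given $A>a_W$, I would use the uniform continuity of $\overline{I}^\infty$ on the compact $\intervalleff{1}{A}$ (Lemma~\ref{lem:Isup_continuous}) to choose $\eta(A)\in\intervalleoo{0}{a_W-1}$ so that $\lvert\overline{I}^\infty(y)-\overline{I}^\infty(z)\rvert<\rho$ whenever $y,z\in\intervalleff{1}{A}$ with $\lvert y-z\rvert\leqslant\eta(A)$. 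The requirement $\eta(A)<a_W-1$ is there precisely so that $a-\eta(A)>1$, hence $\intervalleff{a-\eta(A)}{a}\subseteq\intervalleff{1}{A}$, for every $a\in\intervalleff{a_W}{A}$.

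Finally I would conclude by contradiction, copying the last display of the proof of Proposition~\ref{prop:pt_rupture}: fix $a\in\intervalleff{a_W}{A}$ and assume $z(a)\geqslant a-\eta(A)$. Then $z(a)\in\intervalleff{a-\eta(A)}{a}\subseteq\intervalleff{1}{A}$, so $\overline{I}^\infty(z(a))>\overline{I}^\infty(a)-\rho$ by the choice of $\eta(A)$, while $h(a,z(a))\geqslant\rho$ by the estimate above; adding these and using minimality,
\[
\overline{I}^\infty(a)=\overline{I}^\infty(z(a))+h(a,z(a))>\bigl(\overline{I}^\infty(a)-\rho\bigr)+\rho=\overline{I}^\infty(a),
\]
which is absurd; hence $z(a)<a-\eta(A)$. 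The only genuinely substantive input is that $\rho=\Lambda_W^*(a_W)$ is strictly positive and that the bound $h(a,z)\geqslant\rho$ holds uniformly over $a\in\intervalleff{a_W}{A}$ — both of which are already proved in Proposition~\ref{prop:pt_rupture} (using in particular $a_W>1$) — so no new difficulty arises beyond transcribing that argument with $\overline{I}^\infty$ in place of $I^n$.
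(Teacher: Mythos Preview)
Your proposal is correct and follows essentially the same argument as the paper's proof: the uniform lower bound $h(a,z)\geqslant\rho=\Lambda_W^*(a_W)>0$ for $a\geqslant a_W$, uniform continuity of $\overline{I}^\infty$ on $\intervalleff{1}{A}$ to select $\eta(A)<a_W-1$, and then the comparison $\overline{I}^\infty(z)+h(a,z)>\overline{I}^\infty(a)$ for $z\in\intervalleff{a-\eta(A)}{a}$. The only cosmetic difference is that you phrase the conclusion by contradiction using the attained minimizer $z(a)$, whereas the paper argues directly that no $z$ in $\intervalleff{a-\eta(A)}{a}$ can realize the infimum.
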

\begin{proof}[Proof of Lemma \ref{lem:z_a}]
For all $a \geqslant a_W$ and $z \in \intervalleff{1}{a}$,
\begin{align*}
h(a, z)
 & \geqslant h(a, a)
 = c - \Lambda_W(c / a)
 \geqslant c - \Lambda_W(c / a_W)
 = \Lambda_W^*(a_W)
 \eqdef \rho
 > 0 .
\end{align*}
Now let $A > a_W$. Since the function $\overline{I}^\infty$ is uniformly continuous on $\intervalleff{1}{A}$, there exists $\eta(A) > 0$ such that, for all $(a, z) \in \intervalleff{1}{A}^2$, with $a - \eta(A) \leqslant z \leqslant a$, $\overline{I}^\infty(z) > \overline{I}^\infty(a) - \rho$ (note that it implies $\eta(A) < a_W - 1$). Then, for all $a \in \intervalleff{a_W}{A}$ and $z \in \intervalleff{a-\eta(A)}{a}$,
\begin{align*}
\overline{I}^\infty(z) + h(a, z)
 & > (\overline{I}^\infty(a) - \rho) + \rho
 = \overline{I}^\infty(a) ,
\end{align*}
whence the conclusion of the lemma.
\end{proof}
Let $E \defeq \ensavec{a \geqslant 1}{\forall z \in \intervalleff{1}{a}\ \overline{I}^\infty(z) \geqslant I^\infty(z)}$ and let $a_\infty = \sup(E)$. By \eqref{eq:Isupinfini_lambdastar}, we know that $a_\infty \geqslant a_W$. The aim is to prove that $a_\infty = \infty$. Assume that $a_\infty \in \intervallefo{a_W}{\infty}$. Let $A = a_\infty + 1 > a_W$ and $\eta(A)$ be defined as in Lemma \ref{lem:z_a}. Let $a$ be such that $a_\infty < a \leqslant a_\infty + \min(1, \eta(A))$. One has
\begin{align*}
\overline{I}^\infty(a)
 & = \inf_{1 \leqslant z < a - \eta(A)} \Bigl[ \overline{I}^\infty(z) + h(a, z) \Bigr] \\
 & = \inf_{1 \leqslant z < a - \eta(A)} \Bigl[ I^\infty(z) + h(a, z) \Bigr]
 \geqslant \inf_{1 \leqslant z \leqslant a} \Bigl[ I^\infty(z) + h(a, z) \Bigr]
 = I^\infty(a) ,
\end{align*}
so $a \in E$ which is a contradiction, and we conclude that $a_\infty = \infty$.

\subsection{Proof of Theorem \ref{th:infinite}, item \ref{th:infiniteVL} (very large deviations)}

\emph{Left deviations} ---
%By the law of large numbers, for $a>0$, $\Prob(Z_r\leqslant r^\alpha a) \to 1$ as $r \to \infty$ and,
For $a<0$, $\Prob(Z_r\leqslant r^\alpha a) = 0$. Assume now that $a=0$. Note that $\Prob(Z_r\leqslant 0) = \Prob(Z_r = 0)$. Let $p \defeq \Prob(W = 0)$. If $p = 0$, then $\Prob(Z_r = 0) = 0$. Otherwise, $0 < p < 1$ (recall that $\Espe[W]=1$ implies $p < 1$) and
\begin{align*}
\Prob(Z_r = 0)
% \geqslant \Prob(Z_r^n =0)
 \geqslant \Prob(Z_r^1 = 0)
 = p^r.
\end{align*}
Moreover, using \eqref{eq:E_infinite},
\begin{align*}
\Prob(Z_r=0)
 & = \Prob(WZ_r=0)^r
 = (1-\Prob(W\neq 0)\Prob( Z_r \neq 0))^r
 = (p+(1-p)\Prob(Z_r=0))^r .
\end{align*}
Then $\Prob(Z_r=0)$ is a fixed point of the strictly convex function $f \colon x \in \intervalleff{0}{1} \mapsto (p+(1-p)x)^r$.
For $r > \exp \Espe[W \log(W)]$, recalling that $\Prob(Z_r = 0) < 1$, $f$ has exactly two fixed points: $\Prob(Z_r = 0)$ and $1$.
%Remarque --- On peut aussi dire que $f'(1) = r(1-p) > 1$ et $f(0) = p^r > 0$. 
Let $\varepsilon>0$ and $x_{r, \varepsilon} \defeq p^{r(1-\varepsilon)}$. Then, for all $r$ large enough, $f(x_{r, \varepsilon}) < x_{r, \varepsilon}$, whence $\Prob(Z_r=0) < x_{r, \varepsilon}$.
% , leading to
% \[
% \frac{1}{r} \log \Prob(Z_r = 0) \leqslant (1-\varepsilon) \log(p).
% \]
Then we conclude that
\begin{align*}
-\frac{1}{r \log(r)} \log \Prob(Z_r\leqslant r^\alpha a) \to \begin{cases}
\infty & \text{for $a < 0$ or ($a = 0$ and $p = 0$)} \\
0 & \text{for ($a = 0$ and $p > 0$).}
%0 & \text{for $a>0$}\\
\end{cases}
\end{align*}

% \begin{align*}
% -\frac 1r \log \Prob(Z_r\leqslant r^\alpha a) \to \begin{cases}
% %0 & \text{for $a>0$}\\
% -\log(p) & \text{for $a=0$}\\
% \infty & \text{for $a<0$.}
% \end{cases}
% \end{align*}

\emph{Right deviations} ---  The case $a = 0$ is obvious. Assume $a > 0$. Let us prove the minoration. Recall the definition of $Z_{r, 1^n}$ given after Equation \eqref{eq:Zr1kin}. One has
\begin{align*}
\frac{1}{r\log(r)}\log \Prob(Z_r\geqslant r^\alpha a)
&\geqslant  \frac{1}{r\log(r)}\log \Prob\Bigl(\frac{W_{1^1}\cdots W_{1^n}}{r^n}\geqslant r^\alpha a,\, Z_{r, 1^n} \geqslant 1\Bigr)\\
&\geqslant \frac{1}{r\log(r)}\log \Prob\Bigl(\frac{W_{1^1}\cdots W_{1^n}}{r^n}\geqslant r^\alpha a\Bigr)+\frac{1}{r\log(r)}\log\Prob( Z_r \geqslant 1).
\end{align*}
By \cite[Theorem 1.2]{liu2000limit},
\[
\frac{1}{r\log(r)}\log\Prob( Z_r \geqslant 1)
 \xrightarrow[r \to \infty]{} 0.
\]
Now, $ n(r^\alpha a)^{1/n}$ is minimum for $n = n_r \defeq \floor{\log(r^\alpha a)}$. So
\begin{align*}
\frac{1}{r \log (r)} \log \Prob\Bigl(\frac{W_{1^1} \cdots W_{1^{n_r}}}{r^{n_r}}\geqslant r^\alpha a\Bigr)
&\geqslant \frac{n_r}{r \log (r)} \log \Prob\Bigl(W\geqslant r( r^\alpha a)^{1/n_r}\Bigr)
 \xrightarrow[r \to \infty]{} -c \alpha e.
%&\sim_{r\to \infty} -\log(r^\alpha a) c r( r^\alpha a)^{1/\log(r^\alpha a)}\\
%&= -c \alpha e r \log (r) - c e r \log(a)  \\
%&\sim_{r\to \infty}-c \alpha e .
\end{align*}

As for the upper bound, by Markov's inequality and Corollary \ref{cor:moments}, item \ref{cor:moments_grands}, for all $\eta \in \intervalleoo{0}{ce}$, there exists $\kappa(\eta) \in \intervalleoo{0}{\infty}$ such that, for all $r \geqslant 1$,
\begin{align*}
\log \Prob(Z_r\geqslant r^\alpha a)
 & \leqslant \log \Espe[Z_r^{\eta r}]-\eta r \log(r^\alpha a)\\
&\leqslant \kappa(\eta)r-\eta r \alpha \log(r) -\eta r \log(a)\\
& \sim -r\log(r)\eta \alpha ,
\end{align*}
as ${r\to \infty}$ and we conclude letting $\eta \to ce$.

\appendix

\section{Deviation estimates and large deviation principles}

\begin{lem} \label{lem:dev2ldp}
Let $(Y_r)_{r\geqslant1}$ be a sequence of real-valued random variables and $(v_r)_{r\geqslant 1}$ be a sequence of positive numbers diverging to infinity.
Assume that there exist $m \in \R$ and two functions $I_- \colon \intervalleof{-\infty}{m} \to \intervalleff{0}{\infty}$ and $I_+ \colon \intervallefo{m}{\infty} \to \intervalleff{0}{\infty}$ such that $I_-$ is decreasing (or infinite), $I_+$ is increasing (or infinite),
\[
\forall a \in \intervalleof{-\infty}{m} \quad \frac{1}{v_r} \log \Prob(Y_r \leqslant a) \xrightarrow[r \to \infty]{} - I_-(a)
\]
and
\[
\forall a \in \intervallefo{m}{\infty} \quad \frac{1}{v_r} \log \Prob(Y_r \geqslant a) \xrightarrow[r \to \infty]{} - I_+(a) .
\]
Then, the sequence $(Y_r)_{r \geqslant 1}$ satisfies a large deviation principle at speed $(v_r)_{r \geqslant 1}$ with rate function
\[
I(a) = \begin{cases}
I_-(a) & \text{if $a < m$} \\
\min\{I_-(m), I_+(m)\} & \text{if $a = m$} \\
I_+(a) & \text{if $a > m$.}
\end{cases}
\]
\end{lem}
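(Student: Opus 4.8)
The plan is to establish the two halves of the definition of a large deviation principle at speed $(v_r)_{r\geqslant1}$ — the exponential upper bound over closed sets and the exponential lower bound over open sets — after checking that $I$ is a legitimate rate function. For the latter, one observes that $I$ is nonincreasing on $\intervalleof{-\infty}{m}$ and nondecreasing on $\intervallefo{m}{\infty}$ with $I(m)=\min\{I_-(m),I_+(m)\}$, which makes it lower semicontinuous; this is immediate in every application of the lemma in this paper, where $I_-$ and $I_+$ are moreover continuous. Two standard reductions then simplify matters: since every open subset of $\R$ is a countable union of open intervals and $\Prob$ is monotone, the lower bound need only be proved for a single open interval; and since $\limsup_r v_r^{-1}\log(x_r+y_r)=\max(\limsup_r v_r^{-1}\log x_r,\,\limsup_r v_r^{-1}\log y_r)$, the upper bound need only be checked for a closed set covered by two half-lines.

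For the upper bound, I would fix a closed set $F$, set $\alpha=\inf_F I$ (there is nothing to prove if $\alpha=0$), and introduce $b_-=\sup(F\cap\intervalleoo{-\infty}{m})$ and $b_+=\inf(F\cap\intervallefo{m}{\infty})$, discarding whichever piece of $F$ is empty. Since $F$ is closed, $b_-\in F\cap\intervalleof{-\infty}{m}$ and $b_+\in F\cap\intervallefo{m}{\infty}$, so $I_-(b_-)\geqslant I(b_-)\geqslant\alpha$ and $I_+(b_+)\geqslant I(b_+)\geqslant\alpha$ (using $I_-(m)\geqslant I(m)$ in the boundary case). As $F\subseteq\intervalleof{-\infty}{b_-}\cup\intervallefo{b_+}{\infty}$, one gets $\Prob(Y_r\in F)\leqslant\Prob(Y_r\leqslant b_-)+\Prob(Y_r\geqslant b_+)$, and the hypotheses together with the principle of the largest term give $\limsup_r v_r^{-1}\log\Prob(Y_r\in F)\leqslant-\min\{I_-(b_-),I_+(b_+)\}\leqslant-\alpha$. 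This half uses only the monotonicity of $I_-$ and $I_+$.

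For the lower bound, I would fix a nonempty open interval $G$ and, for $\varepsilon>0$, a point $a\in G$ with $I(a)\leqslant\inf_G I+\varepsilon$ (one may assume $I(a)<\infty$, else there is nothing to prove), and aim at $\liminf_r v_r^{-1}\log\Prob(Y_r\in G)\geqslant-I(a)$. If $a<m$, pick $\delta>0$ and $b\in(a,m)$ with $\intervalleof{a-\delta}{b}\subseteq G$, so that $\Prob(Y_r\in G)\geqslant\Prob(Y_r\leqslant b)-\Prob(Y_r\leqslant a-\delta)$; invoking the hypotheses and the principle of the largest term, then letting $b\downarrow a$, one concludes $\liminf_r v_r^{-1}\log\Prob(Y_r\in G)\geqslant-I_-(a)=-I(a)$. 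The case $a>m$ is symmetric (with $\Prob(Y_r\geqslant b)-\Prob(Y_r\geqslant a+\delta)$ and $I_+$), and the case $a=m$ follows from $\Prob(Y_r\in\intervalleoo{m-\delta}{m+\delta})\geqslant1-\Prob(Y_r\leqslant m-\delta)-\Prob(Y_r\geqslant m+\delta)$, the two negative terms being controlled by $I_-(m-\delta)$ and $I_+(m+\delta)$ and the degenerate sub-case where one of these rates vanishes being absorbed into the $a<m$ or $a>m$ arguments. Sending $\varepsilon\to0$ and letting $G$ range over a base of open intervals finishes the lower bound; combined with the upper bound, this is exactly the claimed LDP.

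The step I expect to be the real obstacle is the control of the difference $\Prob(Y_r\leqslant b)-\Prob(Y_r\leqslant a-\delta)$ (and its right-hand analogue) in the lower bound: to conclude via the principle of the largest term one must know the subtracted tail is exponentially negligible on the scale $v_r$, i.e.\ that $I_-(a-\delta)>I_-(b)$, which is precisely where the strict monotonicity of $I_-$ (resp.\ $I_+$) on the interior of its effective domain enters. This holds for all the rate functions produced in this paper — $\Lambda_W^*$, the functions $I^n$ and $I^\infty$, the quadratic $J$, and the explicit rate functions of the very large deviations — so the lemma applies in each case, and in the general statement this is the role played by the monotonicity hypothesis.
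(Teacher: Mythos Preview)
Your proposal is correct and follows essentially the same route as the paper: for the upper bound, cover a closed set by the two half-lines $(-\infty,b_-]\cup[b_+,\infty)$ and apply the principle of the largest term; for the lower bound, bound $\Prob(Y_r\in G)$ below by a difference of one-sided tails and use the strict monotonicity of $I_\pm$ to make the subtracted term exponentially negligible. The paper's version is marginally leaner --- it treats $x=m$ within the case $x\geqslant m$ rather than via a separate sandwich, and it dispenses with your auxiliary point $b$ since $I_-(b)\leqslant I_-(a)$ already gives the bound without sending $b\downarrow a$ --- but the ideas are identical.
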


\begin{proof}[Proof of Lemma \ref{lem:dev2ldp}]
Let $F$ be a closed subset of $\R$. Let us introduce
\[
a \defeq \sup(F \cap \intervalleof{-\infty}{m})
\quad \text{and} \quad
b \defeq \inf(F \cap \intervallefo{m}{\infty}) .
\]
By convention, $\sup(\emptyset) = -\infty$, $\inf(\emptyset) = \infty$, $I_-(-\infty) = I_+(\infty) = \infty$. Then, 
\begin{align*}
\limsup_{r \to \infty} \frac{1}{v_r} \log \Prob(Y_r \in F)
 & \leqslant \limsup_{r \to \infty} \frac{1}{v_r} \log \Prob(Y_r \in \intervalleof{-\infty}{a} \cup \intervallefo{b}{\infty})\\
 & \leqslant -\min \{I_-(a),I_+(b)\}\\
 & = -\inf_{x \in F} I(x),
\end{align*}
using the fact that $I_-$ is decreasing over $\intervalleof{-\infty}{m}$ and $I_+$ is increasing over $\intervallefo{m}{\infty}$. Now, let $G$ be an open subset of $\R$. Let $x \in \R$ and $\varepsilon > 0$ be such that $\intervalleoo{x-\varepsilon}{x+\varepsilon} \subset G$. Assume that $x \geqslant m$ (the case $x \leqslant m$ is treated analogously). Now,
\begin{align*}
\liminf_{r \to \infty} \frac{1}{v_r} \log \Prob(Y_r \in G)
 & \geqslant \liminf_{r \to \infty} \frac{1}{v_r} \log \Prob(Y_r \in \intervalleoo{x-\varepsilon}{x+\varepsilon}) \\
 & \geqslant \liminf_{r \to \infty} \frac{1}{v_r} \log \Prob(Y_r \in \intervallefo{x}{x+\varepsilon})\\
 & \geqslant \liminf_{r \to \infty} \frac{1}{v_r} \log \bigl(\Prob(Y_r \geqslant x)- \Prob(Y_r \geqslant x+\varepsilon)\bigr)\\
 & = -I_+(x)
\end{align*}
since
\[
\lim_{r \to \infty} \frac{1}{v_r} \log \Prob(Y_r \geqslant x)
 = -I_+(x)
 > -I_+(x+\varepsilon)
 = \lim_{r \to \infty} \frac{1}{v_r} \log \Prob(Y_r \geqslant x+\varepsilon) . \qedhere
\]
\end{proof}

% The following result is a unilateral version of the contraction principle for the sum of two nonnegative sequences. It is a particular case of \cite[Proposition 6]{FATP2020}.

% \begin{prop}[Unilateral sum-contraction principle] \label{principe-contraction-couple} Let  $(Y_{n,1} ,Y_{n,2})_{n\geqslant 0}$ be a sequence of $(\R_+)^2$-valued random variables such that, for each $n$, $Y_{n,1}$ and $Y_{n,2}$ are independent. Let $(v_n)_{n\geqslant 0}$ be a positive sequence diverging to $\infty$. For all $a \in \R$ and $i \in \{ 1, 2 \}$, assume that
% \[
% \lim_{n\to \infty} \frac{1}{v_n} \log\Prob(Y_{n,i} \geqslant a) \eqdef - I_i(a).
% \]
% Then, for all $c\in\R$, one has
% \[
% \lim_{n\to \infty} \frac{1}{v_n} \log\Prob(Y_{n,1}+Y_{n,2}\geqslant c) = - \inf_{\substack{ a,b \in \R \\ a+b=c}} (I_1(a) + I_2(b)).
% \]
% \end{prop}

\textbf{Acknowledgement}  We gratefully thank Alain Rouault for making us discover this problem and for the fruitful discussions that followed.

%\newpage
\bibliographystyle{abbrv}
\bibliography{biblio_gde_dev_alpha}

\end{document}